\newtheorem{theorem}{Theorem}[section]
\newtheorem{lemma}[theorem]{Lemma}
\newtheorem{prop}[theorem]{Proposition}
\newtheorem*{ntheorem}{Theorem}
\theoremstyle{definition}
\newtheorem{definition}[theorem]{Definition}
\newtheorem{remarks}[theorem]{Remark}
\newtheorem{example}[theorem]{Example}
\numberwithin{equation}{section}
\title[$\infty$-adic and $v$-adic Multiple zeta functions]{On $\infty$-adic and $v$-adic multiple zeta functions in positive characteristic}
\author{Daichi Matsuzuki}
\email{m19044h@math.nagoya-u.ac.jp}
\address{Graduate School of Mathematics, Nagoya University, 
Furo-cho, Chikusa-ku, Nagoya, 464-8602, Japan}
\date{\today}
\begin{document}
\maketitle
\begin{abstract}
This paper pursues positive characteristic analogues of the results of Furusho, Komori, Matsumoto and Tsumura on $p$-adic multiple $L$-functions. We consider $\infty$-adic and $v$-adic multiple zeta functions concerned by Angl\`{e}s, Ngo Dac and Tavares Ribeiro.
     Our main results in this paper consist of: (1) integral expressions of special values of $\infty$-adic multiple zeta functions, (2) integral expression of $v$-adic multiple zeta functions themselves similar to those of $p$-adic multiple $L$-functions, (3) Kummer-type congruence for the special values of $v$-adic multiple zeta functions at integers (4) relationships between special values of $\infty$-adic and those of $v$-adic multipe zeta functions at negative integers and (5) orthognal properties of multiple zeta functions and multiple zeta star functions.
\end{abstract}
\tableofcontents
\section{Introduction}
In the present paper, we consider positive characteristic analogues of $p$-adic multiple $L$-functions (\cite{Furusho2017b}).

\subsection{$p$-adic multipe $L$-functions in characteristic $0$}
Kubota and Leopoldt's $p$-adic $L$-functions are defined to be the $p$-adic continuous functions on $\mathbb{Z}_p$ interpolating certain complex analytic $L$-values at negative integers. It is known that $p$-adic $L$-functions can be written as $p$-adic integrations and are related tightly to ideal class groups of a certain kind of number fields
(cf. \cite{LangBook}).
In \cite{Furusho2017b}, Furusho, Komori, Matsumoto and Tsumura constructed $p$-adic versions of multiple zeta functions called $p$-adic multiple $L$-functions.  Their method of construction is a generalization of Koblitz's integral expressions of $p$-adic $L$-functions. Their $p$-adic multiple $L$-functions are  $p$-adic multivariable functions which interpolate the special values of complex multiple zeta ($L$-) functions at negative integers in a sense. They described the special values of $p$-adic multiple $L$-functions and obtain multiple Kummer congruence among them (\cite[Theorems 2.1 and 2.10]{Furusho2017b}).

\subsection{Positive characteristic analogues}
In positive characteristic case, Goss constructed the $v$-adic zeta function (\cite{Goss1979}), which interpolates the special values of Carlitz-Goss zeta function at negative integers $v$-adically ($v$ is a finite place of the rational function field over a finite field). This function is considered to be a positive characteristic analogue of the $p$-adic $L$-functions. Goss obtained the integral expression of $v$-adic zeta function similar to those of $p$-adic $L$-functions (\cite[Theorem 7.6, Addendum Theorem 2.1]{Goss1979}). It was shown that his $v$-adic zeta function plays an important role in a positive characteristic analogue of Iwasawa theory (\cite{Angles2020a}).

Angl\`{e}s, Ngo Dac and Tavares Ribeiro introduced the multiple generalizations of Carlitz-Goss zeta functions, which can be seen as positive characteristic analogues of multiple zeta functions, by following ideas of Goss and Thakur (\cite[Corollary 6.1]{Angles2016}). They also introduced multiple versions of Goss' $v$-adic zeta function. These $v$-adic functions can be considered as positive characteristic analogues of $p$-adic multiple $L$-functions. (Though Angl\`{e}s et al. considered their functions in quite generalized setting, we only deal with the simplest ones of their functions.)

\subsection{Results}

Let us take a power $q$ of a prime number $p$. The polynomial ring $\mathbb{F}_q[\theta]$ and the field of rational function field $\mathbb{F}_q(\theta)$ over the finite field are denoted by $A$ and $k$, respectively. We call $\infty$-adic muliple zeta functions (abbreviated as $\infty$MZFs, here $\infty$ is the place corresponding to $1/\theta \in k$) for the multivariable functions introduced in \cite[Corollary 6.1]{Angles2016} since they can be seen as positive characteristic analogues of multiple zeta functions, and denote them by $\zeta_\infty(s_1,\,\dots,\,s_r)$, see Definition \ref{DefMZF} for details. 
We obtain integral expression of the special values of $\infty$MZFs:
\begin{ntheorem}[{Theorem \ref{IntegralExpression}}]
    There exists $A_v$-valued measure $\mu$ on $A_v^r$ such that the special value of the $\infty$MZF at $(-m_1,\,\dots,-m_r)$ has the following integral expression for $m_1,\,\dots,m_r \in \mathbb{N}$:
    \begin{equation}
 \zeta_{\infty}(-m_1,\,\dots,-m_r)=\int_{A_v^r}x_1^{m_1}\cdots x_r^{m_r} \mu \in A.
\end{equation}
\end{ntheorem}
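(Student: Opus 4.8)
The plan is to realize $\mu$ as a regularized counting measure supported on the ordered $r$-tuples of monic polynomials, exploiting two characteristic-$p$ vanishing phenomena: that of the Carlitz power sums, and that of the number of monic polynomials of a given degree in a fixed residue class. First I would unwind the special value. Writing $A_+$ for the set of monic polynomials and $S_d(m) := \sum_{a \in A_+,\, \deg a = d} a^m$ for the degree-$d$ power sum, grouping the defining series of $\zeta_\infty$ according to the degrees $d_i = \deg a_i$ yields
\begin{equation}
    \zeta_\infty(-m_1,\dots,-m_r) = \sum_{d_1 > \dots > d_r \geq 0} S_{d_1}(m_1)\cdots S_{d_r}(m_r).
\end{equation}
Since $S_d(m) = 0$ once $d$ exceeds a bound depending only on $m$, this is a finite sum lying in $A$, and the theorem reduces to producing a single measure, independent of $(m_1,\dots,m_r)$, whose mixed moments recover these products of power sums.

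Next I would define $\mu$ on the compact open subsets of $A_v^r$. Regarding the finite place $v$ as a monic irreducible polynomial of degree $\delta := \deg v$, I would assign to a product of balls $B = \prod_{i=1}^r (c_i + v^{n_i} A_v)$ the value $\mu(B) \in \mathbb{F}_p \subset A_v$ obtained by counting, degree by degree and modulo $p$, the tuples $(a_1,\dots,a_r) \in A_+^r$ with $\deg a_1 > \dots > \deg a_r$ and $a_i \equiv c_i \pmod{v^{n_i}}$. The engine of the argument is an elementary counting lemma: for $d \geq n\delta$ the reduction map from the degree-$d$ monic polynomials to $A/v^n$ is onto with all fibers of size $q^{d-n\delta}$, so the number of such polynomials lying in a single ball $c_i + v^{n_i} A_v$ vanishes modulo $p$ as soon as $d > n_i\delta$. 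This forces only finitely many degree-tuples to contribute to $\mu(B)$, bounds $|\mu(B)|_v$ by $1$, and, because the discarded high-degree blocks already vanish modulo $p$, makes the block-wise count additive under refinement of balls, giving coherence. Thus $\mu$ extends to a bounded $A_v$-valued measure, which for $r=1$ recovers Goss's single-variable construction.

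Finally I would compute the moments. Since $\mu$ is the regularized counting measure on the ordered monic tuples and each coordinate satisfies $|a_i|_v \leq 1$, a nonarchimedean Riemann-sum estimate (the representative of a ball of radius $v^n$ agrees with any point it contains to order $n$, while the tail over degrees beyond the power-sum bound vanishes outright) shows that integrating the continuous function $x_1^{m_1}\cdots x_r^{m_r}$ against $\mu$ returns the degree-regularized sum $\sum_{(a_1,\dots,a_r)} a_1^{m_1}\cdots a_r^{m_r}$. By the grouping above this is exactly $\sum_{d_1 > \dots > d_r} S_{d_1}(m_1)\cdots S_{d_r}(m_r) = \zeta_\infty(-m_1,\dots,-m_r)$, as required.

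I expect the main obstacle to be the well-definedness of $\mu$ rather than the moment computation. Concretely, one must verify that the degree-graded, mod-$p$ counting is genuinely coherent under passage to finer balls and insensitive to the auxiliary truncation level, and that imposing the ordering constraint $\deg a_1 > \dots > \deg a_r$ does not spoil this additivity. The counting lemma is precisely the device that makes all of this work: it trades the $\infty$-adic degree filtration against the $v$-adic ball structure through the divisibility of the fiber sizes $q^{d-n\delta}$ by $p$, a mechanism with no direct counterpart in the characteristic-$0$ theory of Furusho, Komori, Matsumoto and Tsumura.
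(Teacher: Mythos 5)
Your proposal is correct, and it builds exactly the same object as the paper: for $\sigma_1=\cdots=\sigma_r=1$ the measure of Definition \ref{Def multiple zeta measure} is precisely your regularized (mod $p$) count of ordered monic tuples in a product of balls, your fiber-counting lemma (fibers of size $q^{d-n\delta}$ over $A/v^n$, hence count divisible by $p$ once $d>n_i\delta$) is the same mechanism the paper uses to kill the degree tuples with $i_1>de$, and the additivity-under-refinement verification is the same as the paper's. Where you genuinely diverge is the moment computation. The paper evaluates $\mu$ on balls explicitly (its Proposition: measure $2$ on balls whose representatives are monic with strictly decreasing degrees, measure $1$ on certain ``spurious'' balls where the first coordinate fails to be monic or fails the degree ordering), so its Riemann sum is \emph{not} the desired sum on the nose; it equals the desired sum plus $\bigl(\sum_{f\in\mathbb{F}_q^\times}f^{m_1}\bigr)\zeta_\infty(-m_1)\zeta_\infty(-m_2,\dots,-m_r)$, and the paper must invoke the trivial-zero fact ($\zeta_\infty(-m)=0$ when $(q-1)\mid m$, $m>0$) together with the character-sum vanishing to cancel it. Your route instead compares the Riemann sum at level $n$ directly with the regularized tuple sum: for each ball and each degree tuple the block $\sum_{a\in B,\,\deg a=\mathbf{d}}\bigl(f(\alpha_B)-f(a)\bigr)$ lies in $\mathfrak{m}_v^n$, and only finitely many blocks are nonzero because both the mod-$p$ counts and the congruence-restricted weighted sums $\sum_{a_i\equiv c_i,\ \deg a_i=d_i}a_i^{m_i}$ vanish for large $d_i$ (the latter by expanding $a_i=c_i'+v^nh$ and applying Lemma \ref{firstlemma} to $S_{d_i-n\delta}(-j)$, which is the precise content of your ``tail vanishes outright''). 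This gives the identity modulo $\mathfrak{m}_v^n$ for every $n$, hence exactly, with no appeal to trivial zeros; indeed your argument re-proves that cancellation as a byproduct. The trade-off: the paper's explicit ball-measure formula is reused later (e.g.\ for the $v$-adic Theorem \ref{IntegralexpressionofStar}), while your congruence argument is more self-contained but leaves $\mu(B)$ uncomputed. Do make the within-ball vanishing bound explicit when you write this up ($d_i> n\delta+m_i$ rather than the unrestricted bound $d_i>m_i$), since that refined statement is what legitimizes exchanging the sums over balls and over degree tuples.
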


Here, we fix an irreducible monic polynomial $v \in A$ and write $A_v$ for the completion of $A$ at the place corresponding to $v$ with maximal ideal $\mathfrak{m}_v$. The field of fraction of $A_v$ is denoted by $k_v$. 
Also in the $v$-adic situation, we call $v$-adic muliple zeta functions (abbreviated as $v$MZFs) for the multivariable functions introduced in \cite[Corollary 6.2]{Angles2016} since they can be seen as positive characteristic analogues of $p$-adic multiple $L$-functions, and denote them by $\zeta_v(s_1,\,\dots,\,s_r)$. These are $k_v$-valued multivariable functions on the set $X_v^r$ with $X_v\coloneqq k_v^\times \times {\mathbb{Z}_{d,\,p}}= k_v^\times \times \mathbb{Z}/(q^{\deg v}-1) \times \mathbb{Z}_p$, see Definition \ref{Def$v$MZF} for details. 

While in the case of $v$MZFs not only the special values but also the functions themselves admit integral expression:

\begin{ntheorem}[{Theorem \ref{IntegralexpressionofStar}}]
For $t_1,\,\dots,\,t_r \in {\mathbb{Z}_{d,\,p}}$ and $\sigma_1,\,\dots,\,\sigma_r \in k_v  \setminus \mathfrak{m}_v$, there exists $A_v$-valued measure $\mu^{\sigma_1,\,\dots,\,\sigma_r}$ on $A_v^r$ such that the value of the $v$MZF at $\big((\sigma_1,\, -t_1),$ $\dots,$ $(\sigma_r,\,-t_r) \big)\in X_v^r$ has the following integral expression:
\begin{equation}
\zeta_v((\sigma_1,\, -t_1),\,\dots, (\sigma_r,\,-t_r))=\int_{(A_v  \setminus \mathfrak{m}_v)^r}x_1^{t_1}\cdots x_r^{t_r} \mu^{\sigma_1,\,\dots,\,\sigma_r} \in k_v.
\end{equation}
(The measure $\mu^{1,\,\dots,\,1}$ coincides with $\mu$ in the $\infty$-adic situation.)
\end{ntheorem}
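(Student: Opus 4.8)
The plan is to realize the value of the $v$MZF as a moment of an explicitly constructed measure, following the Koblitz--Goss strategy carried over to the multiple setting. Recall from Definition \ref{Def$v$MZF} that $\zeta_v((\sigma_1,-t_1),\dots,(\sigma_r,-t_r))$ is (the $v$-adic limit of) a sum indexed by $r$-tuples of monic polynomials $a_1,\dots,a_r\in A$ coprime to $v$, subject to a degree-ordering condition, in which the $k_v^\times$-component $\sigma_i$ and the $\mathbb{Z}_{d,p}$-component $t_i$ enter through the $v$-adic power $a_i\mapsto a_i^{(\sigma_i,-t_i)}$. Since every compact open subset of $(A_v\setminus\mathfrak{m}_v)^r$ is a finite disjoint union of boxes $\prod_{i=1}^r(b_i+\mathfrak{m}_v^{n_i})$ with $b_i\in A_v\setminus\mathfrak{m}_v$, it suffices to prescribe $\mu^{\sigma_1,\dots,\sigma_r}$ on such boxes. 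I would set its value on a box equal to the corresponding partial sum of the defining series, taken over the tuples with $a_i\equiv b_i\pmod{\mathfrak{m}_v^{n_i}}$ and with the $t_i$-dependent factor stripped off; it is precisely this stripped factor that the integrand $x_1^{t_1}\cdots x_r^{t_r}$ reinstates upon integration.

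First I would verify that this prescription defines a genuine bounded $A_v$-valued measure. The distribution (coherence) relation --- that the value on a box equals the sum of the values on the sub-boxes of any finite refinement --- reduces, once the degree-ordering is held fixed, to the elementary additivity of the defining terms over residue classes modulo powers of $v$. Boundedness, namely that the values lie in $A_v$ with uniformly bounded absolute value, is where the arithmetic of positive characteristic enters: here I would invoke the vanishing of the power sums $\sum_{\deg a=d,\,(a,v)=1}a^{n}=0$ for $d$ large relative to $n$, together with the single-variable $v$-adic estimates of Goss, so that the partial sums stabilize $v$-adically and remain integral.

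The degree-ordering condition couples the $r$ variables, so $\mu^{\sigma_1,\dots,\sigma_r}$ is not a product measure, and this coupling is the main obstacle. I would dispose of it by induction on $r$: carrying out the innermost summation over $a_r$ produces a single-variable functional whose dependence on the remaining variables again has the shape treated at the previous stage, which lets coherence and boundedness propagate. It is here that the multiple zeta \emph{star} function is convenient, since its non-strict ordering renders the iterated structure most transparent; accordingly I expect to establish the integral expression first for the star function and then recover the stated formula for $\zeta_v$ by the inclusion--exclusion relation between the two. Granting the construction, the moment identity is checked first for $t_i\in\mathbb{N}$ --- where reinstating the stripped factors (the integrand evaluating to $a_i^{t_i}$ on the box centred at $a_i$) rebuilds the defining series term by term --- and then extended to all $t_i\in\mathbb{Z}_{d,p}$ by continuity, both sides being continuous in $t$ and $\mathbb{N}$ being dense in $\mathbb{Z}_{d,p}$.

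Finally, for the compatibility assertion I would compare $\mu^{1,\dots,1}$ with the $\infty$-adic measure $\mu$ of Theorem \ref{IntegralExpression}. By the construction of the $v$MZF as the $v$-adic interpolation of the $\infty$MZF, the two families of special values agree, $\zeta_\infty(-m_1,\dots,-m_r)=\zeta_v((1,-m_1),\dots,(1,-m_r))$ for all $(m_1,\dots,m_r)\in\mathbb{N}^r$, so that $\mu$ and $\mu^{1,\dots,1}$ share all moments against the monomials $x_1^{m_1}\cdots x_r^{m_r}$ (after checking that $\mu$ is in fact supported on $(A_v\setminus\mathfrak{m}_v)^r$). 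Since a bounded $A_v$-valued measure is determined by these moments --- polynomial functions being dense in the continuous functions by the Carlitz--Wagner analogue of Mahler's theorem --- the two measures coincide.
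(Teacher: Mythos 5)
Your core construction is the same as the paper's (Definition \ref{Def multiple zeta measure}): the measure of a box is the $\sigma$-weighted count of tuples of monic polynomials in the prescribed residue classes with strictly decreasing degrees, the distribution relation is checked by refining one coordinate at a time, and integrality/boundedness comes from the characteristic-$p$ vanishing of the counts for large degrees. However, your route to the stated (non-star) identity has a genuine gap: you propose to prove the moment formula for $\zeta_v^\star$ first and then ``recover the stated formula for $\zeta_v$ by the inclusion--exclusion relation between the two.'' In this function-field setting no such relation exists. Inclusion--exclusion between weak and strict orderings produces diagonal terms such as $\sum_{j}\sigma_1^{-j}\sigma_2^{-j}\tilde{S}_j(t_1)\tilde{S}_j(t_2)$, and unlike characteristic $0$ (where $n^{-s_1}n^{-s_2}=n^{-(s_1+s_2)}$ merges arguments) one has $\tilde{S}_j(t_1)\tilde{S}_j(t_2)\neq\tilde{S}_j(t)$ for any single argument $t$: the diagonal sum runs over \emph{pairs} of same-degree monic polynomials and is not a value of any $v$MZF or $v$MZSF. (Thakur's product formula for $S_d(a)S_d(b)$ exists only for positive integer arguments and is itself a deep result, not available for $t\in\mathbb{Z}_{d,\,p}$ and general $\sigma$.) The paper avoids this entirely by treating $\mu$ and $\mu^\star$ in parallel: it computes each measure explicitly on boxes (only finitely many degree tuples contribute, since counts with $i_1>de$ are divisible by $q$), and then the Riemann sum equals the truncated defining series plus correction terms coming from the degree-$de$ tuples $\alpha_1+v^e$; these are killed because $\sum_{f\in\mathbb{F}_q^\times}f^{t}=0$ when $(q-1)\nmid t$ and by the trivial zeros when $(q-1)\mid t$. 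Your phrase ``rebuilds the defining series term by term'' silently skips exactly these correction terms, which are the only delicate point of the direct computation.

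Your argument for the compatibility assertion is also incorrect as stated. The claimed equality $\zeta_\infty(-m_1,\dots,-m_r)=\zeta_v((1,-m_1),\dots,(1,-m_r))$ is false: already in depth one, $\zeta_v(-m)=(1-v^{m})\zeta_\infty(-m)$, because the $v$MZF omits the terms divisible by $v$. Likewise $\mu$ is \emph{not} supported on $(A_v\setminus\mathfrak{m}_v)^r$: a box whose first coordinate is $0+\mathfrak{m}_v^e$ has nonzero mass, coming from $n_1=\alpha_1+v^e$ (case (2) of the proposition computing $\mu$). So the two families of moments you want to compare genuinely differ, and the measure-determined-by-moments argument collapses. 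In the paper nothing needs to be proved here: Theorems \ref{IntegralExpression} and \ref{IntegralexpressionofStar} use the \emph{same} measure from Definition \ref{Def multiple zeta measure}, and the different values arise solely from integrating over $A_v^r$ versus $(A_v\setminus\mathfrak{m}_v)^r$. The correct repair for your setup is elementary: on any box inside $(A_v\setminus\mathfrak{m}_v)^r$ the coprimality condition on the $n_j$ is automatic, so your stripped partial sums coincide with the paper's counts box by box, and the two measures agree by definition rather than by a moment comparison.
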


These integral expressions imply Kummer-type congruence between special values of $v$MZFs:
\begin{ntheorem}[{Theorem \ref{KummerType}}]
If we take indices $(m_1,\,\dots,\,m_r)$ and $(l_1,\,\dots,\,l_r) \in \mathbb{Z}^r$ such that $m_i \equiv l_i  \not \equiv 0 \mod (q^{\deg v}-1)q^{e\deg v}$ for all $1\leq i \leq r$ and some $e \in \mathbb{N}$, then both $\zeta_v(m_1,\,\dots,\,m_r)$ and $\zeta_v(l_1,\,\dots,\,l_r)$ are in $A_v$ and we have the following congruence between the special values of a $v$MZF at  $(m_1,\,\dots,\,m_r)$ and $(l_1,\,\dots,\,l_r)$:
\begin{equation}
\zeta_v(m_1,\,\dots,\,m_r) \equiv \zeta_v(l_1,\,\dots,\,l_r) \mod \mathfrak{m}_v^r.
\end{equation}
\end{ntheorem}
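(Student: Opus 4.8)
The plan is to derive the congruence directly from the integral expression of Theorem~\ref{IntegralexpressionofStar}, so that the statement about $v$MZF values is reduced to a pointwise congruence between the two integrand functions $x_1^{m_1}\cdots x_r^{m_r}$ and $x_1^{l_1}\cdots x_r^{l_r}$ on the domain of integration $(A_v\setminus\mathfrak{m}_v)^r$. Once this pointwise congruence is in hand, the $A_v$-integrality of the measure transports it to the values themselves.

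First I would realize the integer special values as integrals. Under the hypothesis $m_i\not\equiv 0 \pmod{(q^{\deg v}-1)q^{e\deg v}}$, the integer tuple $(m_1,\dots,m_r)$ should correspond to a point $\big((\sigma_1,-t_1),\dots,(\sigma_r,-t_r)\big)\in X_v^r$ lying in the range to which Theorem~\ref{IntegralexpressionofStar} applies, with exponents $t_i$ read off from the $m_i$ via the natural map $\mathbb{Z}\to\mathbb{Z}_{d,p}$; likewise for $(l_1,\dots,l_r)$. Because on $(A_v\setminus\mathfrak{m}_v)^r$ the integrand $x_1^{t_1}\cdots x_r^{t_r}$ takes values in $A_v^\times\subseteq A_v$ while $\mu^{\sigma_1,\dots,\sigma_r}$ is $A_v$-valued, the resulting integral lies in $A_v$, which gives the asserted $v$-integrality of both $\zeta_v(m_1,\dots,m_r)$ and $\zeta_v(l_1,\dots,l_r)$. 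Pinning down exactly which $(\sigma_i,t_i)$ an integer corresponds to, and checking that the nonvanishing hypothesis places the points in the valid domain and keeps the values integral, is where I expect most of the care to be needed; I regard this translation as the main obstacle, the congruence mechanism itself being comparatively clean.

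That mechanism rests on a Frobenius phenomenon special to characteristic $p$. For a unit $x\in A_v\setminus\mathfrak{m}_v$ its reduction lies in $\mathbb{F}_{q^{\deg v}}^\times$, hence $x^{q^{\deg v}-1}=1+y$ with $y\in\mathfrak{m}_v$, and therefore
\[
x^{(q^{\deg v}-1)q^{e\deg v}}=(1+y)^{q^{e\deg v}}=1+y^{q^{e\deg v}}\equiv 1 \pmod{\mathfrak{m}_v^{q^{e\deg v}}},
\]
since $\mathrm{ord}_v\!\big(y^{q^{e\deg v}}\big)=q^{e\deg v}\,\mathrm{ord}_v(y)\ge q^{e\deg v}$. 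Consequently, whenever $m\equiv l\pmod{(q^{\deg v}-1)q^{e\deg v}}$ one writes $m=l+c(q^{\deg v}-1)q^{e\deg v}$ and obtains $x^m\equiv x^l\pmod{\mathfrak{m}_v^{q^{e\deg v}}}$; by continuity of $t\mapsto x^t$ the same congruence holds for $\mathbb{Z}_{d,p}$-exponents sharing a common image modulo $(q^{\deg v}-1)q^{e\deg v}$.

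Finally I would assemble the pieces. A telescoping identity shows that if $x_i^{m_i}\equiv x_i^{l_i}\pmod{\mathfrak{m}_v^{N}}$ for each $i$ with $N=q^{e\deg v}$, then $x_1^{m_1}\cdots x_r^{m_r}\equiv x_1^{l_1}\cdots x_r^{l_r}\pmod{\mathfrak{m}_v^{N}}$, so the difference of the two integrands is an $A_v$-valued function taking values in $\mathfrak{m}_v^{N}$. Integrating this difference against $\mu^{\sigma_1,\dots,\sigma_r}$ and approximating by Riemann sums, each summand of which lies in $\mathfrak{m}_v^{N}$, keeps the value in $\mathfrak{m}_v^{N}$, whence $\zeta_v(m_1,\dots,m_r)\equiv\zeta_v(l_1,\dots,l_r)\pmod{\mathfrak{m}_v^{q^{e\deg v}}}$. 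Since the depth $e$ of congruence may be taken large enough that $q^{e\deg v}\ge r$ — indeed the characteristic-$p$ Frobenius delivers this much stronger modulus for free — this in particular establishes the claimed congruence modulo $\mathfrak{m}_v^{r}$.
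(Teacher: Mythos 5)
Your proposal follows the same route as the paper's own proof: realize the integer special values through the integral expression of Theorem~\ref{IntegralexpressionofStar} (with $\sigma_i=1$ and $t_i=-m_i$ under the natural map $\mathbb{Z}\to\mathbb{Z}_{d,\,p}$), reduce everything to a pointwise congruence between the monomial integrands on $(A_v\setminus\mathfrak{m}_v)^r$, and then use the $A_v$-valuedness of the measure to transport that congruence to the integrals (every Riemann sum of an $\mathfrak{m}_v^N$-valued function against an $A_v$-valued measure lies in the closed ideal $\mathfrak{m}_v^N$); your unit-valued-integrand argument for $\zeta_v(m_1,\,\dots,\,m_r)\in A_v$ is also the intended one, and note that neither you nor the paper ever actually uses the hypothesis $m_i\not\equiv 0$. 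The one genuine difference is how the pointwise congruence is proved. The paper's Lemma~\ref{Fermat type congruences} deduces $x^m\equiv x^l \bmod \mathfrak{m}_v^e$ from $m\equiv l \bmod (q^d-1)q^{(e-1)d}$ by Lagrange's theorem in $(A_v/\mathfrak{m}_v^e)^\times$, a group of order $(q^d-1)q^{(e-1)d}$. You instead use the characteristic-$p$ Frobenius: $x^{q^d-1}=1+y$ with $y\in\mathfrak{m}_v$, hence $x^{(q^d-1)q^{ed}}=1+y^{q^{ed}}\equiv 1 \bmod \mathfrak{m}_v^{q^{ed}}$. This is correct and strictly sharper: from $m_i\equiv l_i \bmod (q^d-1)q^{ed}$ you obtain the values congruent modulo $\mathfrak{m}_v^{q^{ed}}$, where the paper's lemma yields only modulo $\mathfrak{m}_v^{e+1}$.

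The slip is in your final sentence: you assert that ``the depth $e$ of congruence may be taken large enough that $q^{e\deg v}\geq r$,'' but $e$ is supplied by the hypothesis, not chosen by you, and nothing forces $q^{ed}\geq r$ (take $q=2$, $d=\deg v=1$, $e=1$, $r=4$: your argument gives congruence modulo $\mathfrak{m}_v^2$ only, short of the claimed $\mathfrak{m}_v^4$). So, as a proof of the literal statement quoted above, there is a quantifier gap. You are, however, in good company: the precise version the paper actually proves (Theorem~\ref{KummerType} in the body) concludes congruence modulo $\mathfrak{m}_v^e$ under index congruence modulo $(q^d-1)q^{(e-1)d}$ --- that is, modulo $\mathfrak{m}_v^{e+1}$ under the hypothesis as quoted --- and this yields the stated modulus $\mathfrak{m}_v^r$ only when $e+1\geq r$. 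Your Frobenius bound closes the gap in strictly more cases (whenever $q^{ed}\geq r$, and $q^{ed}\geq e+1$ always holds); in the remaining cases the introduction's formulation does not follow from either argument and should be read as shorthand for the body's $e$-indexed statement, which your proof does establish, with a better modulus.
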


Further, we obtain results on special values of $\infty$MZFs and $v$MZFs at negative integers:

\begin{ntheorem}[{Theorem \ref{recursiveformulae}}]
Special values of $\infty$MZFs and $v$MZFs at negative integers can be written in terms of special values of $\infty$MZFs with simpler arguments.
\end{ntheorem}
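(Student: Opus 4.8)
The plan is to unfold the integral expressions of Theorems \ref{IntegralExpression} and \ref{IntegralexpressionofStar} back into the finite sums that define them and to run the reductions at that level. Writing $S_d(-m) = \sum_{a} a^{m}$ for the power sum over monic $a \in A$ of degree $d$ (with $S_d(-m) = 0$ for $d < 0$), the construction of the measure $\mu$ yields
\begin{equation}
\zeta_\infty(-m_1, \dots, -m_r) = \sum_{d_1 > \dots > d_r \geq 0} S_{d_1}(-m_1) \cdots S_{d_r}(-m_r),
\end{equation}
a \emph{finite} sum because $S_d(-m)$ vanishes once $d$ exceeds a bound depending only on $m$. I would first record this shape, so that every subsequent manipulation takes place among honest elements of $A$.

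For the $\infty$-adic recursion I would peel off the innermost index $d_r$ and compare with the product $\zeta_\infty(-m_1, \dots, -m_{r-1})\,\zeta_\infty(-m_r)$. Since the full single power sum $\sum_{d \geq 0} S_d(-m_r)$ equals $\zeta_\infty(-m_r)$, the truncation $\sum_{0 \leq d_r < d_{r-1}} S_{d_r}(-m_r)$ differs from it by the tail $\sum_{d_r \geq d_{r-1}} S_{d_r}(-m_r)$, whence
\begin{equation}
\zeta_\infty(-m_1, \dots, -m_r) = \zeta_\infty(-m_1, \dots, -m_{r-1})\, \zeta_\infty(-m_r) - \sum_{\substack{d_1 > \dots > d_{r-1} \\ d_r \geq d_{r-1}}} \Big( \prod_{i=1}^{r-1} S_{d_i}(-m_i) \Big) S_{d_r}(-m_r).
\end{equation}
The correction splits into the diagonal part $d_r = d_{r-1}$ and the fork part $d_r > d_{r-1}$. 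For the diagonal I would invoke the explicit product expansion of two equal-degree power sums to rewrite $S_d(-m_{r-1})\,S_d(-m_r)$ as a combination of single power sums, collapsing it into genuine depth-$(r-1)$ values; the fork part I would reorganize by inserting $d_r$ into the descending chain $d_1 > \dots > d_{r-1}$, generating honest depth-$r$ chains together with further equal-degree diagonals, each diagonal collapse strictly lowering the depth so that an induction on $r$ terminates, with base case $r = 1$ immediate.

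For the $v$MZF I would first pass to the $\infty$-adic side. Since $\mu^{1, \dots, 1} = \mu$ and the integral of Theorem \ref{IntegralexpressionofStar} runs over the units $(A_v \setminus \mathfrak{m}_v)^r$, the value $\zeta_v(-m_1, \dots, -m_r)$ is the $v$-coprime analogue of the sum above, with each $S_d(-m)$ replaced by the restricted power sum over monic $a$ of degree $d$ prime to $v$. The identity
\begin{equation}
\sum_{\substack{\deg a = d,\ a\ \mathrm{monic} \\ v \nmid a}} a^{m} = S_d(-m) - v^{m}\, S_{d - \deg v}(-m),
\end{equation}
obtained by substituting $a = v b$ in the terms with $v \mid a$, lets me expand the product over the $r$ variables into an inclusion--exclusion over the subsets of indices divisible by $v$. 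After reindexing the shifted degrees, each resulting term is (a truncation of) an $\infty$MZF value with arguments rescaled by powers of $v$; feeding these into the $\infty$-adic recursion of the previous paragraph expresses $\zeta_v(-m_1, \dots, -m_r)$ through $\infty$MZF special values with simpler arguments.

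The main obstacle is combinatorial and concentrated in two places. First, in the $\infty$-adic step the diagonal collision $d_r = d_{r-1}$ produces a product of equal-degree power sums that is not itself a power sum; its reduction to single power sums---and hence to lower-depth $\infty$MZF values---rests on the explicit power-sum product expansion and is where the real work lies, the fork terms being cleared only at the cost of tracking further diagonals. Second, in the $v$-adic step the degree shifts $d \mapsto d - \deg v$ disturb the strict ordering $d_1 > \dots > d_r$, so the reindexed sums must be shown to reassemble \emph{exactly} into $\infty$MZF values with no residual partial sums. Throughout, one must check that these finite-sum identities are compatible with the $A_v$-valued measures $\mu$ and $\mu^{\sigma_1, \dots, \sigma_r}$ via Fubini, but this requires no tool beyond the explicit descriptions already established.
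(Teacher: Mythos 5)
Your reduction fails at the step you yourself flag as the crux: the diagonal collapse. In this setting $S_d(-m)$ is a sum over all $q^d$ monic polynomials of degree $d$, not a single term, so there is no analogue of the classical stuffle collapse $\sum_n n^{-a}n^{-b}=\zeta(a+b)$. The correct expansion (write $n'=n+\delta$ with $\deg\delta<d$, expand binomially, and evaluate the $\delta$-sums as $\mathbb{F}_q^\times$-character sums) is, for $d\ge1$,
\begin{equation}
S_d(-a)\,S_d(-b)\;=\;-\sum_{\substack{0\le j<b\\ (q-1)\mid (b-j)}}\binom{b}{j}\,\zeta_{<d}(j-b)\,S_d\bigl(-(a+j)\bigr),
\end{equation}
and the coefficients $\zeta_{<d}(j-b)$ depend on the summation variable $d$. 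When you sum over the chain $d_1>\cdots>d_{r-1}$, each factor $\zeta_{<d_{r-1}}(j-m_r)=\sum_{e<d_{r-1}}S_e(j-m_r)$ unfolds into a new summation index, so the diagonal part reassembles into \emph{depth-$r$} values $\zeta_\infty(-m_1,\dots,-m_{r-2},-(m_{r-1}+j),-(m_r-j))$ of the same total weight, not into depth-$(r-1)$ values. Concretely, for $r=2$ your procedure yields (the $-1$ being the $d_1=d_2=0$ term)
\begin{equation}
\zeta_\infty(-m_1,-m_2)+\zeta_\infty(-m_2,-m_1)-\sum_{\substack{0\le j<m_2\\ (q-1)\mid(m_2-j)}}\binom{m_2}{j}\,\zeta_\infty\bigl(-(m_1+j),-(m_2-j)\bigr)=\zeta_\infty(-m_1)\,\zeta_\infty(-m_2)-1,
\end{equation}
which is a harmonic-type \emph{relation} among depth-$2$ values of weight $m_1+m_2$, not an expression of any single one through simpler values; worse, when $(q-1)\mid m_2$ the $j=0$ term cancels $\zeta_\infty(-m_1,-m_2)$ on the left, so the value you are trying to compute disappears from the relation altogether. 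The same phenomenon recurs in every diagonal produced by your fork terms, so the induction on $r$ you invoke never terminates. Your $v$-adic step inherits this gap and adds the one you acknowledge: after substituting $\tilde{S}_i(-m)=S_i(-m)-v^mS_{i-d}(-m)$ and shifting indices, the chains carry mixed constraints such as $i_1>i_2+d$, and such sums are genuinely not $\infty$MZF values.

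The paper's proof runs on an entirely different mechanism: the recursion is in the exponents, not the depth, and the prime $v$ enters the $\infty$-adic statement itself. One splits the chain $i_1>\cdots>i_r\ge0$ at the threshold $d=\deg v$, so that the low part factors out as the finite sum $\zeta_{<d}(-m_{l+1},\dots,-m_r)$; in the high part one performs division with remainder $n_j=vh_j+\alpha_j$ ($h_j$ monic, $\deg\alpha_j<d$), expands $(vh_j+\alpha_j)^{m_j}$ binomially, kills the top terms $a_j=m_j$ via $\sum_{\alpha_j}\alpha_j^0=q^d=0$, and evaluates the $\alpha_j$-sums as $\bigl(\sum_{f\in\mathbb{F}_q^\times}f^{m_j-a_j}\bigr)\zeta_{<d}(a_j-m_j)$. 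The $h_j$-sums then reassemble exactly into $\zeta_\infty(-a_1,\dots,-a_l)$ with $a_j<m_j$ strictly, which is what ``simpler'' means here, and depth reduction comes separately (Remark \ref{reminduction}). Coprimality to $v$ for $\zeta_v$ costs nothing: one simply restricts to $\alpha_j\ne0$, which changes the range to $0\le a_j\le m_j$ (Theorem \ref{recursiveformulaev}); no inclusion--exclusion over $\tilde{S}=S-v^mS_{\cdot-d}$ and no index shifting ever occur. If you want to salvage your plan, abandon the depth-reduction stuffle and redo both parts with the division $n_j=vh_j+\alpha_j$.
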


As a corollary, we obtain the following:
\begin{ntheorem}[{Thoerem \ref{recursiveformulaeCor}}]
Special values of $\infty$MZFs at negative integers can be written in terms of those of $v$MZFs, and vice versa.
\end{ntheorem}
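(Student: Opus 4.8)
The plan is to deduce both implications from the two recursive formulae furnished by Theorem~\ref{recursiveformulae}. That theorem presents, for each $r$-tuple of negative integers $(-m_1,\dots,-m_r)$, the special value $\zeta_v(-m_1,\dots,-m_r)$ as a finite $k$-linear combination of $\infty$MZF special values; consequently the implication ``$v$MZF values in terms of $\infty$MZF values'' is already contained in that statement and requires nothing further. The substance of the corollary is therefore the reverse implication, expressing each $\zeta_\infty(-m_1,\dots,-m_r)$ through $v$MZF values, and this I would obtain by inverting the relation supplied by the theorem. Concretely, I would introduce the two $k$-vector spaces $V_\infty$ and $V_v$ spanned by all special values at negative integers of $\infty$MZFs and of $v$MZFs respectively; the theorem gives $V_v\subseteq V_\infty$, and the goal is the reverse inclusion $V_\infty\subseteq V_v$, which yields equality and hence the asserted mutual expressibility.

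To establish $V_\infty\subseteq V_v$ I would first pin down the precise shape of the relation coming from the removal of the Euler factor at $v$. Writing out the defining power sums and performing inclusion--exclusion on divisibility by $v$ in each coordinate, I expect the identity to take the form
\begin{equation}
\zeta_v(-m_1,\dots,-m_r)=c_{\mathbf m}\,\zeta_\infty(-m_1,\dots,-m_r)+\big(\text{a $k$-linear combination of strictly simpler $\infty$MZF values}\big),
\end{equation}
where the leading coefficient is $c_{\mathbf m}=\prod_{i=1}^{r}(1-v^{m_i})$. The decisive point is that $c_{\mathbf m}$ is a nonzero element of $k$: since $v$ is monic of degree $\ge 1$ and each $m_i\ge 1$, every factor $1-v^{m_i}$ has positive degree and therefore does not vanish, so $c_{\mathbf m}$ is a unit in $k$. (I have checked that the coefficient of the top term is indeed $(1-v^{m_1})$ in depth one and $(1-v^{m_1})(1-v^{m_2})$ in depth two, the remaining degree-band contributions being of strictly smaller complexity.)

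The inversion is then carried out by induction on the complexity of the argument. For depth one the relation reduces to $\zeta_v(-m)=(1-v^{m})\,\zeta_\infty(-m)$, whence $\zeta_\infty(-m)=(1-v^{m})^{-1}\zeta_v(-m)\in V_v$, settling the base case. For the inductive step I rewrite the displayed identity as
\begin{equation}
\zeta_\infty(-m_1,\dots,-m_r)=c_{\mathbf m}^{-1}\Big(\zeta_v(-m_1,\dots,-m_r)-\big(\text{strictly simpler $\infty$MZF values}\big)\Big),
\end{equation}
and observe that $\zeta_v(-m_1,\dots,-m_r)\in V_v$ trivially while every strictly simpler $\infty$MZF value lies in $V_v$ by the induction hypothesis; since the arguments arising from the inclusion--exclusion are again negative integers, the hypothesis applies to them verbatim. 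Hence $\zeta_\infty(-m_1,\dots,-m_r)\in V_v$, completing the induction and the proof.

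The step I expect to be the main obstacle is twofold, and both aspects concern the structure of the recursion rather than the final formal inversion. On the arithmetic side, everything hinges on the nonvanishing of the leading coefficient $c_{\mathbf m}$, which relies on the specific fact that $1-v^{m}\ne 0$ in $A=\mathbb{F}_q[\theta]$ for every $m\ge 1$; this is exactly what makes the division by $c_{\mathbf m}$ legitimate. On the combinatorial side, I must ensure that the well-ordering underlying the induction is genuine: namely that, after re-expressing the degree-band power sums produced by the inclusion--exclusion, every correction term is of strictly smaller complexity than $(-m_1,\dots,-m_r)$ (so that no same-complexity $\infty$MZF value, such as a reordering of the given index, survives on the right-hand side) and that all such arguments remain negative integers, so that the recursive formulae of Theorem~\ref{recursiveformulae} continue to apply without leaving the admissible range.
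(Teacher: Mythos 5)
Your overall route coincides with the paper's: the direction ``$v$MZF values in terms of $\infty$MZF values'' is the explicit identity \eqref{EqInductiveCor} (obtained by comparing the recursive formulae \eqref{inftyInductive} and \eqref{vInductive}), and the converse is obtained by solving that identity for the top term and running a well-founded induction on depth and on the componentwise size of the index --- which is exactly how the paper argues, via the depth-one inversion of \eqref{Dep1interpolate} and the reduction of Remark \ref{reminduction}. The problem is that your explicit form of the relation is false, and the error sits precisely at the step you single out as decisive. The leading coefficient is not $c_{\mathbf m}=\prod_{i=1}^{r}(1-v^{m_i})$. In \eqref{EqInductiveCor} the only term on the right-hand side proportional to $\zeta_\infty(-m_1,\dots,-m_r)$ itself is the $l=r$ term with $a_j=m_j$ for every $j$, whose coefficient is $(-1)^r v^{m_1+\cdots+m_r}$ (recall $\zeta_{<d}(0)=1$), so the true relation reads
\begin{equation}
\zeta_v(-m_1,\dots,-m_r)=\bigl(1+(-1)^{r}v^{m_1+\cdots+m_r}\bigr)\,\zeta_\infty(-m_1,\dots,-m_r)+\bigl(\text{strictly simpler $\infty$MZF values}\bigr).
\end{equation}
In depth two the coefficient is $1+v^{m_1+m_2}$, exactly as in the paper's formula \eqref{dep2ex1}, not $(1-v^{m_1})(1-v^{m_2})$; your claimed depth-two verification is therefore incorrect. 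The product of Euler-type factors fails because the coordinates are coupled by the ordering $\deg n_1>\cdots>\deg n_r$ (multiple zeta functions have no Euler product): in the inclusion--exclusion over divisibility by $v$, the term where only $n_1$ is divisible, say $n_1=vh_1$, is a sum over $\deg h_1+d>\deg n_2$ rather than over $\deg h_1>\deg n_2$, so it contributes only lower-complexity values and not $-v^{m_1}\zeta_\infty(-m_1,\dots,-m_r)$; only the term where \emph{all} $n_j$ are divisible by $v$ reproduces the full index, with sign $(-1)^r$.

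The mistake is local and reparable, because everything you actually need from $c_{\mathbf m}$ survives the correction: $1+(-1)^{r}v^{m_1+\cdots+m_r}$ is a nonzero element of $A$ whenever $m_1+\cdots+m_r\geq 1$ (it is $1$ plus a polynomial of degree $d(m_1+\cdots+m_r)>0$), and the remaining terms in \eqref{EqInductiveCor} are genuinely simpler: of lower depth, or with index componentwise $\leq(m_1,\dots,m_r)$ and strictly smaller in at least one slot, all entries still nonnegative integers. Two edge cases that your write-up glosses over: at $m=0$ the depth-one inversion fails since $1-v^{0}=0$, but $\zeta_\infty(0)=1$ is a constant of $k$ and needs no inversion; and indices with a vanishing entry among $m_1,\dots,m_{r-1}$ satisfy $\zeta_\infty(-m_1,\dots,-m_r)=0$ by Remark \ref{reminduction}, so they do not obstruct the induction. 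With these repairs your argument goes through and coincides with the paper's proof. A smaller quibble: formalizing the statement through the $k$-spans $V_\infty$ and $V_v$ trivializes it, since all values lie in $k$ and any single nonzero value spans $k$ over $k$; the actual content of the theorem, and of your induction, is the explicit recursive expressibility, so that formalization should be dropped in favor of the algorithmic statement.
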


We also consider star variants $\zeta_\infty^\star(s_1,\,\dots,\,s_r)$ and $\zeta_v^\star(s_1,\,\dots,\,s_r)$ of $\infty$MZFs and $v$MZFs in \eqref{EqDefMZSF} and \eqref{EqDefvMZSF}, respectively. In the present paper, we show all the results except for Theorem \ref{recursiveformulaeCor} for star versions in the parallel way to non-star versions. 

Stimulated by the results of  \cite[Theorem 3]{Zlobin2005} and \cite[Lemma 4.2.1]{Chang2019a}, we obtain the following relations between non-star variants and star-variants of $\infty$MZFs and $v$MZFs, which we call the orthogonal property: 
\begin{ntheorem}[{Theorem \ref{orthogonality}}]
For $s_1,\,\dots,\,s_r \in X_\infty$ (see subsection \ref{2.1} for the definition), we have
\begin{equation}
    \sum_{l=0}^{r}(-1)^{l}\zeta_\infty(s_r,\,\dots,\,s_{r-l+1})\zeta_\infty^\star(s_1,\,\dots,\,s_{r-l})=0.
\end{equation}
Similarly, for $s_1,\,\dots,\,s_r \in X_v$, we have
\begin{equation}
    \sum_{l=0}^{r}(-1)^{l}\zeta_v(s_r,\,\dots,\,s_{r-l+1})\zeta_v^\star(s_1,\,\dots,\,s_{r-l})=0.
\end{equation}
\end{ntheorem}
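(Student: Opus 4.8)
The plan is to reduce both identities to a single purely combinatorial fact about strict versus weak orderings of degrees, so that the $\infty$-adic and $v$-adic statements are settled by one and the same argument. Write each of the functions as an iterated sum grouped by the degrees of the summation variables: let $S_d(s)$ denote the degree-$d$ power sum appearing in the defining expansion of Definition \ref{DefMZF} and \eqref{EqDefMZSF} (its precise form is irrelevant to what follows). Then $\zeta_\infty(s_1,\dots,s_r)=\sum_{d_1>\cdots>d_r\ge 0}\prod_{i=1}^r S_{d_i}(s_i)$ and $\zeta_\infty^\star(s_1,\dots,s_r)=\sum_{d_1\ge\cdots\ge d_r\ge 0}\prod_{i=1}^r S_{d_i}(s_i)$, the depth-zero values being $1$; the same holds verbatim with $v$ replacing $\infty$.

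First I would expand the product in the $l$-th summand, assigning to the variable $s_i$ a degree $d_i$. The crucial bookkeeping point is that the arguments of the non-star factor are reversed: the factor $\zeta_\infty^\star(s_1,\dots,s_{r-l})$ contributes the constraint $d_1\ge\cdots\ge d_{r-l}$, whereas $\zeta_\infty(s_r,\dots,s_{r-l+1})$, whose strictly decreasing degrees are read off in reverse order, contributes $d_{r-l+1}<\cdots<d_r$, with no relation imposed between $d_{r-l}$ and $d_{r-l+1}$. Hence, after interchanging the summations, the left-hand side becomes $\sum_{(d_1,\dots,d_r)} c(d_1,\dots,d_r)\prod_i S_{d_i}(s_i)$, where, writing $k=r-l$, one has $c(d_1,\dots,d_r)=\sum_{k=0}^r(-1)^{r-k}P_k(d)$ with $P_k(d)=1$ exactly when $d_1\ge\cdots\ge d_k$ and $d_{k+1}<\cdots<d_r$. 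It therefore suffices to prove $c(d_1,\dots,d_r)=0$ for every fixed tuple, after which the claim for arbitrary $s_i$ follows.

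For the combinatorial core, fix a tuple and let $K$ be the length of its maximal weakly decreasing prefix and $L$ the starting index of its maximal strictly increasing suffix, so that $P_k(d)=1$ iff $L-1\le k\le K$. I would then record two elementary observations: $L\ge K$ (an index $L\le i\le K-1$ would satisfy both $d_i\ge d_{i+1}$ and $d_i<d_{i+1}$), and $L\ne K+1$ (that would force simultaneously $d_K<d_{K+1}$ and $d_K\ge d_{K+1}$). Consequently the set of admissible $k$ is either empty (when $L\ge K+2$) or exactly the two consecutive integers $\{K-1,K\}$ (when $L=K$); in the latter case $(-1)^{r-(K-1)}+(-1)^{r-K}=0$, so $c(d_1,\dots,d_r)=0$ in all cases. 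Reading $S_d$ as the $v$-adic degree-$d$ power sum gives the $v$-adic statement by the identical computation.

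I expect the only genuine obstacle to be the justification of the interchange of summation: one must verify that the relevant double series converge in the appropriate topology on $X_\infty$ (respectively $v$-adically on $X_v$) so that the rearrangement into $\sum_{(d_1,\dots,d_r)}$ is legitimate, the combinatorial cancellation above being purely formal and term-by-term. A secondary point demanding care is the index reversal in the non-star factor, which is precisely what makes the two factors glue into a single sum over $(d_1,\dots,d_r)$ cut at position $k=r-l$; mismatching this alignment would destroy the cancellation.
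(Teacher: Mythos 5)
Your proposal is correct, and it proves the identity by a genuinely different organization of the argument than the paper. The paper's proof is a telescoping recursion: starting from $\zeta_\infty^\star(s_1,\dots,s_r)$, it repeatedly replaces the weak inequality at the right end of the chain $i_1\geq\cdots\geq i_r$ by ``free sum minus strict inequality,'' peeling off the products $\zeta_\infty(s_r,\dots,s_{r-l+1})\zeta_\infty^\star(s_1,\dots,s_{r-l})$ one at a time with alternating signs until the last term $\pm\zeta_\infty(s_r,\dots,s_1)$ appears; the $v$-adic case is then obtained, exactly as in your proposal, by replacing $S_{i_j}(s_j)$ with $\tilde{S}_{i_j}(s_j)$. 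You instead expand every summand fully into monomials $\prod_i S_{d_i}(s_i)$ indexed by degree tuples and prove that each coefficient vanishes via the prefix/suffix lemma ($L\geq K$, $L\neq K+1$, hence the admissible set of $k$ is empty or equals $\{K-1,K\}$) --- I checked this lemma and the alignment of indices coming from the reversal in the non-star factor, and both are right. The two proofs rest on the same dichotomy $i_j\geq i_{j+1}$ versus $i_j<i_{j+1}$, but yours makes explicit the pairwise cancellation that the paper's recursion realizes only implicitly, and it reduces the theorem to a self-contained combinatorial statement applied uniformly in both the $\infty$-adic and $v$-adic settings. The price is the global rearrangement of the multiple series into a sum over tuples, which you correctly flag: it is justified because, in a complete non-archimedean field, a family is summable as soon as its terms tend to zero in the relevant filtration, and then arbitrary regroupings converge to the same value; the needed decay is exactly the quadratic growth of valuations proved in the paper's Appendix \ref{AppA} (Propositions \ref{Appinfty} and \ref{Lemmaestimation}). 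The paper's telescoping avoids discussing this full rearrangement (it only splits and recombines convergent series finitely many times, together with the standard product formula for convergent series, which it uses without comment), at the cost of a longer chain of displayed manipulations; so your route is arguably cleaner once the summability remark is made explicit.
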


The plan of the present paper goes as follows: Section \ref{SectionDefinitions} is on the definitions of $\infty$MZFs and $v$MZFs. The orthogonal properties (Theorem \ref{orthogonality}) are obtained in this section. In Section \ref{Construction of multiple zeta measures}, we obtain the integral expression of them (Theorem \ref{IntegralExpression})  and observe that they imply a Kummer-type congruence (Theorem \ref{KummerType}). We devote Section \ref{SectionINductiveFormulae} to recursive formulae which enable us to write down special values of $\infty$MZFs and $v$MZFs at negative integers in terms special values with simpler arguments (Theorems \ref{recursiveformulae}, and \ref{recursiveformulaev}). As a corollary, we obtain relationships between special values of $\infty$-adic and those of $v$-adic multipe zeta functions at negative integers (Theorem \ref{EqInductiveCor}).

\section{$\infty$-adic and $v$-adic multiple zeta functions}\label{SectionDefinitions}
This section is on the definitions of $\infty$MZFs and of $v$MZFs. In section \ref{2.1}, we review the definitions of Carlitz-Goss zeta function and of Goss' $v$-adic zeta function. In section \ref{2.2}, $\infty$MZFs and $v$MZFs are defined as certain infinite sums (Definitions \ref{DefMZF} and \ref{Def$v$MZF}), which appeared in \cite[\S 6]{Angles2016}.

Let us fix a power $q$ of a prime number $p$ and take indeterminate variable $\theta$. We consider the one-variable polynomial ring $A\coloneqq \mathbb{F}_q[\theta]$ over the finite field with $q$ elements to be a positive characteristic analogue of the ring $\mathbb{Z}$. The symbol $A_+$ denotes the set of monic polynomials in $A$, which is regarded as an analogue of the set $\mathbb{N}$.
The field of fractions $k\coloneqq \mathbb{F}_q(\theta)$ of $A$ can be regarded as an analogue of the field $\mathbb{Q}$.

\subsection{Single variable case} \label{2.1}

The field of Laurent series $k_\infty\coloneqq \mathbb{F}_q((1/\theta))$ is considered to be an analogue of the field $\mathbb{R}$ and the completion $\mathbb{C}_\infty$ of its algebraic closure $\overline{k}_\infty$ is considerd to be an analogue of the field $\mathbb{C}$.
Goss' character space at $\infty$ is defined by $X_\infty\coloneqq \mathbb{C}_\infty^\times \times \mathbb{Z}_p$ (\cite{Goss1983}) and, for $s=(\sigma,\,t)\in X_\infty$ and $n \in A_+$, we define 
\begin{equation}n^s\coloneqq \sigma^{\deg n} (\theta^{-\deg n} n)^t. \label{powerXinfty}
\end{equation}
We regard the set $\mathbb{Z}$ as a subset of $X_\infty$ via the inclusion $i\mapsto (\theta^i,\,i)$, then the power $n^i$ in the sense of \eqref{powerXinfty} coincides with the power in the usual sense.

An analogue of Riemann zeta function is introduced by Carlitz (the case $s \in \mathbb{N}_{\geq1}$) and Goss (in general):

\begin{definition}[{\cite{Carlitz1935,Goss1979}}] \label{Defzeta}
The \textit{Carlitz-Goss zeta function} is the function $\zeta_\infty : X_\infty \rightarrow \mathbb{C}_\infty$ defined by
\begin{equation}
    \zeta_{\infty}(s)\coloneqq\sum_{i\geq0}S_{i}(s) \label{eqrefCZV}
\end{equation}
where 
\begin{equation}
   S_i(s)\coloneqq  \sum_{\substack{n \in A_+,\\\deg n =i}} \frac{1}{n^s}.
\end{equation}
Especially, we call the value $\zeta_{\infty}(m)$ \textit{the Carlitz zeta value} if $m \in \mathbb{N}$.
\end{definition}

Let us confirm the convergence of \eqref{eqrefCZV} in the case $s \in \mathbb{Z}$. Since $\mathbb{C}_\infty$ is ultrametric, the sum \eqref{eqrefCZV} converges if $s\geq1$. In the case $s\leq0$, the following lemma implies that the sum is finite.

\begin{lemma}[{\cite[Proposition 4.1]{Goss1979}}] \label{firstlemma}
For $m\in \mathbb{N}_{\geq0}$, $S_i(-m)=0$ if $i\geq m+1$.
\end{lemma}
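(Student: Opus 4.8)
The plan is to prove Lemma \ref{firstlemma}, which states that for $m \in \mathbb{N}_{\geq 0}$, the power sum $S_i(-m) = \sum_{\substack{n \in A_+,\, \deg n = i}} n^m = 0$ whenever $i \geq m+1$. The strategy I would take is a direct computation of this power sum by exploiting the additive/linear structure of the monic polynomials of fixed degree over $\mathbb{F}_q$, together with the vanishing of certain character sums over the finite field.

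First I would parametrize the monic polynomials of degree $i$ explicitly: any $n \in A_+$ with $\deg n = i$ can be written as $n = \theta^i + a_{i-1}\theta^{i-1} + \cdots + a_1 \theta + a_0$ with the coefficients $a_0, \dots, a_{i-1}$ ranging independently over $\mathbb{F}_q$. Substituting into $S_i(-m) = \sum_n n^m$ gives a sum over $\mathbb{F}_q^i$ of the $m$-th power of a polynomial expression in the $a_j$. Expanding $n^m$ multinomially, each term is a monomial in the coefficients $a_0, \dots, a_{i-1}$ (times a power of $\theta$), and the total sum factors as a product over each coordinate $a_j$ of sums of the form $\sum_{a_j \in \mathbb{F}_q} a_j^{e_j}$. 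The key arithmetic fact is that such a sum vanishes in $\mathbb{F}_q$ unless the exponent $e_j$ is a positive multiple of $q-1$, and in particular it vanishes whenever $0 \le e_j \le q-2$ is not covered, with the crucial case being that $\sum_{a \in \mathbb{F}_q} a^e = 0$ for $0 \le e < q-1$ (including $e=0$ when interpreted with $0^0 = 1$, since then the sum is $|\mathbb{F}_q| = 0$ in characteristic $p$).

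The heart of the argument is a counting/degree bound: if $i \geq m+1$, then in any multinomial expansion of $n^m$ the total degree in the variables $a_0, \dots, a_{i-1}$ is at most $m$, so it is impossible for \emph{every} one of the $i$ coordinates to receive an exponent that is a positive multiple of $q-1$; indeed at least one coordinate $a_j$ must appear with exponent $e_j$ satisfying $0 \le e_j \le q-2$ (in fact at least $i - m > 0$ coordinates receive exponent zero or too-small exponent). For that coordinate the inner sum $\sum_{a_j \in \mathbb{F}_q} a_j^{e_j}$ vanishes, forcing the whole product to vanish. Hence every term in the expansion contributes zero, and $S_i(-m) = 0$.

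I expect the main obstacle to be bookkeeping rather than any deep idea: one must set up the multinomial expansion carefully and argue uniformly that for $i \ge m+1$ no monomial can have all $i$ coordinate-exponents simultaneously be positive multiples of $q-1$, handling cleanly the boundary case of a coordinate with exponent $0$ (where $\sum_{a \in \mathbb{F}_q} 1 = q = 0$ in $\mathbb{C}_\infty$). Since this is a known result of Goss (\cite[Proposition 4.1]{Goss1979}), an alternative and perhaps cleaner route is to cite the standard theory of power sums of the Carlitz module / the formula $S_i(-m) = \sum_{n} n^m$ and invoke the vanishing of symmetric-function power sums over $\mathbb{F}_q$ directly; but the self-contained elementary argument above via character-sum vanishing is the one I would write out.
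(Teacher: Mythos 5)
Your proposal is correct, but it takes a genuinely different route from the paper. The paper proceeds by induction on $m$: it peels off only the constant coefficient, writing $n=\theta a+b$ with $a$ monic of degree $i-1$ and $b\in\mathbb{F}_q$, expands $(\theta a+b)^m$ binomially, kills the top term $l=m$ via $\sum_{b\in\mathbb{F}_q}b^0=q=0$, and disposes of every term with $l\leq m-1$ by recognizing the inner sum as $S_{i-1}(-l)$, which vanishes by the induction hypothesis since $i-1\geq m\geq l+1$. You instead parametrize \emph{all} coefficients at once, $n=\theta^i+a_{i-1}\theta^{i-1}+\cdots+a_0$ with $(a_0,\dots,a_{i-1})\in\mathbb{F}_q^i$, expand $n^m$ multinomially, factor each term's coefficient sum as $\prod_j\bigl(\sum_{a_j\in\mathbb{F}_q}a_j^{e_j}\bigr)$, and use the degree bound $\sum_j e_j\leq m<i$ to force at least one $e_j=0$, whence that factor is $q=0$. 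Your argument is non-inductive and one-shot, and it even yields the refinement that at least $i-m$ coordinates carry exponent zero; note that this degree count means you only ever need the trivial case $\sum_{a\in\mathbb{F}_q}a^0=q=0$ of the character-sum lemma, so the full statement about positive multiples of $q-1$ (where your phrase ``$0\le e_j\le q-2$'' is slightly loose, since a non-multiple of $q-1$ can exceed $q-2$) is invoked but never actually needed. The paper's induction buys brevity and minimal bookkeeping — binomial rather than multinomial expansion, and only two terms to track per step — while yours buys a self-contained closed-form computation; both rest ultimately on the same fact that $q=0$ in characteristic $p$.
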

\begin{proof}This lemma in proven by induction. If $m=0$, we have
\begin{equation*}
S_i(0)=1\cdot(q^{i})=0.
\end{equation*}

For $m\geq1$, the assertion is shown as follows: 
\begin{align*}
S_{i}(-m)&=\sum_{\substack{\deg a =i-1\\a:\text{monic}\\b\in\mathbb{F}_q}}(\theta a+b)^m
=\sum_{\substack{\deg a =i-1\\a:\text{monic}\\b\in\mathbb{F}_q}} \sum_{l=0}^m (\theta a)^l b^{m-l} \dbinom{m}{l}\\ 
&=\sum_{\substack{\deg a =i-1\\a:\text{monic}}} \theta^m a^m \sum_{b\in\mathbb{F}_q}b^0+ \sum_{\substack{0\leq l\leq m-1\\b\in \mathbb{F}_q}}\theta^l b^{m-l}\dbinom{m}{l}\left(\sum_{\substack{\deg a =i-1\\a:\text{ monic}}}a^l\right)=0.
\end{align*}
\end{proof}
\begin{remarks}
Goss proved that the sum \eqref{eqrefCZV} converges for any $s \in X_\infty$ (\cite[Corollary 2.3.4]{Goss1983}).
It is known that this function satisfies some properties similar to those of the Riemann zeta function. For example, Carlitz zeta values $\zeta_{\infty}(s)$ with $s\in \mathbb{N},\, (q-1)|s$ can be written down in terms of analogues of Bernoulli numbers and fundamental period $2\pi i$ of exponential function (see \cite[Theorem 9.4]{Carlitz1935} for details) and they have Euler product expression.
\end{remarks}

Irreducible monic polynomials are consider to be an analogue of prime numbers. The symbol $v$ stands for a fixed irreducible monic polynomial in $A$ of degree $d>0$. 
We denote the completions of $A$ and $k$ corresponding to $v$ by $A_v$ and $k_v$, respectively. They are consider to be analogues of $\mathbb{Z}_p$ and $\mathbb{Q}_p$, respectively.  We write $\mathfrak{m}_v$ for the maximal ideals of $A$ and $A_v$ generated by $v$.
We put 
\begin{equation}
\mathbb{Z}_{d,\,p}\coloneqq \lim_{\substack{\leftarrow\\e}}\mathbb{Z}/(q^d-1)q^{de}=\mathbb{Z}/(q^d-1) \times \mathbb{Z}_p.
\end{equation}
We can define $n^t$ for $n \in A_v  \setminus \mathfrak{m}_v$ and $t \in {\mathbb{Z}_{d,\,p}}$ (\cite[Proposition 6.2]{Goss1979}).
We further put $X_v\coloneqq k_v^\times \times {\mathbb{Z}_{d,\,p}}$ and, for $s=(\sigma,\,t)\in X_v$ and $n\in A_v  \setminus \mathfrak{m}_v$, define $n^s\coloneqq \sigma^{\deg n}n^{t}$. The set $\mathbb{Z}$ can be seen as a subset of $X_v$ via the inclusion $i\mapsto (1,\,i)$.
Goss introduced $v$-adic zeta function, which interpolates the special values of Carlitz-Goss zeta function at negative integers $v$-adically and hence can be seen as a positive characteristivc analogue of $p$-adic $L$ function: 
\begin{definition}[{\cite[Addendum, Definition 1.1]{Goss1979}}]\label{Defvzeta}
The \textit{$v$-adic zeta function} $\zeta_v(s)\,(s=(\sigma,t) \in X_v)$ is the function on $X_v$ with values in $k_v$ defined by
\begin{equation}
    \zeta_v(s)\coloneqq \sum_{i \geq0}\sigma^{-i} \tilde{S}_i(t)=\sum_{i\geq0}\tilde{S}_i(s) \in k_v,\label{EqDefzvetafunction}
\end{equation}
where 
\begin{equation}
   \tilde{S}_i(s)\coloneqq  \sum_{\substack{n \in A_+\\\deg n =i\\(n,v)=1}}\frac{1}{n^s} \in k_v.
\end{equation}

\end{definition}
Since the equality
\begin{equation}
     \tilde{S}_i(s)=\begin{cases}
   S_i(s)-v^{-s}S_{i-d}(s) &(i\geq d),\\
   S_i(s) & (i<d),\\
   \end{cases}
\end{equation}
holds if $s \in \mathbb{Z}$, we have 
\begin{equation}\zeta_v(-m)=(1-v^{m})\zeta_{\infty}(-m) \in k
\end{equation}
for any $m\in \mathbb{Z}_{\geq0}$, i.e., $v$-adic zeta function interpolates the special values of Carlitz-Goss zeta function at negative integers $v$-adically and hence can be seen as a positive characteristivc analogue of $p$-adic $L$-function. Goss proved that the sum \eqref{EqDefzvetafunction} converges (\cite[Theorem 5.4.7]{Goss1983}) and has an integral expression (Theorem \ref{SingleIntegralExpression}).

\begin{remarks}
Goss considered $\infty$-adic and $v$-adic zeta functions in more general setting where the ring $A$ is allowed to be the ring of functions of arbitrary smooth curve over $\mathbb{F}_q$ regular outside a fixed rational point $\infty$ (\cite{Goss1979, Goss1983}).
\end{remarks}

\subsection{Multivariable case} \label{2.2}

We consider positive characteristic analogues of multiple zeta functions:
\begin{definition}[{\cite[\S 6]{Angles2016}}]\label{DefMZF}
For $s_1=(\sigma_1,\,t_1),\,\dots,\,s_r=(\sigma_r,\,t_r) \in X_\infty$, we define 
\begin{align}
\zeta_{\infty}(s_1,\,\dots,\,s_r)&\coloneqq \sum_{i_1>\cdots>i_r\geq 0} S_{i_1}(s_1)\cdots S_{i_r}(s_r) \in \mathbb{C}_\infty, \label{EqDefMZF} \\
\zeta_{\infty}^\star(s_1,\,\dots,\,s_r)&\coloneqq \sum_{i_1\geq\cdots\geq i_r\geq 0} S_{i_1}(s_1)\cdots S_{i_r}(s_r) \in \mathbb{C}_\infty.\label{EqDefMZSF} \\
\end{align}
We call these functions on $X_\infty^r$ \textit{$\infty$-adic multiple zeta function} (\textit{$\infty$MZF} in short) and \textit{$\infty$-adic multiple zeta star function} (\textit{$\infty$MZSF} in short), respectively.
\end{definition}

Though it follows from the arguments in \cite[Corollary 6.1]{Angles2016} in their more general setting that the series \eqref{EqDefMZF} and \eqref{EqDefMZSF} defining $\infty$MZFs and $\infty$MZSFs converge and are rigid analytic, we present proofs in order to make our paper self-contained in Appendix \ref{AppA}.

\begin{remarks}
(1) In \cite{Angles2016}, Angl\`{e}s et al. worked in more general setting where the ring $A$ is allowed to be the ring of functions of arbitrary smooth curve over $\mathbb{F}_q$ regular outside a fixed rational point $\infty$. They also considered twisting of the numerators of summands.

(2) If $r=1$, both of $\infty$MZF and $\infty$MZSF coincide with Carlitz-Goss zeta function in Definition \ref{Defzeta}.

(3) For $m_1,\,\dots ,\, m_r \in \mathbb{Z}_{\geq 0}$, $\zeta_{\infty}(-m_1,\,\dots,\,-m_r),\, \zeta_{\infty}^\star(-m_1,\,\dots,\,-m_r) \in A$ since Lemma \ref{firstlemma} implies that the sum in \eqref{EqDefMZF} and \eqref{EqDefMZSF} are finite sums of elements in $A$.

(4) The special values of these functions at positive integers coincide with Thakur's multiple zeta values and multiple zeta star values, respectively (\cite[\S 5.10]{ThakurBook}). 
\end{remarks}

Next we consider positive characteristic analogues of $p$-adic multiple $L$-functions:

\begin{definition}[{\cite[\S 6]{Angles2016}}] \label{Def$v$MZF}
For $s_1=(\sigma_1,\,t_1),\,\dots,\,s_r=(\sigma_r,\,t_r) \in X_v$, we define
\begin{align}
\zeta_v(s_1,\,\dots,\,s_r)&\coloneqq \sum_{i_1>\cdots>i_r\geq 0} \sigma_1^{-i_1} \cdots\sigma_r^{-i_r}\tilde{S}_{i_1}(t_1)\cdots \tilde{S}_{i_r}(t_r)  \label{EqDef$v$MZF}\\
&=\sum_{i_1>\cdots>i_r\geq 0} \tilde{S}_{i_1}(s_1)\cdots \tilde{S}_{i_r}(s_r) \in k_v ,\\
\zeta_v^\star (s_1,\,\dots,\,s_r)&\coloneqq \sum_{i_1\geq \cdots \geq 
i_r\geq 0} \sigma_1^{-i_1} \cdots\sigma_r^{-i_r}\tilde{S}_{i_1}(t_1)\cdots \tilde{S}_{i_r}(t_r)  \label{EqDefvMZSF}\\
&=\sum_{i_1\geq\cdots\geq i_r\geq 0} \tilde{S}_{i_1}(s_1)\cdots \tilde{S}_{i_r}(s_r) \in k_v.\\
\end{align}
We call these functions on $X_v^r$ by \textit{$v$-adic multiple zeta functions} (\textit{$v$MZFs} in short) and \textit{$v$-adic multiple zeta star functions} (\textit{$v$MZSFs} in short), respectively.
\end{definition}

Though it follows from the arguments in \cite[Corollary 6.2]{Angles2016} that the series \eqref{EqDef$v$MZF} and \eqref{EqDefvMZSF} converge and are rigid analytic, we present proofs in order to make our paper self-contained in Appendix \ref{AppA}.

\begin{remarks}\label{remarkink}
(1) In \cite{Angles2016}, Angl\`{e}s et al worked in more general setting where the ring $A$ is allowed to be the ring of functions of arbitrary smooth curve over $\mathbb{F}_q$ regular outside a fixed rational point $\infty$. They also considered twisting of the numerators of summands.

(2) In $r=1$, both of the $v$MZF and $v$MZSF coincide with $v$-adic zeta function in Definition \ref{Defvzeta}.

(3) For $m_1,\,\dots,\, m_r \in \mathbb{Z}_{\geq 0}$, we can see that the special values $\zeta_v(-m_1,\,\dots,\,-m_r)$ and $ \zeta_v^\star(-m_1,\,\dots,\,-m_r)$ are in $A$ since Lemma \ref{firstlemma} implies that the sum in \eqref{EqDef$v$MZF} and \eqref{EqDefvMZSF} are finite sums of elements in $A$. 

(4) The values of $\zeta_v(s_1,\,\dots,\,s_r)$ at integers coincide with (interpolated) $v$-adic multiple zeta values introduced by Thakur (\cite[\S 5.10]{ThakurBook}). It should be noticed that he suggested that they can be interpolated to $v$-adic continuous functions on $\mathbb{Z}_{d,\,p}^r$. Q.~Shen also studied these function on $\mathbb{Z}_{d,\,p}^r$ (\cite{Shenthesis}). Another formulation of $v$-adic multiple zeta values was also introduced by Chang and Mishiba (\cite{Chang2021}). Their $v$-adic multiple zeta values are different from those introduced by Thakur, but it is expected that $v$-adic multiple zeta values introduced by Thakur and those introduced by Chang and Mishiba are related each other via certain linear relations.

\end{remarks}

\subsection{Orthognal propeties of MZFs}

The following theorem, which we call the orthogonal property, is the positive characteristic analogues of \cite[Theorem 3]{Zlobin2005}.

\begin{theorem}\label{orthogonality}
For $s_1,\,\dots,\,s_r \in X_\infty$, we have
\begin{equation}
    \sum_{l=0}^{r}(-1)^{l}\zeta_\infty(s_r,\,\dots,\,s_{r-l+1})\zeta_\infty^\star(s_1,\,\dots,\,s_{r-l})=0 \label{star and nonstar}
\end{equation}
(here, the summand with $l=0$ is $\zeta_\infty^\star(s_1,\,\dots,\,s_{r})$ and the term with $l=r$ is $(-1)^r\zeta_\infty(s_r,\,\dots,\,s_{1})$). Similarly, for $s_1,\,\dots,\,s_r \in X_v$, we have
\begin{equation}
    \sum_{l=0}^{r}(-1)^{l}\zeta_v(s_r,\,\dots,\,s_{r-l+1})\zeta_v^\star(s_1,\,\dots,\,s_{r-l})=0
\end{equation}
\end{theorem}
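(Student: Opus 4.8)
The plan is to expand the alternating sum into a single multiple series indexed by tuples $\mathbf{n}=(n_1,\dots,n_r)\in\mathbb{Z}_{\geq 0}^{r}$ and to reduce the whole identity to a sign‑cancellation governed by the ascent/descent pattern of $\mathbf{n}$. I will carry out the $\infty$-adic case; the $v$-adic case is identical after replacing every $S_i(s)$ by $\tilde S_i(s)$ and working in $k_v$ instead of $\mathbb{C}_\infty$. First I would rewrite each summand as one multiple series. Writing the reversed non‑star factor $\zeta_\infty(s_r,\dots,s_{r-l+1})$ as a sum over $n_r>n_{r-1}>\cdots>n_{r-l+1}\geq 0$ (reversing the arguments turns the strict decrease of the defining series into the strict increase $n_{r-l+1}<\cdots<n_r$) and the star factor $\zeta_\infty^\star(s_1,\dots,s_{r-l})$ as a sum over $n_1\geq\cdots\geq n_{r-l}\geq 0$, and multiplying these two independent series, I obtain
\[
T_l := \zeta_\infty(s_r,\dots,s_{r-l+1})\,\zeta_\infty^\star(s_1,\dots,s_{r-l})
= \sum_{\mathbf{n}\in D_l}\ \prod_{j=1}^{r}S_{n_j}(s_j),
\]
where $D_l\subseteq\mathbb{Z}_{\geq 0}^{r}$ consists of the $\mathbf{n}$ with $n_1\geq\cdots\geq n_{r-l}$ and $n_{r-l+1}<\cdots<n_r$, the gap between $n_{r-l}$ and $n_{r-l+1}$ being unconstrained. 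The extreme cases $l=0$ and $l=r$ recover $\zeta_\infty^\star(s_1,\dots,s_r)$ and $\zeta_\infty(s_r,\dots,s_1)$, matching the stated conventions.

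Next I would invoke non‑archimedean summability. Since the defining series converge in the ultrametric field $\mathbb{C}_\infty$ (Appendix \ref{AppA}), the family $\{\prod_j S_{n_j}(s_j)\}_{\mathbf n}$ has terms tending to $0$, so the disjoint union $\bigsqcup_{l=0}^{r}(\{l\}\times D_l)$ indexes a summable family. Regrouping this finite alternating sum by the value of $\mathbf{n}$ is then legitimate, giving
\[
\sum_{l=0}^{r}(-1)^{l}T_l=\sum_{\mathbf{n}\in\mathbb{Z}_{\geq 0}^{r}} c(\mathbf{n})\prod_{j=1}^{r}S_{n_j}(s_j),\qquad c(\mathbf{n}):=\sum_{l:\,\mathbf{n}\in D_l}(-1)^{l}.
\]

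The conceptual core is then to show $c(\mathbf{n})=0$ for every $\mathbf{n}$. I would encode $\mathbf{n}$ by its pattern $\epsilon_j\in\{D,A\}$ for $1\leq j\leq r-1$, where $\epsilon_j=D$ if $n_j\geq n_{j+1}$ and $\epsilon_j=A$ if $n_j<n_{j+1}$. Setting $p=r-l$, one checks that $\mathbf{n}\in D_l$ precisely when $p$ is a \emph{cut point}: $\epsilon_j=D$ for all $j<p$ and $\epsilon_j=A$ for all $j>p$. If the pattern ever has an $A$ to the left of a $D$, then no cut point exists and $c(\mathbf{n})$ is an empty sum. Otherwise the pattern has the shape $D^{a}A^{\,r-1-a}$, and then exactly the two cut points $p=a$ and $p=a+1$ occur; their signs $(-1)^{r-a}$ and $(-1)^{r-a-1}$ cancel. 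In either case $c(\mathbf{n})=0$, so the entire sum vanishes.

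The only step I expect to require real care is the interchange in the second paragraph: making the passage from the finite alternating sum to the regrouped infinite multiple series rigorous. This is where I must appeal to unconditional (non‑archimedean) summability rather than to any order‑dependent notion of convergence, using only that the relevant monomials tend to $0$. Once that is in place, the combinatorial identity of the third paragraph closes the $\infty$-adic statement, and the verbatim analogue with $\tilde S_i$ in place of $S_i$, carried out in $k_v$, yields the $v$-adic statement.
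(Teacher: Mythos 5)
Your proposal is correct, and it takes a genuinely different route from the paper. The paper proves the identity by a telescoping recursion: starting from $\zeta_\infty^\star(s_1,\dots,s_r)=\sum_{i_1\geq\cdots\geq i_r\geq 0}S_{i_1}(s_1)\cdots S_{i_r}(s_r)$, it repeatedly splits off the rightmost variable, at each step extracting one product $\zeta_\infty(s_r,\dots,s_{r-j+1})\zeta_\infty^\star(s_1,\dots,s_{r-j})$ and leaving a remainder series carrying one additional strict inequality, until after $r$ steps the remainder is $\pm\zeta_\infty(s_r,\dots,s_1)$ and the alternating sum has been assembled term by term. You instead expand every product $T_l$ at once into a series over the domain $D_l\subseteq\mathbb{Z}_{\geq0}^r$, regroup the whole alternating sum by the tuple $\mathbf{n}$, and kill each coefficient $c(\mathbf{n})$ by the ascent/descent pattern argument: patterns with an ascent before a descent admit no cut point, and patterns $D^aA^{r-1-a}$ admit exactly the two cut points $p=a,\,a+1$ whose signs cancel. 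Your combinatorial core checks out (including the degenerate cases $l=0$, $l=r$, and $r=1$), and your explicit appeal to unconditional ultrametric summability is exactly what legitimizes the regrouping; note only that ``terms tend to $0$'' for the product family needs the (routine) remark that each factor family $\{S_i(s_j)\}_i$ is bounded and null, so any tuple with $\max_j n_j$ large has small product. What the two approaches buy: the paper's recursion keeps the analysis minimal --- each step is a single splitting of one series into two sub-series, with the rearrangement issues kept local and implicit --- while your global regrouping isolates the identity's combinatorial content (a sign-reversing cancellation on patterns) from its analytic content (summability), which makes the mechanism of cancellation transparent and would generalize verbatim to any setting where the monomials $\prod_j S_{n_j}(s_j)$ form a summable family; the replacement $S_i\mapsto\tilde S_i$ for the $v$-adic case is equally immediate in both treatments.
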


\begin{proof}
The vanishing of the summation of the $\infty$-adic functions can be confirmed as follows:
\begin{align}
    &\quad \  \zeta_\infty^\star (s_1,\,\dots,\,s_r)=\sum_{i_1\geq\cdots \geq i_r\geq0} S_{i_1}(s_1)\cdots S_{i_r}(s_r)\\
    \intertext{abbreviating $S_{i_j}(s_j)$ as $S_j$ for $1\leq j \leq r$ (each $S_j$ depends on $i_j \in \mathbb{Z}_{\geq 0}$),}
    &=\sum_{i_r=0}^\infty S_r \sum_{i_1\geq\cdots \geq i_{r-1}}S_1 \cdots S_{r-1} -\sum_{\substack{i_1\geq\cdots \geq i_{r-1}\\i_{r}>i_{r-1}}} S_1\cdots S_r\\
    &=\zeta_\infty(s_r)\zeta_\infty^\star(s_1,\,\dots,\,s_{r-1}) -\sum_{\substack{i_1\geq\cdots \geq i_{r-1}\\i_{r}>i_{r-1}}} S_1\cdots S_r\\
    &=\zeta_\infty(s_r)\zeta_\infty^\star(s_1,\,\dots,\,s_{r-1})-\sum_{i_{r}>i_{r-1}}S_{r-1}S_r \sum_{i_1\geq\cdots\geq 
    i_{r-2}}S_1\cdots S_{r-2}\\
    &\quad +\sum_{\substack{i_1\geq\cdots \geq i_{r-2}\\i_{r}>i_{r-1}>i_{r-2}}} S_{i_1}\cdots S_{i_r}\\
    &=\zeta_\infty(s_r)\zeta_\infty^\star(s_1,\,\dots,\,s_{r-1})-\zeta_\infty(s_r,\,s_{r-1})\zeta_\infty^\star(s_1,\,\dots,\,s_{r-2})\\
    &\quad +\sum_{\substack{i_1\geq\cdots \geq i_{r-2}\\i_{r}>i_{r-1}>i_{r-2}}} S_{i_1}\cdots S_{i_r}\\
    &=\zeta_\infty(s_r)\zeta_\infty^\star(s_1,\,\dots,\,s_{r-1})-\zeta_\infty(s_r,\,s_{r-1})\zeta_\infty^\star(s_1,\,\dots,\,s_{r-2})\\
    &\quad +\sum_{i_{r}>i_{r-1}>i_{r-2}}S_{r-2}S_{r-1}S_r \sum_{i_1\geq\cdots \geq i_{r-3}}S_1\cdots S_{r-3}\\
    &\quad -\sum_{\substack{i_1\geq\cdots  \geq i_{r-3}\\i_{r}>i_{r-1}>i_{r-2}>i_{r-3}}} S_{i_1}\cdots S_{i_r}\\
    &=\zeta_\infty(s_r)\zeta_\infty^\star(s_1,\,\dots,\,s_{r-1})-\zeta_\infty(s_r,\,s_{r-1})\zeta_\infty^\star(s_1,\,\dots,\,s_{r-2})\\
    &\quad +\zeta_\infty(s_r,\,s_{r-1},\,s_{r-2})\zeta_\infty^\star(s_1,\,\dots,\,s_{r-3}) -\sum_{\substack{i_1\geq\cdots \geq i_{r-3} \\i_{r}>i_{r-1}>i_{r-2}>i_{r-3}}} S_{i_1}\cdots S_{i_r}\\
    &\ \ \vdots\\
    &=\zeta_\infty(s_r)\zeta_\infty^\star(s_1,\,\dots,\,s_{r-1})-\zeta_\infty(s_r,\,s_{r-1})\zeta_\infty^\star(s_1,\,\dots,\,s_{r-2})+\cdots\\
    &\quad \cdots-(-1)^{r-2} \sum_{\substack{i_1\geq i_2\\i_r>\cdots>i_2}}S_1 \cdots S_r\\
    &=\zeta_\infty(s_r)\zeta_\infty^\star(s_1,\,\dots,\,s_{r-1})-\zeta_\infty(s_r,\,s_{r-1})\zeta_\infty^\star(s_1,\,\dots,\,s_{r-2})+\cdots\\
    &\quad \cdots-(-1)^{r-2} \sum_{i_r>\cdots>i_3}S_3 \cdots S_r \sum_{i_1\geq i_2}S_1 S_2 -(-1)^{r-1} \sum_{\substack{i_1\\i_r>\cdots>i_2}}S_1\cdots S_r\\
     &=\zeta_\infty(s_r)\zeta_\infty^\star(s_1,\,\dots,\,s_{r-1})-\zeta_\infty(s_r,\,s_{r-1})\zeta_\infty^\star(s_1,\,\dots,\,s_{r-2})+\cdots\\
    &\quad \cdots-(-1)^{r-2} \zeta_\infty(s_r,\,\dots,\,s_3)\zeta_\infty^\star(s_1,\,s_2) -(-1)^{r-1} \sum_{\substack{i_1\\i_r>\cdots>i_2}}S_1\cdots S_r\\
    &=-(-1)^r\zeta_\infty(s_r)\zeta_\infty^\star(s_1,\,\dots,\,s_{r-1})-(-1)^2\zeta_\infty(s_r,\,s_{r-1})\zeta_\infty^\star(s_1,\,\dots,\,s_{r-2})+\cdots\\
    &\quad \cdots-(-1)^{r-2} \zeta_\infty(s_r,\,\dots,\,s_3)\zeta_\infty^\star(s_1,\,s_2) -(-1)^{r-1} \sum_{i_r>\cdots>i_2}S_2\cdots S_r \sum_{i_1}S_1 \\
    &\quad - (-1)^r\sum_{i_r>\cdots>i_1}S_1\cdots S_r\\
     &=-(-1)^r\zeta_\infty(s_r)\zeta_\infty^\star(s_1,\,\dots,\,s_{r-1})-(-1)^2\zeta_\infty(s_r,\,s_{r-1})\zeta_\infty^\star(s_1,\,\dots,\,s_{r-2})+\cdots\\
    &\quad \cdots-(-1)^{r-2} \zeta_\infty(s_r,\,\dots,\,s_3)\zeta_\infty^\star(s_1,\,s_2) -(-1)^{r-1}  \zeta_\infty(s_r,\,\dots,\,s_2)\zeta_\infty^\star(s_1)\\
    &\quad - (-1)^r \zeta_\infty(s_r,\,\dots,\,s_1)\\
    &=-\sum_{l=1}^{r}(-1)^{l}\zeta_\infty(s_r,\,\dots,\,s_{r-l+1})\zeta_\infty^\star(s_1,\,\dots,\,s_{r-l}).
\end{align}
Replacing $S_j=S_{i_j}(s_j)$ by $\tilde{S}_{i_j}(s_j)$, we can obtain the formula for $v$MZFs and $v$MZSFs.
\end{proof}
\begin{remarks}
In characteristic $0$ case, it is known that the completely same relations betsween multiple zeta values and multiple zeta star values holds, see \cite{Zlobin2005} for details.
\end{remarks}
\begin{remarks}
In positive characteristic case, Chang and Mishiba obtained similar relations
\begin{equation}
    \sum_{l=1}^r (-1)^l \operatorname{Li}_{m_r,\,\dots,\,m_{r-l-1}}(z_r,\,\dots,\,z_{r-l-1})\operatorname{Li}_{m_1,\,\dots,\,m_{r-l}}^\star(z_1,\,\dots,\,z_{r-l})=0 \label{CMstarnonstar}
\end{equation}
between Carlitz multiple polylogarithms and Carlitz multiple star polylogarithms (see \cite[Lemma 4.2.1]{Chang2019a} for the definitions).
Based on the work of Anderson and Thakur in \cite{Anderson1990}, Chang showed in \cite[Theorem 5.5.2]{Chang2014} that there are $u_{m,\,0},\,\dots,\,u_{m\,e_m} \in A$ such that we have
\begin{align*}
\Gamma_{m_1}\cdots \Gamma_{m_r} \zeta(m_1,\,\dots,\,m_r)
&=\sum_{(j_1,\,\dots,\,j_r)\in \mathfrak{J}}\theta^{j_1+\cdots+j_r}\operatorname{Li}_{m_1,\,\dots,\,m_r}(u_{m_1,\,j_1},\,\dots,\,u_{m_r,\,j_r}), \\
\Gamma_{m_1}\cdots \Gamma_{m_r} \zeta^\star(m_1,\,\dots,\,m_r)
&=\sum_{(j_1,\,\dots,\,j_r)\in \mathfrak{J}}\theta^{j_1+\cdots+j_r}\operatorname{Li}_{m_1,\,\dots,\,m_r}^\star(u_{m_1,\,j_1},\,\dots,\,u_{m_r,\,j_r})
\end{align*}
for $m_1,\,\dots,\,m_r \in \mathbb{Z}_{>0}$, where $\Gamma_{m_1},\dots, \Gamma_{m_r}$ are non-zero elements of $A$ called the Carlitz gammas and $\mathfrak{J}=\mathfrak{J}_{m_1,\,\dots,m_r}$ is the product set $\{0,\,1,\,\dots,\,e_{m_1}\}\times\cdots\times\{0,\,1,\,\dots,\,e_{m_r}\}$ for some $e_{m_j} \in \mathbb{Z}_{\geq0}$. Using these equations, we can show that \eqref{star and nonstar} can be deduced from \eqref{CMstarnonstar} in the case where $s_j=m_j \in \mathbb{Z}_{>0}$ $(j=1,\,\dots,\,r)$ as follows: 

\begin{align*}
&\sum_{(j_1,\,\dots,\,j_r)\in \mathfrak{J}_{m_1,\dots,m_r}} \theta^{j_1+\cdots+j_r}\Bigg\{\sum_{l=0}^r (-1)^l \operatorname{Li}_{m_r,\,\dots,\,m_{r-l+1}}(u_{m_r,\,j_r},\,\dots,\,u_{m_{r-l+1},\,j_{r-l+1}})\\
&\qquad\qquad\qquad\qquad\qquad\qquad\qquad\qquad\qquad\cdot\operatorname{Li}_{m_1,\,\dots,\,m_{r-l}}^\star(u_{m_1,\,j_1},\,\dots,\,u_{m_{r-l},\,j_{r-1}}) \Bigg\}\\
&=\sum_{l=0}^r (-1)^l \sum_{\substack{(j_1,\,\dots,\,j_r) \\ \in \mathfrak{J}_{m_1,\dots,m_r}}} \Bigg\{\theta^{j_r+\cdots+j_{r-l+1}}\operatorname{Li}_{m_r,\,\dots,\,m_{r-l+1}}(u_{m_r,\,j_r},\,\dots,\,u_{m_{r-l+1},\,j_{r-l+1}})\\
&\qquad\qquad\qquad\qquad\qquad\qquad\qquad\cdot\theta^{j_1+\cdots+j_l}\operatorname{Li}_{m_1,\,\dots,\,m_{r-l}}^\star(u_{m_1,\,j_1},\,\dots,\,u_{m_{r-l},\,j_{r-l}}) \Bigg\}\\&=\sum_{l=0}^r (-1)^l \sum_{\substack{(j_r,\,\dots,\,j_{r-l+1})\\ \in \mathfrak{J}_{m_r,\dots,m_{r-l+1}}}}\Bigg\{\theta^{j_r+\cdots+j_{r-l+1}}\operatorname{Li}_{m_r,\,\dots,\,m_{r-l+1}}(u_{m_r,\,j_r},\dots,u_{m_{r-l+1},\,j_{r-l+1}})\Bigg\}\\
&\qquad\qquad\qquad\qquad\quad\cdot\Bigg\{\sum_{\substack{(j_1,\,\dots,\,j_l) \\ \in\mathfrak{J}_{m_1,\dots,m_l}}}\theta^{j_1+\cdots+j_l}\operatorname{Li}_{m_1,\,\dots,\,m_{r-l}}^\star(u_{m_1,\,j_1},\,\dots,\,u_{m_{r-l},\,j_{r-l}}) \Bigg\}\\
&=\sum_{l=0}^{r}(-1)^{l}\Gamma_{m_r}\cdots \Gamma_{m_{r-l+1}} \zeta_\infty(m_r,\,\dots,\,m_{r-l+1})\Gamma_{m_1}\cdots \Gamma_{m_{r-l}} \zeta_\infty^\star(m_1,\,\dots,\,m_{r-l})\\
&=\Gamma_{m_1}\cdots \Gamma_{m_r} \sum_{l=0}^{r}(-1)^{l}\zeta_\infty(m_r,\,\dots,\,m_{r-l+1})\zeta_\infty^\star(m_1,\,\dots,\,m_{r-l}).
\end{align*}

\end{remarks}

\section{Integral expressions}\label{Construction of multiple zeta measures}

In subsection \ref{Single valued case}, we review the integral expression of special values of Carlitz-Goss zeta function at negative integers and of $v$-adic zeta functions obtained by Goss (Theorem \ref{SingleIntegralExpression}). We obtain similar integral expressions of special values of $\infty$MZFs at negative integers and $v$MZFs in subsection \ref{Multivariable case} (Theorem \ref{IntegralExpression}). From these expression of $v$MZFs, we obtain some congruence between special values of $v$MZFs in subection \ref{Multiple Kummer-type congruence}.

\subsection{Single variable case}\label{Single valued case}
In this subsection, we recall Goss' construction of $v$-adic measure which gives integral expressions of special values of Carlitz-Goss zeta function at negative integers and integral expressions of $v$-adic zeta functions (\cite{Goss1979}).

\begin{definition}[{\cite[Addendum Definition 2.1]{Goss1979}}] \label{Bernoulli-Goss measure}
For $m \in \mathbb{Z}_{\geq 0}$ and $\sigma \in k_v  \setminus \mathfrak{m}_v$, we define $A_v$-valued measure $\mu_m^\sigma$ as follows:
\begin{equation}
    \mu_m^\sigma(a+\mathfrak{m}_v^e)\coloneqq \sum_{j\geq0} \sigma^{-j} \sum_{\substack{n\in A_+\\ \deg n=j\\ n \equiv a \mod \mathfrak{m}_v^e}}n^m.
\end{equation}
We simply write $\mu^{\sigma}$ for $\mu_0^{\sigma}$.
\end{definition}

\begin{remarks}
The classical \textit{$m$-th Bernoulli distribution} $B_m$ on $\mathbb{Z}_p$ is defined by
\begin{equation}
B_m(a+p^e\mathbb{Z}_p)\coloneqq (p^e)^{m-1}\mathbb{B}_m\left(\frac{a}{p^e}\right)=m\zeta_{\equiv a (p^e)}(1-m)\quad(0\leq a <p^e),
\end{equation}
where $\mathbb{B}_m(x)$ is the $m$-th Bernoulli polynomial and $\zeta_{\equiv a (p^e)}(s)$ is the partial zeta function defined by
\begin{equation*}
\zeta_{\equiv a (p^e)}(s)\coloneqq \sum_{n \equiv a \mod p^e}\frac{1}{n^s} \quad \operatorname{Re}(s)>1,
\end{equation*}
(cf. \cite[\S 12.1, Example (3)]{WashingtonBook}). Therefore Goss' measures can be seen as positive characteristic analogues of the Bernoulli distributions. Since the Bernoulli distributions are not measures but are just distributions, we have to modify them in order to consider the integration. In contrast to the classical case, we do not need such modifications in the positive characteristic case.
\end{remarks}

Goss obtained the following integral expressions of special values of Carlitz-Goss zeta function at negative integers and of $v$-adic zeta function:
\begin{theorem}[{\cite[Theorem 7.6, Addendum Theorem 2.1]{Goss1979}}]\label{SingleIntegralExpression}
(1) For $m \in \mathbb{N}_{\geq0}$, the following equation holds:
    \begin{equation}
        \zeta_{\infty}(-m)\coloneqq \int_{A_v}x^m \mu^1.
    \end{equation}
   
   (2) For $t \in {\mathbb{Z}_{d,\,p}}$ and $\sigma \in k_v  \setminus \mathfrak{m}_v$, we have the following equation:
    \begin{equation}
            \zeta_v((\sigma,\,-t))\coloneqq \int_{A_v  \setminus \mathfrak{m}_v}x^t \mu^\sigma.
    \end{equation}
\end{theorem}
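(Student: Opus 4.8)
The plan is to evaluate both integrals as limits of Riemann sums against the measure of Definition~\ref{Bernoulli-Goss measure}, and to reduce everything to two elementary vanishing phenomena in characteristic $p$: first, the count $\#\{n\in A_+ : \deg n=j,\ n\equiv a \bmod \mathfrak{m}_v^e\}$ equals $q^{\,j-ed}$ for $j>ed$, which is $0$ in $A_v$; second, the power sum $\sum_{\deg a<N} a^m$ vanishes whenever $N>m$ (write $a=\sum_{i<N}c_i\theta^i$, expand, and use that $\sum_{c\in\mathbb{F}_q}c^e=0$ unless $e$ is a positive multiple of $q-1$, which would force every $c_i$-exponent to be positive and hence their sum $m$ to be at least $N$, impossible). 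Before computing, I would check that $\mu_m^\sigma$ is a genuine bounded $A_v$-valued measure: finite additivity is immediate from splitting the inner sum over finer cosets, and writing $n=a+v^e g$ together with Lemma~\ref{firstlemma} shows that the degree-$j$ term of $\mu_m^\sigma(a+\mathfrak{m}_v^e)$ vanishes once $j>ed+m$, so each value is a finite sum of elements of $A_v$ and the measure is bounded.

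For part~(1) I would fix $e$ with $ed>m$ and expand the Riemann sum $\sum_a a^m\,\mu^1(a+\mathfrak{m}_v^e)$ using this finiteness, grouping the contributions by the degree $j$ of the monic $n$ occurring in $\mu^1$. For $j<ed$ the reduced representative of $n$ is $n$ itself, so these terms assemble into $\sum_{j=0}^{ed-1}S_j(-m)=\zeta_\infty(-m)$ by Lemma~\ref{firstlemma}. For $j\geq ed$ only $j=ed$ survives in $A_v$ (the counts for $j>ed$ being $q^{\,j-ed}=0$), and there each residue $a$ has the unique monic lift $a+v^e$, so these terms collapse to $\sum_{\deg a<ed}a^m$, which is $0$ by the power-sum identity. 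Hence the Riemann sum equals $\zeta_\infty(-m)$ for every large $e$, and so does the integral.

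For part~(2) I would first treat an integer exponent $t=m\in\mathbb{Z}_{\geq 0}$. Restricting the Riemann sum to residues $a$ coprime to $v$ (which forces $(n,v)=1$) and grouping by $\deg n=j$ as before, the range $j<ed$ yields the partial sum $\sum_{j<ed}\sigma^{-j}\tilde{S}_j(-m)$, while the boundary range collapses to $\sigma^{-ed}\sum_{\deg a<ed,\,(a,v)=1}a^m$. This last sum vanishes for large $e$: it equals $\sum_{\deg a<ed}a^m-v^m\sum_{\deg b<(e-1)d}b^m$, and both pieces are $0$ by the power-sum identity. Letting $e\to\infty$ and invoking the convergence of the defining series \cite{Goss1983} gives $\int_{A_v\setminus\mathfrak{m}_v}x^m\,\mu^\sigma=\zeta_v((\sigma,-m))$. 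To pass to an arbitrary $t\in\mathbb{Z}_{d,\,p}$ I would use the density of $\mathbb{Z}_{\geq 0}$ in $\mathbb{Z}_{d,\,p}$ together with the continuity in $t$ of both sides: of $x\mapsto x^t$ uniformly on the compact set $A_v\setminus\mathfrak{m}_v$ (\cite[Proposition~6.2]{Goss1979}) and of $\zeta_v((\sigma,-t))$, so that the identity extends by continuity.

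The main obstacle I anticipate is part~(2): unlike part~(1), where the relevant series is a genuine finite sum, here the series is infinite $v$-adically, so I must argue carefully that the level-$e$ Riemann sum equals exactly the $e$-th partial sum plus a boundary term that provably vanishes, rather than merely approximating the series, and then verify that the continuity-and-density argument legitimately transfers the integer case to all of $\mathbb{Z}_{d,\,p}$. The remaining steps are bookkeeping governed entirely by Lemma~\ref{firstlemma} and the two characteristic-$p$ vanishing identities.
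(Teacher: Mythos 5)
Your proof is correct. The paper itself does not prove Theorem \ref{SingleIntegralExpression} --- it quotes it from Goss \cite{Goss1979} --- but its proofs of the multivariable generalizations (Theorems \ref{IntegralExpression} and \ref{IntegralexpressionofStar}) follow exactly your strategy: evaluate Riemann sums against the coset volumes of the measure, identify the low-degree contributions with the partial sums of the defining series (a finite sum, by Lemma \ref{firstlemma}, in the $\infty$-adic case; the genuine partial sums, converging by Goss's estimates, in the $v$-adic case), show that the degree-$ed$ boundary contribution vanishes while degrees $j>ed$ contribute counts divisible by $q$, and, for part (2), reduce to integer exponents via density of $-\mathbb{N}$ in $\mathbb{Z}_{d,p}$ together with continuity in $t$. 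The only genuine point of divergence is how the boundary term is killed: the paper factors it as $\zeta_\infty(-m)\sum_{f\in\mathbb{F}_q^\times}f^{m}$ and invokes the dichotomy that either $(q-1)\mid m$, in which case $\zeta_\infty(-m)=0$ (trivial zeros), or $(q-1)\nmid m$, in which case the character sum vanishes; you instead use the elementary identity $\sum_{\deg a<N}a^{m}=0$ for $N>m$ (and its coprime-to-$v$ refinement obtained by splitting off multiples of $v$). These are essentially the same fact in different packaging, but your version is more self-contained and uniform: it needs no knowledge of trivial zeros, and it handles $m=0$ without comment, a case where the paper's dichotomy as literally stated does not apply (since $\zeta_\infty(0)=1$) and is rescued only by the $n=0$ term that the factorization drops.
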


\subsection{Multivariable case}\label{Multivariable case}
In this subsection, we generalize the result by Goss dealt with in the previous subsection to the multivariable setting.
\begin{definition}\label{Def multiple zeta measure}
For $\sigma_1,\,\dots,\,\sigma_r \in  k_v  \setminus \mathfrak{m}_v$, we define $A_v$-valued measures $\mu=\mu^{\sigma_1,\,\dots,\,\sigma_r}$ and $\mu^\star=\mu^{\star\,\sigma_1,\,\dots,\,\sigma_r}$ on $A_v^r$ as follows: for $e_1,\,\dots,\,e_r \in \mathbb{N}$ and $ \alpha_1,\,\dots,\,\alpha_r \in A$,
\begin{align}
\mu((\alpha_1,\,\dots,\,\alpha_r)+\underbrace{\mathfrak{m}_v^e \times \cdots\times \mathfrak{m}_v^e}_{r \text{ times}})
\coloneqq \sum_{i_1>\cdots>i_r\geq0} \sigma_1^{-i_1}\cdots\sigma_r^{-i_r}
\sum_{\substack{n_j \in A_+\, \deg n_j=i_j\\ \alpha_j \equiv n_j \mod \mathfrak{m}_v^{e_j} \\ (\text{for } r \geq j \geq 1)}}1, \label{defMZM}\\
\mu^\star((\alpha_1,\,\dots,\,\alpha_r)+\underbrace{\mathfrak{m}_v^e \times \cdots\times \mathfrak{m}_v^e}_{r \text{ times}})
\coloneqq \sum_{i_1\geq\cdots\geq i_r \geq0} \sigma_1^{-i_1}\cdots\sigma_r^{-i_r}
\sum_{\substack{n_j \in A_+\, \deg n_j=i_j\\ \alpha_j \equiv n_j \mod \mathfrak{m}_v^{e_j} \\ (\text{for } r \geq j \geq 1)}}1.
\end{align}
\end{definition}

Let us take an open set $(\alpha_1,\,\dots,\,\alpha_r)+\mathfrak{m}_v^{e_1}\times\cdots\times \mathfrak{m}_v^{e_r} \subset A_v^r$ where $\alpha_1,\,\dots,\,\alpha_r \in A$ and $e_1,\,\dots,\,e_r \in \mathbb{Z}_{\geq0}$, and let $J \in \{1,\,\dots,\,r\}$. For fixed $i_1>\cdots>i_r$, we have
\begin{equation*}
\sum_{\substack{\deg n_j =i_j\\n_j \equiv \alpha_j (\mathfrak{m}_v^{e_j})\\(\text{for } r \geq j \geq 1)}}1=\sum_{\substack{\beta \in A_v/(\mathfrak{m}_v^{e_J+1})\\ \beta \equiv \alpha_J (\mathfrak{m}_v^{e_J})}}\sum_{\substack{\deg n_j =i_j,\,(\text{for } r \geq j \geq 1)\\n_j \equiv \alpha_j (\mathfrak{m}_v^{e_j}),\,(j \neq J) \\n_J\equiv \beta (\mathfrak{m}_v^{e_J+1})}}1.
\end{equation*}
Multiplying $\sigma_1^{-i_1}\cdots\sigma_r^{-i_r}$ and summing up all $i_1>\cdots>i_r$, we have
\begin{align*}
&\mu((\alpha_1,\,\dots,\,\alpha_r)+\mathfrak{m}_v^{e_1}\times\cdots\times \mathfrak{m}_v^{e_r})\\
=&\sum_{\substack{\beta \in A_v/(\mathfrak{m}_v^{e_J+1})\\ \beta \equiv \alpha_J (\mathfrak{m}_v^{e_J})}}\mu((\alpha_1,\,\dots,\,\beta,\,\dots,\alpha_r)+\mathfrak{m}_v^{e_1}\times\cdots \times \mathfrak{m}_v^{e_J+1}\times \cdots \times \mathfrak{m}_v^{e_r}).
\end{align*}

Therefore, we have
\begin{equation}
\mu\left(\bigcup_{i\in I}U_i\right)=\sum_{i\in I}\mu(U_i),\ 
\mu^\star \left(\bigcup_{i\in I}U_i \right)=\sum_{i\in I}\mu^\star(U_i),
\end{equation}
for any pairwise disjoint finite family $\{ U_i\}_{i\in I}$ of compact open sets in $A_v^r$.

If $r=1$, the measures defined above coincides with that defined in Definition \ref{Bernoulli-Goss measure}.

\begin{prop}

If we fix $e \in \mathbb{N}$, take $\sigma_1,\,\dots,\,\sigma_r \in k_v  \setminus \mathfrak{m}_v$ and take $\alpha_1,\,\dots,\,\alpha_r \in A$ such that $\deg \alpha_1,\,\dots,\, \deg \alpha_r < \deg v^e=de$, then the volume of the compact open set $(\alpha_1,\,\dots,\,\alpha_r)+\underbrace{\mathfrak{m}_v^e \times \cdots\times \mathfrak{m}_v^e}_{r \text{ times}}$ defined by the measure $\mu$ can be calculated as follows:
\begin{enumerate}
\item \label{main terms}
If $\deg \alpha_1>\cdots> \deg \alpha_r$ and  $\alpha_1,\,\dots,\,\alpha_r \in A_+$, then 
\begin{equation}
\mu((\alpha_1,\,\dots,\,\alpha_r)+\mathfrak{m}_v^{e}\times\cdots\times \mathfrak{m}_v^{e})
=\sigma_1^{-\deg \alpha_1}\cdots \sigma_r^{-\deg \alpha_r} + \sigma_1^{-de}\sigma_2^{-\deg \alpha_2}\cdots \sigma_r^{-\deg \alpha_r}.
\end{equation}
\item\label{vanishing term1}
If a tuple $(\alpha_1,\,\dots,\,\alpha_r)$ satisfies the following conditions:
\begin{itemize}
    \item $\deg \alpha_2>\cdots> \deg \alpha_r$,
    \item $\alpha_2,\,\dots,\,\alpha_r \in A_+$ and
    \item $\deg \alpha_1 \leq \deg \alpha_2$ or $\alpha_1 \in A \setminus A_+$,
\end{itemize}
then we have 
\begin{equation}
\mu((\alpha_1,\,\dots,\,\alpha_r)+\mathfrak{m}_v^{e}\times\cdots\times \mathfrak{m}_v^{e})
=\sigma_1^{-de}\sigma_2^{-\deg \alpha_2}\cdots \sigma_r^{-\deg \alpha_r}.
\end{equation}
\item\label{vanishing term2}
Otherwise, we have 
\begin{equation}
\mu((\alpha_1,\,\dots,\,\alpha_r)+\mathfrak{m}_v^{e}\times\cdots\times \mathfrak{m}_v^{e})
=0.
\end{equation}
\end{enumerate}
\end{prop}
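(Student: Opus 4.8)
The plan is to reduce the volume to a product of one-variable counting functions and then exploit the characteristic $p$ of $A_v$ to collapse an a priori infinite sum into at most two surviving terms. First I would note that for each fixed tuple $i_1>\cdots>i_r$ the innermost count factors over $j$, since the congruence constraint on $n_j$ involves only $n_j$ and $\alpha_j$:
\[
\sum_{\substack{n_j\in A_+,\ \deg n_j=i_j\\ \alpha_j\equiv n_j\ (\mathfrak{m}_v^{e})}}1=\prod_{j=1}^{r}c_j(i_j),\qquad c_j(i):=\#\{\,n\in A_+:\deg n=i,\ n\equiv\alpha_j\ (\mathfrak{m}_v^{e})\,\}.
\]
Thus $\mu((\alpha_1,\dots,\alpha_r)+\mathfrak{m}_v^{e}\times\cdots\times\mathfrak{m}_v^{e})=\sum_{i_1>\cdots>i_r\ge0}\prod_j\sigma_j^{-i_j}c_j(i_j)$, and the whole computation is controlled by the numbers $c_j(i)$.

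Next I would compute $c_j(i)$ by writing each candidate as $n=\alpha_j+v^{e}g$ with $g\in A$ and comparing degrees, using the standing hypothesis $\deg\alpha_j<de=\deg v^{e}$. For $i<de$ the summand $v^{e}g$ would dominate unless $g=0$, forcing $n=\alpha_j$; hence $c_j(i)=1$ exactly when $\alpha_j\in A_+$ and $\deg\alpha_j=i$, and $c_j(i)=0$ otherwise. For $i\ge de$, monicity of $v^{e}$ forces $g\in A_+$ with $\deg g=i-de$, so $c_j(i)=q^{\,i-de}$. The decisive point—the one I expect to be the crux—is that $A_v$ has characteristic $p$ and $q$ is a power of $p$, so $q^{\,i-de}=0$ in $A_v$ whenever $i>de$, while $q^{0}=1$ when $i=de$. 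Consequently, in $A_v$ we have $c_j(i)\in\{0,1\}$, equal to $1$ precisely when $i=de$, or when $i<de$ with $\alpha_j\in A_+$ and $\deg\alpha_j=i$.

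Finally I would enumerate the strictly decreasing tuples $(i_1,\dots,i_r)$ for which every factor is nonzero. For each position the only admissible values of $i_j$ are $de$ and, when $\alpha_j$ is monic, $\deg\alpha_j$; since $de$ strictly exceeds every $\deg\alpha_j$ and the $i_j$ decrease, the value $de$ can be attached only to $i_1$. This leaves exactly two candidate configurations: (II) $i_j=\deg\alpha_j$ for all $j$, admissible iff all $\alpha_j$ are monic with $\deg\alpha_1>\cdots>\deg\alpha_r$; and (I) $i_1=de$ with $i_j=\deg\alpha_j$ for $j\ge2$, admissible iff $\alpha_2,\dots,\alpha_r$ are monic with $\deg\alpha_2>\cdots>\deg\alpha_r$. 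Their contributions are $\prod_j\sigma_j^{-\deg\alpha_j}$ and $\sigma_1^{-de}\prod_{j\ge2}\sigma_j^{-\deg\alpha_j}$ respectively. Matching against whether both, only (I), or neither configuration is admissible then yields cases \eqref{main terms}, \eqref{vanishing term1}, and \eqref{vanishing term2}; the implication ``(II) admissible $\Rightarrow$ (I) admissible'' explains why no case produces the first term alone. The remaining work is routine bookkeeping in this last step, while the genuine obstacle is recognizing the characteristic-$p$ collapse that turns the infinite tail $\sum_{i\ge de}q^{\,i-de}\sigma_1^{-i}$ into the single term at $i=de$.
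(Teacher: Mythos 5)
Your proof is correct and takes essentially the same route as the paper: both arguments kill the infinite tail of tuples with $i_1>de$ by observing that the relevant counts are multiples of $q$, hence vanish in characteristic $p$, and then enumerate the at most two surviving strictly decreasing tuples $(\deg\alpha_1,\dots,\deg\alpha_r)$ and $(de,\deg\alpha_2,\dots,\deg\alpha_r)$. The only cosmetic difference is that you factor the joint count into one-variable counts and compute them exactly as $q^{\,i-de}$, whereas the paper merely establishes divisibility by $q$ via the translation $n_1\mapsto n_1+f v^e$, $f\in\mathbb{F}_q$.
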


\begin{proof}
For $i_1,\,\dots,\,i_r \in \mathbb{N}$, let $X_{i_1,\,\dots,\,i_r}$ be the set
\begin{equation}
    \{(n_1,\,\dots,\,n_r) \in A_+^r \mid \deg n_j =i_j,\, n_j \equiv \alpha_j \mod \mathfrak{m}_v^e \text{ for }j=1,\,\dots,r\}.
\end{equation}
Now we can write
\begin{equation}
    \mu((\alpha_1,\,\dots,\,\alpha_r)+\mathfrak{m}_v^{e}\times\cdots\times \mathfrak{m}_v^{e})
    =\sum_{i_1>\cdots>i_r} \left(\#X_{i_1,\,\dots,\,i_r} \right) \sigma_1^{-i_1}\,\dots,\,\sigma_r^{-i_r}.
\end{equation}
Let us suppose $i_1>de$. If $(n_1,\,\dots,\,n_r) \in X_{i_1,\,\dots,\,i_r}$, then we can see that $(n_1+fv^e,\,\dots,\,n_r) \in X_{i_1,\,\dots,\,i_r}$ for all $f \in \mathbb{F}_q$, so we have $q|(\#X_{i_1,\,\dots,\,i_r})$. Therefore we assume $i_1\leq de$ in what follows.

In the case (1), we have
\begin{align}
    X_{\deg \alpha_1,\,\dots,\,\deg\alpha_r}&=\{ (\alpha_1,\,\dots,\,\alpha_r) \},\\
     X_{de,\,\deg\alpha_2,\dots,\,\deg\alpha_r}&=\{ (\alpha_1+v^e,\,\alpha_2,\,\dots,\,\alpha_r) \}
\end{align}
and $X_{i_1,\,\dots,\,i_r}=\emptyset$ for other $i_1>\cdots >i_r$.
In the case (2), we have
\begin{equation}
     X_{de,\,\deg\alpha_2,\dots,\,\deg\alpha_r}=\{ (\alpha_1+v^e,\,\alpha_2,\,\dots,\,\alpha_r) \}
\end{equation}
and $X_{i_1,\,\dots,\,i_r}=\emptyset$ for other  $i_1>\cdots >i_r$. In the case (3), $X_{i_1,\,\dots,\,i_r}=\emptyset$ for any $i_1>\cdots >i_r$. Hence the result follows.
\end{proof}

In the same way, we can calculate the measure $\mu^\star$ as follows:
\begin{prop}
Let us fix $e \in \mathbb{N}$, take $\sigma_1,\,\dots,\,\sigma_r \in A_v^*$ and take $\alpha_1,\,\dots,\,\alpha_r \in A$ such that $\deg \alpha_1,\,\dots,\, \deg \alpha_r < \deg v^e =de$. We say that a tuple $(\alpha_1,\,\dots,\,\alpha_r)$ satisfies the condition $C_i$ if $\deg \alpha_i \geq \cdots\geq \deg \alpha_r$ and $\alpha_i,\,\dots,\,\alpha_r \in A_+$. If $i \in \mathbb{N}$ is the smallest such that the condition $C_i$ is satisfied, then
\begin{equation*}
\mu^{\star}((\alpha_1,\,\dots,\,\alpha_r)+\mathfrak{m}_v^{e}\times\cdots\times \mathfrak{m}_v^{e})=\sum_{j=i}^{r}\sigma_1^{-de}\cdots \sigma_{j-1}^{-de }\sigma_j^{-\deg \alpha_j}\cdots\sigma_r^{-\deg \alpha_r}.
\end{equation*}
If $(\alpha_1,\,\dots,\,\alpha_r)$ does not satisfy $C_r$, then $\mu^{\star}((\alpha_1,\,\dots,\,\alpha_r)+\mathfrak{m}_v^{e}\times\cdots\times \mathfrak{m}_v^{e})=0$.
\end{prop}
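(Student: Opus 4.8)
The plan is to run the argument of the preceding proposition with the strict chain $i_1>\cdots>i_r$ replaced throughout by the weak chain $i_1\geq\cdots\geq i_r$, keeping the same auxiliary sets $X_{i_1,\,\dots,\,i_r}$. Writing
\[
\mu^\star((\alpha_1,\,\dots,\,\alpha_r)+\mathfrak{m}_v^{e}\times\cdots\times \mathfrak{m}_v^{e})=\sum_{i_1\geq\cdots\geq i_r}\left(\#X_{i_1,\,\dots,\,i_r}\right)\sigma_1^{-i_1}\cdots\sigma_r^{-i_r},
\]
I would first discard the tuples with $i_1>de$ exactly as before: if $(n_1,\,\dots,\,n_r)\in X_{i_1,\,\dots,\,i_r}$ and $i_1>de$, then $(n_1+fv^{e},\,n_2,\,\dots,\,n_r)\in X_{i_1,\,\dots,\,i_r}$ for every $f\in\mathbb{F}_q$, because $\deg v^{e}=de<i_1$ leaves the degree and leading coefficient of $n_1$ unchanged; hence $q\mid \#X_{i_1,\,\dots,\,i_r}$ and the summand vanishes in characteristic $p$. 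Thus only $i_1\leq de$, and therefore all $i_j\leq de$, survive.

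Since the congruence conditions on the coordinates are independent, $\#X_{i_1,\,\dots,\,i_r}=\prod_{j=1}^{r}N_j(i_j)$, where $N_j(i)$ is the number of monic polynomials of degree $i$ congruent to $\alpha_j$ modulo $v^e$. Using $\deg\alpha_j<de$, a direct count shows that, for $i\leq de$, one has $N_j(i)=1$ in precisely two situations — when $i=de$ (the unique representative being $\alpha_j+v^{e}$) and when $i=\deg\alpha_j$ with $\alpha_j\in A_+$ (the unique representative being $\alpha_j$) — and $N_j(i)=0$ otherwise. Consequently the only tuples contributing a single monomial $\sigma_1^{-i_1}\cdots\sigma_r^{-i_r}$ are those with $i_j\in\{de\}\cup\{\deg\alpha_j : \alpha_j\in A_+\}$ for each $j$.

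It then remains to intersect this support with the monotonicity constraint $de\geq i_1\geq\cdots\geq i_r$. As every admissible small value $\deg\alpha_j$ is strictly below $de$, each surviving non-increasing tuple is forced to consist of an initial block of $de$'s followed, from some position $j$ onwards, by the block $\deg\alpha_j,\,\dots,\,\deg\alpha_r$; this latter block is itself non-increasing and admissible exactly when $\alpha_j,\,\dots,\,\alpha_r\in A_+$ and $\deg\alpha_j\geq\cdots\geq\deg\alpha_r$, that is, exactly when the condition $C_j$ holds. Collecting the monomial $\sigma_1^{-de}\cdots\sigma_{j-1}^{-de}\sigma_j^{-\deg\alpha_j}\cdots\sigma_r^{-\deg\alpha_r}$ attached to each admissible cut-off $j$, and using that $C_j$ implies $C_{j+1}$ so that the admissible cut-offs form a single final segment starting at the least index $i$ with $C_i$, yields the closed form. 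The step needing the most care is this last one: correctly enumerating the non-increasing surviving tuples as prefixes of $de$ capped by a monic, degree-decreasing tail, and in particular tracking the boundary terms with some $i_j=de$ — these are exactly the ones that escape the characteristic-$p$ vanishing — together with the nesting of the conditions $C_j$.
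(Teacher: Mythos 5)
Your overall strategy coincides with the paper's (which in fact prints no proof for $\mu^\star$ at all, saying only that it is computed ``in the same way'' as $\mu$), and your counting is correct as far as it goes: tuples with $i_1>de$ die because $q\mid\#X_{i_1,\,\dots,\,i_r}$, and for $i\leq de$ one has $N_j(i)=1$ exactly when $i=de$ or when $i=\deg\alpha_j$ with $\alpha_j\in A_+$. The gap is in the last step, precisely the one you flagged as delicate: among the surviving non-increasing tuples you must also count the cut-off $j=r+1$, i.e.\ the tuple $(i_1,\,\dots,\,i_r)=(de,\,\dots,\,de)$ with empty tail. It is admissible \emph{unconditionally}: $X_{de,\,\dots,\,de}=\{(\alpha_1+v^e,\,\dots,\,\alpha_r+v^e)\}$ is a singleton (monicity, $\deg n_j=de$ and $n_j\equiv\alpha_j$ force $n_j=\alpha_j+v^e$, since $\deg\alpha_j<de$), and it escapes the characteristic-$p$ argument, which requires $i_1>de$. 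Completed honestly, your computation therefore gives
\begin{equation*}
\mu^{\star}((\alpha_1,\,\dots,\,\alpha_r)+\mathfrak{m}_v^{e}\times\cdots\times \mathfrak{m}_v^{e})
=\sigma_1^{-de}\cdots\sigma_r^{-de}
+\sum_{j=i}^{r}\sigma_1^{-de}\cdots \sigma_{j-1}^{-de}\sigma_j^{-\deg \alpha_j}\cdots\sigma_r^{-\deg \alpha_r},
\end{equation*}
and the value $\sigma_1^{-de}\cdots\sigma_r^{-de}$, not $0$, when $C_r$ fails. Asserting that the cut-offs $j\in\{i,\,\dots,\,r\}$ alone ``yield the closed form'' silently discards this nonzero term.

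The omission cannot be argued away, because the statement it would rescue is itself false as printed: for $r=1$ the definitions give $\mu^\star=\mu$, and the non-star proposition (which the paper does prove, and which you are mirroring) gives $\mu(\alpha_1+\mathfrak{m}_v^e)=\sigma_1^{-\deg\alpha_1}+\sigma_1^{-de}$ for monic $\alpha_1$ and $\sigma_1^{-de}$ otherwise --- never $\sigma_1^{-\deg\alpha_1}$ alone, and never $0$. So what your argument actually proves is the corrected formula displayed above (equivalently, the stated sum with $j$ running up to $r+1$); a complete write-up should state this correction, or else justify the deletion of the all-$de$ term, rather than stop at the printed formula. Note also that the paper's own proof of the star integral expression tacitly uses the corrected value: the final Riemann-sum term $\zeta_{\infty}(-m_1)\cdots\zeta_{\infty}(-m_r)\sum_{f_1,\,\dots,\,f_r \in \mathbb{F}_q^\times}f_1^{m_1}\cdots f_r^{m_r}$ appearing there is exactly the contribution of the all-$de$ block, and it disappears only via the trivial-zero and character-sum identities, so nothing downstream is damaged by the correction.
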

We can generalize Theorem \ref{SingleIntegralExpression} to multivariable setting as follows:

\begin{theorem}\label{IntegralExpression}

For $m_1,\,\dots,\,m_r \in \mathbb{Z}_{\geq0}$,  the special value $\zeta_{\infty}(-m_1,\,\dots,-m_r)$ has the following integral expression:
\begin{equation*}
 \zeta_{\infty}(-m_1,\,\dots,-m_r)=\int_{A_v^r}x_1^{m_1}\cdots x_r^{m_r} \mu^{1,\,\dots,\,1},
\end{equation*}

and the special value $\zeta_{\infty}^\star (-m_1,\,\dots,-m_r)$ has the following integral expression:
\begin{equation*}
\zeta_{\infty}^\star(-m_1,\,\dots,-m_r)=\int_{A_v^r}x_1^{m_1}\cdots x_r^{m_r} \mu^{\star\ 1,\,\dots,\,1}.
\end{equation*}

\end{theorem}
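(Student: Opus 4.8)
The plan is to evaluate the integral directly from its definition as a limit of Riemann sums over residue cells modulo $\mathfrak{m}_v^e$, and to recognize, via the two preceding propositions, that in the limit only the ``diagonal'' contribution survives and reproduces the defining series of $\zeta_\infty(-m_1,\dots,-m_r)$. Concretely, I would take as representatives of $(A_v/\mathfrak{m}_v^e)^r$ the tuples $(\alpha_1,\dots,\alpha_r)$ of polynomials with $\deg\alpha_j<de$, so that
\begin{equation*}
\int_{A_v^r}x_1^{m_1}\cdots x_r^{m_r}\,\mu^{1,\dots,1}
=\lim_{e\to\infty}\sum_{\deg\alpha_j<de}\alpha_1^{m_1}\cdots\alpha_r^{m_r}\,
\mu^{1,\dots,1}\big((\alpha_1,\dots,\alpha_r)+\mathfrak{m}_v^e\times\cdots\times\mathfrak{m}_v^e\big),
\end{equation*}
the limit existing because $\mu$ is a measure and the integrand is continuous. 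The point is then to insert the explicit values of $\mu^{1,\dots,1}$ furnished by the proposition computing $\mu$, specialized to $\sigma_1=\cdots=\sigma_r=1$ so that every power $\sigma_j^{-(\cdot)}$ equals $1$.

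With $\sigma_j=1$ the proposition splits each cell weight into two kinds of contributions: the genuine ``main'' term $1$ occurring in case~(1), and a ``boundary'' term carrying a factor $\sigma_1^{-de}=1$ occurring in cases~(1) and~(2). First I would collect the main terms. These run over tuples with $\deg\alpha_1>\cdots>\deg\alpha_r$ and all $\alpha_j\in A_+$, so summing $\alpha_1^{m_1}\cdots\alpha_r^{m_r}$ over them regroups, for fixed degrees $N_1>\cdots>N_r$, into $S_{N_1}(-m_1)\cdots S_{N_r}(-m_r)$; hence the main part equals $\sum_{de>N_1>\cdots>N_r\ge 0}S_{N_1}(-m_1)\cdots S_{N_r}(-m_r)$. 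By Lemma~\ref{firstlemma} each factor $S_{N_j}(-m_j)$ vanishes once $N_j\ge m_j+1$, so for $e$ large enough the bound $N_j<de$ is vacuous and this sum is exactly $\zeta_\infty(-m_1,\dots,-m_r)$.

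It remains to show the boundary contribution vanishes for large $e$, and this is the crux. Collecting the boundary terms from cases~(1) and~(2), one checks that for each fixed choice of $\alpha_2,\dots,\alpha_r\in A_+$ with $\deg\alpha_2>\cdots>\deg\alpha_r$ the index $\alpha_1$ ranges over \emph{all} polynomials of degree $<de$: case~(1) supplies the monic $\alpha_1$ with $\deg\alpha_1>\deg\alpha_2$, and case~(2) supplies precisely the complementary $\alpha_1$. Thus the boundary part factors as
\begin{equation*}
\Bigg(\sum_{\deg\alpha_1<de}\alpha_1^{m_1}\Bigg)
\sum_{\substack{de>\deg\alpha_2>\cdots>\deg\alpha_r\ge 0\\ \alpha_j\in A_+}}\alpha_2^{m_2}\cdots\alpha_r^{m_r},
\end{equation*}
and the first factor vanishes once $e$ is large. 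Indeed, $\{\alpha\in A:\deg\alpha<de\}$ is an $\mathbb{F}_q$-vector space of dimension $de$, and expanding $\alpha^{m_1}$ in the coordinates of $\alpha=\sum_i c_i\theta^i$ and using $\sum_{c\in\mathbb{F}_q}c^k=0$ unless $k$ is a positive multiple of $q-1$ shows $\sum_{\deg\alpha<de}\alpha^{m_1}=0$ as soon as $m_1<de(q-1)$ (for $m_1=0$ the sum is $q^{de}=0$). This is the same averaging mechanism underlying Lemma~\ref{firstlemma}, now applied to the full vector space rather than to monic polynomials of a fixed degree, and it is the step I expect to require the most care. Combining the two computations gives the asserted identity.

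Finally, the star statement is proved along identical lines using the proposition computing $\mu^\star$: the single term carrying no factor $\sigma_k^{-de}$ (the term with index $j=1$, arising from tuples with $\deg\alpha_1\ge\cdots\ge\deg\alpha_r$ and all $\alpha_j\in A_+$) reassembles $\zeta_\infty^\star(-m_1,\dots,-m_r)$ via Lemma~\ref{firstlemma}, while every term containing a factor $\sigma_1^{-de}\cdots\sigma_{j-1}^{-de}$ with $j\ge 2$ leaves $\alpha_1,\dots,\alpha_{j-1}$ unconstrained and hence frees a summation $\sum_{\deg\alpha_1<de}\alpha_1^{m_1}$ that vanishes for $e$ large by the same vector-space averaging.
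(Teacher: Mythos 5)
Your proposal is correct, and its skeleton is the same as the paper's: expand the integral as a limit of Riemann sums, insert the explicit cell volumes from the two propositions at $\sigma_1=\cdots=\sigma_r=1$, recover the main term via Lemma \ref{firstlemma}, and kill the boundary term. The one genuine difference is the mechanism for that last step. The paper factors the boundary contribution as $\zeta_{\infty}(-m_2,\dots,-m_r)\,\zeta_{\infty}(-m_1)\sum_{f\in\mathbb{F}_q^\times}f^{m_1}$ and invokes the dichotomy that $\zeta_\infty(-m_1)=0$ when $(q-1)\mid m_1$ (the trivial zeros) while $\sum_{f\in\mathbb{F}_q^\times}f^{m_1}=0$ otherwise; you instead prove directly, by multinomial expansion over the $\mathbb{F}_q$-vector space $\{\alpha\in A:\deg\alpha<de\}$ and the character sum $\sum_{c\in\mathbb{F}_q}c^k=0$ unless $k$ is a positive multiple of $q-1$, that $\sum_{\deg\alpha<de}\alpha^{m_1}=0$ once $de(q-1)>m_1$. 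Your version buys two things: it is self-contained (it does not appeal to the trivial-zero fact, which the paper uses without proof), and it handles $m_1=0$ uniformly, where the paper's stated dichotomy is literally off — $\zeta_\infty(0)=1$ even though $(q-1)\mid 0$, and the paper's factorization there silently drops the $\alpha_1=0$ cell whose contribution $0^0=1$ is exactly what makes the boundary sum $q^{de}=0$. The paper's route, in exchange, is shorter on the page and makes visible the structural identity $\int_{A_v^r}=\zeta_\infty(-m_1,\dots,-m_r)+\zeta_\infty(-m_2,\dots,-m_r)\zeta_\infty(-m_1)\sum_f f^{m_1}$, which is the shape reused in the $v$-adic Theorem \ref{IntegralexpressionofStar}. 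Your reading of the star case (only the $j=1$ term of the $\mu^\star$ proposition survives; every $j\ge 2$ term frees an unconstrained $\alpha_1$-summation that vanishes) likewise matches the paper's computation, with the same substitution of vanishing mechanisms.
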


\begin{proof}
By definitions, we can calculate Riemann sum as follows:
\begin{align}
&\qquad \quad\int_{A_v^r}x_1^{m_1}\cdots x_r^{m_r} \mu^{1,\,\dots,\,1}\\
&=\lim_{e\to \infty} \left[2 \sum_{\substack{de>\deg n_1> \cdots > \deg n_r \\n_1,\,\dots,\,n_r \in A_+}}n_1^{m_1}\, \cdots\, n_r^{m_r}+\sum_{\substack{de>\deg n_2> \cdots > \deg n_{r}, \\  n_2,\,\dots,\,n_{r} \in A_+,\, de>\deg n_1,\\ \deg n_2 \geq \deg n_{1} \text{ or } n_1 \in A \setminus A_+}} n_1^{m_1}\, \cdots\, n_r^{m_r} \right]\\
&=\lim_{e\to \infty} \left[ \sum_{\substack{de>\deg n_1> \cdots > \deg n_r \\n_1,\,\dots,\,n_r \in A_+}}n_1^{m_1}\, \cdots\, n_r^{m_r}+
\sum_{\substack{de>\deg n_2> \cdots > \deg n_r, \\n_2,\,\dots,\,n_r \in A_+,\\ de>\deg n_1,\, n_1 \in A}} n_1^{m_1}\, \cdots\, n_r^{m_r} \right]\\
&=\zeta_{\infty}(-m_1,\,\dots,-m_r)+\zeta_{\infty}(-m_2,\,\dots,-m_r)\zeta_{\infty}(-m_1) \sum_{f\in\mathbb{F}_q^\times} f^{m_1}\\
&=\zeta_{\infty}(-m_1,\,\dots,-m_r).
\end{align}
To prove the last equality, we used the facts that $\zeta_{\infty}(-m_1)=0$ if $(q-1)|m_1$ and that $\sum_{f\in\mathbb{F}_q^\times} f^{m_1}=0$ if $(q-1)\nmid m_1$. 

The second assertion can be confirmed as follows:  
\begin{align}
\begin{autobreak}
\int_{A_v^r}x_1^{m_1}\cdots x_r^{m_r} \mu^{\star\ 1,\,\dots,\,1}
=\zeta_{\infty}^\star(-m_1,\,\dots,-m_r)
\ +\zeta_{\infty}^\star(-m_2,\,\dots,-s_{r})\zeta_{\infty}^\star(-m_1)\sum_{f_1\in\mathbb{F}_q^\times} f_1^{m_1}
\ +\zeta_{\infty}^\star(-m_2,\,\dots,-m_{r})\zeta_{\infty}^\star(-m_{1})\zeta_{\infty}^\star(-m_2)\sum_{f_{1}\,f_2 \in\mathbb{F}_q^\times} f_{1}^{m_1}f_2^{m_2}
\ +\cdots
\ +\zeta_{\infty}(-m_1)\cdots\zeta_{\infty}(-m_r)\sum_{f_1,\,\dots,\,f_r \in \mathbb{F}_q^\times}f_1^{m_1}\cdots f_r^{m_r}
=\zeta_\infty^\star(-m_1,\,\dots,-m_r).
\end{autobreak}
\end{align}

\end{proof}
Contrary to the case of $\infty$MZFs and $\infty$MZSFs, we can obtain integral expressions of $v$MZFs and of $v$MZSFs not just the expressions of special values:

\begin{theorem}\label{IntegralexpressionofStar}

Let $t_1,\,\dots,\,t_r \in {\mathbb{Z}_{d,\,p}}$ and $\sigma_1,\,\dots,\,\sigma_r \in A_v  \setminus \mathfrak{m}_v$. Then we have the following integral expressions of the value $\zeta_v((\sigma_1,\, -t_1),$ $\dots,$ $(\sigma_r,\,-t_r))$ of $v$MZF:
\begin{equation}
\zeta_v((\sigma_1,\, -t_1),\,\dots, (\sigma_r,\,-t_r))=\int_{(A_v  \setminus \mathfrak{m}_v)^r}x_1^{t_1}\cdots x_r^{t_r} \mu^{\sigma_1,\,\dots,\,\sigma_r}, \label{eqvintexpression}
\end{equation}

and of the value $\zeta_v^\star((\sigma_1,\, -t_1),$ $\dots,$ $(\sigma_r,\,-t_r))$of $v$MZSFs:
\begin{equation*}
\zeta_v^\star((\sigma_1,\, -t_1),\,\dots, (\sigma_r,\,-t_r))=\int_{(A_v  \setminus \mathfrak{m}_v)^r}x_1^{t_1}\cdots x_r^{t_r} \mu^{\star\ \sigma_1,\,\dots,\,\sigma_r}.
\end{equation*}

\end{theorem}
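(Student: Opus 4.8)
The plan is to evaluate the integral on the right-hand side of \eqref{eqvintexpression} as a limit of Riemann sums and match it, term by term, with the defining series of the $v$MZF, exactly as in the proof of Theorem \ref{IntegralExpression} but now over the restricted domain $(A_v\setminus\mathfrak{m}_v)^r$. First I would write
\[
\int_{(A_v\setminus\mathfrak{m}_v)^r}x_1^{t_1}\cdots x_r^{t_r}\,\mu^{\sigma_1,\dots,\sigma_r}=\lim_{e\to\infty}\sum_{\substack{\alpha_1,\dots,\alpha_r\in A,\ \deg\alpha_j<de\\ (\alpha_j,v)=1}}\alpha_1^{t_1}\cdots\alpha_r^{t_r}\,\mu^{\sigma_1,\dots,\sigma_r}\big((\alpha_1,\dots,\alpha_r)+\mathfrak{m}_v^e\times\cdots\times\mathfrak{m}_v^e\big)
\]
and substitute the values of $\mu$ on the boxes supplied by the preceding proposition. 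Since every $\alpha_j$ is a unit, only the configurations in cases (1) and (2) there contribute, so the Riemann sum splits into a \emph{main term} coming from the first summand in case (1) and an \emph{extra term} collecting the second summand in case (1) together with all of case (2).

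The main term is
\[
\sum_{de>i_1>\cdots>i_r\geq 0}\ \sum_{\substack{\alpha_j\in A_+,\ \deg\alpha_j=i_j\\(\alpha_j,v)=1}}\sigma_1^{-i_1}\cdots\sigma_r^{-i_r}\alpha_1^{t_1}\cdots\alpha_r^{t_r}=\sum_{de>i_1>\cdots>i_r\geq 0}\tilde{S}_{i_1}((\sigma_1,-t_1))\cdots\tilde{S}_{i_r}((\sigma_r,-t_r)),
\]
which converges to $\zeta_v((\sigma_1,-t_1),\dots,(\sigma_r,-t_r))$ as $e\to\infty$ by the definition of the $v$MZF and the convergence of its defining series.

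For the extra term I would fix $\alpha_2,\dots,\alpha_r\in A_+$ coprime to $v$ with $de>\deg\alpha_2>\cdots>\deg\alpha_r\geq 0$ and observe that, as $\alpha_1$ ranges over the second summand of case (1) together with all of case (2), it runs through \emph{every} $\alpha_1\in A$ with $\deg\alpha_1<de$ and $(\alpha_1,v)=1$. Hence the extra term factors as
\[
\sigma_1^{-de}\Big(\sum_{\substack{\alpha_1\in A,\ \deg\alpha_1<de\\(\alpha_1,v)=1}}\alpha_1^{t_1}\Big)\sum_{\substack{\alpha_2,\dots,\alpha_r\in A_+\ \text{coprime to }v\\ de>\deg\alpha_2>\cdots>\deg\alpha_r\geq 0}}\sigma_2^{-\deg\alpha_2}\cdots\sigma_r^{-\deg\alpha_r}\alpha_2^{t_2}\cdots\alpha_r^{t_r}.
\]
The second factor converges to $\zeta_v((\sigma_2,-t_2),\dots,(\sigma_r,-t_r))$, hence is bounded, so the whole extra term tends to $0$ as soon as $\sigma_1^{-de}\sum_{\deg\alpha_1<de,\,(\alpha_1,v)=1}\alpha_1^{t_1}\to 0$ in $A_v$. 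This last vanishing is the main obstacle, and I would extract it from Goss' single-variable Theorem \ref{SingleIntegralExpression}(2): the same depth-one Riemann-sum decomposition shows that $\int_{A_v\setminus\mathfrak{m}_v}x^{t_1}\mu^{\sigma_1}$ equals $\sum_{j<de}\tilde{S}_j((\sigma_1,-t_1))$ plus exactly $\sigma_1^{-de}\sum_{\deg\alpha_1<de,\,(\alpha_1,v)=1}\alpha_1^{t_1}$; since the left-hand side is $\zeta_v((\sigma_1,-t_1))$ and the first part already converges to it, the second part must converge to $0$. (Alternatively, factoring out the leading coefficient of $\alpha_1$ rewrites the inner sum as $\big(\sum_{c\in\mathbb{F}_q^\times}c^{t_1}\big)\sum_{\beta\ \mathrm{monic}}\beta^{t_1}$, and the Fermat-type factor already vanishes whenever $(q-1)\nmid t_1$.) Combining the two pieces proves the integral expression for the $v$MZF and simultaneously shows the Riemann sums converge.

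Finally, the star identity is obtained in the same way, replacing the proposition on $\mu$ by its counterpart for $\mu^\star$: the configuration $C_1$ (all $\alpha_j$ monic, coprime to $v$, with weakly decreasing degrees) yields the main term converging to $\zeta_v^\star((\sigma_1,-t_1),\dots,(\sigma_r,-t_r))$, while every remaining summand carries at least one factor $\sigma_k^{-de}$ accompanied by an unrestricted sum over the corresponding $\alpha_k$, producing a factor of the shape $\sigma_k^{-de}\sum_{\deg\alpha<de,\,(\alpha,v)=1}\alpha^{t_k}$ that tends to $0$ by the same single-variable input; the surviving factors converge to lower-depth star values and are bounded. This mirrors the second half of the proof of Theorem \ref{IntegralExpression} and completes the argument.
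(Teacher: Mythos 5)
Your proposal is correct, and its skeleton is the same as the paper's: evaluate the integral as a limit of Riemann sums over boxes of radius $\mathfrak{m}_v^e$, plug in the propositions computing $\mu$ and $\mu^\star$ on boxes, and split into a main term (the partial sums of the defining series, which converge to $\zeta_v$, resp.\ $\zeta_v^\star$) plus extra terms carrying a factor $\sigma_1^{-de}$ times an unrestricted sum over $\alpha_1$. The genuine difference is how the extra term is killed. The paper first reduces to $t_1,\,\dots,\,t_r \in \mathbb{N}$ using density of $-\mathbb{N}$ in $\mathbb{Z}_{d,\,p}$ (and continuity of both sides in the $t_j$), after which the extra term is $\zeta_v((\sigma_2,\,-t_2),\,\dots,\,(\sigma_r,\,-t_r))\cdot\sigma_1^{-de}\zeta_v(-t_1)\sum_{f\in\mathbb{F}_q^\times}f^{t_1}$, and the product $\zeta_v(-t_1)\sum_f f^{t_1}$ vanishes by the dichotomy: the Fermat factor is zero when $(q-1)\nmid t_1$, and the trivial zero $\zeta_v(-t_1)=0$ handles $(q-1)\mid t_1$. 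You instead work with arbitrary $t_1 \in \mathbb{Z}_{d,\,p}$ and deduce $\sigma_1^{-de}\sum_{\deg\alpha_1<de,\,(\alpha_1,v)=1}\alpha_1^{t_1}\to 0$ from Goss' single-variable result (Theorem \ref{SingleIntegralExpression}(2)): the depth-one Riemann sums and the depth-one partial sums both converge to $\zeta_v((\sigma_1,\,-t_1))$, so their difference, which is exactly this quantity, tends to zero. That substitution is legitimate and not circular --- the single-variable theorem is quoted as an external input, and your bounded lower-depth factor uses only convergence of the defining series, never a lower-depth case of the theorem being proved --- and it buys you freedom from the interpolation/density step; the paper's route pays for that step but needs only the elementary Fermat and trivial-zero facts. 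One caveat: your parenthetical alternative (the Fermat factor vanishes when $(q-1)\nmid t_1$) is incomplete as it stands, since for $(q-1)\mid t_1$ that factor equals $-1$ and you would still need the accompanying monic sum (a partial sum of $\zeta_v(-t_1)$, zero by the trivial zero, extended by continuity to $\mathbb{Z}_{d,\,p}$) to vanish; but as you offer this only as an aside, your main argument carries the proof in all cases, for both $\mu$ and $\mu^\star$.
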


\begin{proof}
For the proof of the assertion, we can assume that $t_1,\,\dots,\,t_r \in \mathbb{N}$ since $-\mathbb{N}$ is dense in ${\mathbb{Z}_{d,\,p}}$. Then our desired equality is obtained as follows:
\begin{align}
&\int_{(A_v  \setminus \mathfrak{m}_v)^r}x_1^{t_1}\cdots x_r^{t_r} \mu^{\sigma_1,\,\dots,\,\sigma_r}\\
=&
\begin{autobreak}\lim_{e \to \infty} \left\{ \sum_{\substack{de>\deg n_1> \cdots > \deg n_r  \\n_1,\,\dots,\,n_r \in A_+\\(n_1,\,v)=\cdots=(n_r,\,v)=1}} \sigma_1^{-\deg n_1}\cdots \sigma_r^{-\deg n_r} n_1^{t_1}\, \cdots\, n_r^{t_r} \right.
+ \left. \sum_{\substack{de>\deg n_1> \cdots > \deg n_r \\n_1,\,\dots,\,n_r \in A_+\\(n_1,\,v)=\cdots=(n_r,\,v)=1}}\sigma_1^{-de}\sigma_2^{-\deg n_2}\cdots \sigma_{r}^{-\deg n_{r}} n_1^{t_1}\, \cdots\, n_r^{t_r} \right.
+ \left. \sum_{\substack{de>\deg n_2 > \cdots > \deg n_r, \\n_2,\,\dots,\,n_{r-1} \in A_+,\, \deg n_1<de \\ \deg n_2\geq \deg n_1 \text{ or } n_1 \in A \setminus A_+\\(n_1,\,v)=\cdots=(n_r,\,v)=1}}\sigma_1^{-de}\sigma_2^{-\deg n_2}\cdots \sigma_{r}^{-\deg n_{r}} n_1^{t_1}\, \cdots\, n_r^{t_r} \right\}
\end{autobreak}\\
=&\begin{autobreak}\zeta_v((\sigma_1,\, -t_1),\,\dots, (\sigma_r,\,-t_r))+\zeta_v((\sigma_2,\, -t_2),\,\dots, (\sigma_r,\,-t_r))
\cdot\lim_{e\to\infty} \left\{\sigma_1^{-de} \zeta_v(-t_1) \right\}\sum_{f\in\mathbb{F}_q^\times} f^{t_1}=\zeta_v((\sigma_1,\, -t_1),\,\dots, (\sigma_r,\,-t_r)).
\end{autobreak}
\end{align}

The integral expression of $v$MZSVs are confirmed in the same way.
\end{proof}
\subsection{Multiple Kummer-type congruence}\label{Multiple Kummer-type congruence}

We obtain some congruence in Theorem \ref{KummerType} between values of $v$MZFs by the integral expression of Theorem \ref{IntegralexpressionofStar} which are analogous to \cite[Theorem 2.10]{Furusho2017b}.

\begin{lemma}\label{Fermat type congruences}
If we take $m,\,l \in \mathbb{Z}$ such that $m \equiv l  \mod (q^d-1)q^{(e-1)d}$ for $ e\in \mathbb{N}$, then we have the congruence $x^m \equiv x^l \mod \mathfrak{m}_v^e$ for all $x \in A_v  \setminus \mathfrak{m}_v$.
\end{lemma}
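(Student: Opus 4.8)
The plan is to reduce the asserted congruence $x^m \equiv x^l \pmod{\mathfrak{m}_v^e}$ to the single statement $x^{m-l}\equiv 1 \pmod{\mathfrak{m}_v^e}$ for every $x \in A_v \setminus \mathfrak{m}_v$. Indeed, since $A_v$ is local with maximal ideal $\mathfrak{m}_v$, every such $x$ is a unit, and the identity $x^{m}-x^{l}=x^{l}(x^{m-l}-1)$ shows that $x^{m-l}\equiv 1 \pmod{\mathfrak{m}_v^e}$ forces $x^m\equiv x^l \pmod{\mathfrak{m}_v^e}$ after multiplying by the unit $x^l$. Moreover, by completeness the principal units $1+\mathfrak{m}_v^e$ form a multiplicative subgroup of $A_v^\times$, so it suffices to treat the generating exponent: writing $m-l = N(q^d-1)q^{(e-1)d}$ with $N \in \mathbb{Z}$, it is enough to prove $x^{(q^d-1)q^{(e-1)d}}\in 1+\mathfrak{m}_v^e$, after which raising to the $N$-th power (positive, zero, or negative) remains inside the group.

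The core of the argument then splits the exponent $(q^d-1)q^{(e-1)d}$ into its two factors. First I would invoke Fermat's little theorem in the residue field $A_v/\mathfrak{m}_v \cong \mathbb{F}_{q^d}$: the reduction of $x$ lies in $\mathbb{F}_{q^d}^\times$, so $x^{q^d-1}\equiv 1 \pmod{\mathfrak{m}_v}$, that is, $x^{q^d-1}=1+w$ with $w\in\mathfrak{m}_v$. Next I would use that $A_v$ has characteristic $p$, so that, writing $q=p^f$, the $q^{(e-1)d}$-th power map is the $fd(e-1)$-fold Frobenius and is therefore additive; hence $(1+w)^{q^{(e-1)d}}=1+w^{q^{(e-1)d}}$.

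It then remains to estimate the valuation of $w^{q^{(e-1)d}}$. Since $w\in\mathfrak{m}_v$ has valuation at least $1$, the element $w^{q^{(e-1)d}}$ has valuation at least $q^{(e-1)d}$, so the only point to verify is the elementary inequality $q^{(e-1)d}\geq e$, which holds for all $e\geq 1$ because $q\geq 2$ and $d\geq 1$ give $q^{(e-1)d}\geq 2^{e-1}\geq e$. Therefore $w^{q^{(e-1)d}}\in\mathfrak{m}_v^e$ and $x^{(q^d-1)q^{(e-1)d}}=1+w^{q^{(e-1)d}}\equiv 1 \pmod{\mathfrak{m}_v^e}$, completing the reduction. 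I do not expect a serious obstacle: the one genuine input is the additivity of the $q$-power map in characteristic $p$, and the argument really hinges on recording this Frobenius identity together with the numerical bound $q^{(e-1)d}\ge e$ that turns the exponent $q^{(e-1)d}$ into vanishing modulo $\mathfrak{m}_v^e$.
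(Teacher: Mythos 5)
Your proof is correct, but it takes a genuinely different route from the paper's. The paper's entire proof is one line: it notes that $\#(A_v/\mathfrak{m}_v^e)^\times=\#(A/\mathfrak{m}_v^e)^\times=(q^d-1)q^{(e-1)d}$, so the congruence is simply Lagrange's theorem applied to the image of the unit $x$ in the finite group $(A_v/\mathfrak{m}_v^e)^\times$ (integer exponents of either sign being harmless inside a group). You instead factor the exponent: Fermat in the residue field $A_v/\mathfrak{m}_v\cong\mathbb{F}_{q^d}$ gives $x^{q^d-1}=1+w$ with $w\in\mathfrak{m}_v$, additivity of the $q$-power Frobenius in characteristic $p$ gives $(1+w)^{q^{(e-1)d}}=1+w^{q^{(e-1)d}}$, and the valuation bound $q^{(e-1)d}\geq e$ finishes. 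Both arguments are sound, including your reduction steps (multiplying by the unit $x^l$, and closure of $1+\mathfrak{m}_v^e$ under integer powers). What the paper's approach buys is brevity and a conceptual explanation of the modulus $(q^d-1)q^{(e-1)d}$ as a group order; what yours buys is a strictly stronger conclusion --- you actually prove $x^{(q^d-1)q^{(e-1)d}}\equiv 1 \pmod{\mathfrak{m}_v^{q^{(e-1)d}}}$, a far deeper congruence than modulo $\mathfrak{m}_v^e$ --- and it makes visible that the positive-characteristic Frobenius is what trivializes the estimate, in contrast with the classical $p$-adic setting where the analogous statement about $(1+pw)^{p^{e-1}}$ requires genuine binomial bookkeeping.
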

This follows from the fact that $\# (A_v/\mathfrak{m}_v^e)^\times=\# (A/\mathfrak{m}_v^e)^\times=(q^d-1)q^{(e-1)d}$.	

\begin{theorem}\label{KummerType}
For indices $(m_1,\,\dots,\,m_r) ,\,(l_1,\,\dots,\,l_r) \in \mathbb{Z}^r$ with $m_i \equiv l_i   \mod (q^d-1)q^{(e-1)d}$ for all $1\leq i \leq r$, the following congruence hold:
\begin{align}
\zeta_v(m_1,\,\dots,\,m_r) \equiv \zeta_v(l_1,\,\dots,\,l_r) \mod \mathfrak{m}_v^e,\\
\zeta_v^\star(m_1,\,\dots,\,m_r) \equiv \zeta_v^\star(l_1,\,\dots,\,l_r) \mod \mathfrak{m}_v^e.
\end{align}
\end{theorem}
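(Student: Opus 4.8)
The plan is to derive the congruence directly from the integral expression of Theorem \ref{IntegralexpressionofStar}, exactly as Kummer-type congruences are obtained from $p$-adic integral representations in \cite{Furusho2017b}. First I would rewrite both sides as integrals against the \emph{single} $A_v$-valued measure $\mu^{1,\,\dots,\,1}$. Viewing the integer $m_j$ as the element $(1,\,m_j)\in X_v$ and specializing $\sigma_j=1$, $t_j=-m_j$ in Theorem \ref{IntegralexpressionofStar} gives
\[
\zeta_v(m_1,\,\dots,\,m_r)=\int_{(A_v\setminus\mathfrak{m}_v)^r}x_1^{-m_1}\cdots x_r^{-m_r}\,\mu^{1,\,\dots,\,1},
\]
and similarly for $(l_1,\,\dots,\,l_r)$ and for the star versions with $\mu^{\star\,1,\,\dots,\,1}$. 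Because each $x_j$ ranges over the units $A_v\setminus\mathfrak{m}_v$, the integrands are $A_v$-valued, so both special values already lie in $A_v$ and the asserted congruence is meaningful.

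Next I would reduce to a pointwise congruence of integrands. From $m_j\equiv l_j \pmod{(q^d-1)q^{(e-1)d}}$ we get $-m_j\equiv -l_j$ modulo the same integer, so Lemma \ref{Fermat type congruences} yields $x^{-m_j}\equiv x^{-l_j}\pmod{\mathfrak{m}_v^e}$ for every $x\in A_v\setminus\mathfrak{m}_v$. The telescoping identity
\[
\prod_{j=1}^r x_j^{-m_j}-\prod_{j=1}^r x_j^{-l_j}=\sum_{k=1}^r\Big(\prod_{j<k}x_j^{-l_j}\Big)\big(x_k^{-m_k}-x_k^{-l_k}\big)\Big(\prod_{j>k}x_j^{-m_j}\Big)
\]
then shows that the two integrands differ by an element of $\mathfrak{m}_v^e$ at every point of $(A_v\setminus\mathfrak{m}_v)^r$, since each summand contains a factor $x_k^{-m_k}-x_k^{-l_k}\in\mathfrak{m}_v^e$ while the remaining factors are units in $A_v$.

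Finally I would integrate. The integral $\int f\,\mu^{1,\,\dots,\,1}$ is by construction a limit of Riemann sums $\sum_U f(x_U)\,\mu^{1,\,\dots,\,1}(U)$; when $f$ takes values in $\mathfrak{m}_v^e$ and $\mu^{1,\,\dots,\,1}$ is $A_v$-valued, every such sum lies in the ideal $\mathfrak{m}_v^e$, and since $\mathfrak{m}_v^e$ is closed in $A_v$ the limit does too. Applying this to $f=\prod_j x_j^{-m_j}-\prod_j x_j^{-l_j}$ gives $\zeta_v(m_1,\,\dots,\,m_r)\equiv\zeta_v(l_1,\,\dots,\,l_r)\pmod{\mathfrak{m}_v^e}$, and the identical argument with $\mu^{\star\,1,\,\dots,\,1}$ handles the star case. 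The one point requiring care---and the only real content beyond bookkeeping---is this last step: that integration against an $A_v$-valued measure preserves the congruence modulo $\mathfrak{m}_v^e$, which rests on the closedness of $\mathfrak{m}_v^e$ and the uniform approximation of the integral by Riemann sums. Everything else is a direct application of Theorem \ref{IntegralexpressionofStar} together with Lemma \ref{Fermat type congruences}.
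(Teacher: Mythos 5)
Your proposal is correct and takes essentially the same route as the paper: the paper's proof likewise combines the integral expression of Theorem \ref{IntegralexpressionofStar} (with $\sigma_j=1$) with Lemma \ref{Fermat type congruences} and the $A_v$-valuedness of $\mu$ to conclude that the integral of the difference of the two monomial integrands lies in $\mathfrak{m}_v^e$. You merely make explicit some points the paper leaves implicit, namely the sign convention $t_j=-m_j$, the telescoping identity showing the integrands agree modulo $\mathfrak{m}_v^e$, and the Riemann-sum/closedness argument for why integration preserves the congruence.
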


\begin{proof}
By Lemma \ref{Fermat type congruences}, the function $x_1^{m_1}\cdots x_r^{m_r}-x_1^{l_1}\cdots x_r^{l_r}$ on $(A_v  \setminus \mathfrak{m}_v)^r$ has values in $\mathfrak{m}_v^e$. Since $\mu$ is $A_v$-valued, we have the formula
\begin{equation*}
\int_{(A_v  \setminus \mathfrak{m}_v)^r} (x_1^{m_1}\cdots x_r^{m_r}-x_1^{l_1}\cdots x_r^{l_r})   \mu \in \mathfrak{m}_v^e,
\end{equation*}
which yields the desired congruence $\zeta_v(m_1,\,\dots,\,m_r) \equiv \zeta_v(l_1,\,\dots,\,l_r)$. The congruence for special values of the $v$MZSV is proven by the same manner.
\end{proof}
\begin{example}
(1). For $m,\,l \in \mathbb{Z}$ such that $m \equiv l  \mod (q^d-1)q^{(e-1)d}$, we have 
\begin{equation}
    \zeta_v(m)\equiv \zeta_v(l) \mod m_v^e.
\end{equation}
If we take $m_1,\,m_2,\,l_1,\,l_2 \in \mathbb{Z}$ such that $m_1 \equiv l_1  \mod (q^d-1)q^{(e-1)d}$ and $m_2 \equiv l_2  \mod (q^d-1)q^{(e-1)d}$, then we have 
\begin{equation}\zeta_v(m_1,\,m_2)\equiv \zeta_v(l_1,\,l_2)  \text{ and } \zeta_v^\star(m_1,\,m_2)\equiv \zeta_v^\star(l_1.\,l_2) \mod m_v^e. \end{equation}
\end{example}
\section{Inductive formulae for special values of multiple zeta functions at negative integers}\label{SectionINductiveFormulae}
In subsection \ref{4.1}, we obtain recursive formula for special values of $\infty$MZFs at negative integers, which enable us to write down the special values in terms of those with simpler arguments. We also obtain similar formulae for the special values of $v$MZFs in subsection \ref{4.2}. Comparing these two results, we obtain a relationship between special values of $\infty$MZFs and those of $v$MZFs in subsection \ref{4.3}.
\subsection{Inductive formulae for special values of $\infty$MZFs at negative integers}\label{4.1}

We obtain recursive formula for special values of $\infty$MZFs at negative integers, which enable us to write down the special values in terms of those with simpler arguments. Similar results for star versions are also shown.

For $d\geq1$ and for $m_1,\,\dots,\,m_r \in \mathbb{Z}$, we write

\begin{align}
\zeta_{<d}(m_1,\,\dots,\,m_r)\coloneqq \sum_{d>i_1>\cdots>i_r\geq0}S_{i_1}(m_1)\cdots S_{i_r}(m_r),\\
\zeta_{< d}^\star(m_1,\,\dots,\,m_r)\coloneqq \sum_{d > i_1 \geq \cdots \geq i_r\geq0}S_{i_1}(m_1)\cdots S_{i_r}(m_r).
\end{align}
If $r\geq d$ we put $\zeta_{<d}(m_1,\,\dots,\,m_r)\coloneqq 0$.

Using these notations, we can write down special values of $\infty$MZFs and $\infty$MZSFs at negative integers in terms of simpler ones:
\begin{theorem}\label{recursiveformulae}
Let $m_1\,\dots,\,m_r \in \mathbb{N}_{\geq0}$ and let $v$ be a monic irreducible polynomial with degree $d$. Then we have the equalities
\begin{align}
& \quad \zeta_{\infty}(-m_1,\,\dots,\,-m_r)\label{inftyInductive}
\\
&=\sum_{l=0}^{r}\Biggr\{(-1)^l \zeta_{<d}(-m_{l+1},\,\dots,\,-m_r)\sum_{\substack{0\leq a_j <m_j\\(q-1)|(m_j-a_j)\\(\text{for }1\leq j \leq l)}} v^{a_1+\cdots+a_l} \zeta_{<d}(a_1-m_1) \cdots\\
&\quad  \cdots \zeta_{<d}(a_l-m_l)\cdot \dbinom{m_1}{a_1}\cdots \dbinom{m_l}{a_l} \cdot \zeta_{\infty}(-a_1,\,\dots,\,-a_l)\Biggr \}, 
\end{align}
and
\begin{align}
& \quad \zeta_{\infty}^\star(-m_1,\,\dots,\,-m_r) \label{inftystarrec}\\
&=\sum_{l=0}^{r}\Biggr\{ (-1)^l  \zeta_{< d}^\star (-m_{l+1},\,\dots,\,-m_r)\sum_{\substack{0\leq a_j <m_j\\(q-1)|(m_j-a_j)\\(\text{for }1\leq j \leq l)}}v^{a_1+\cdots+a_l} \zeta_{<d}(a_1-m_1) \cdots\\
&\quad \cdots \zeta_{<d}(a_l-m_l)\cdot \dbinom{m_1}{a_1}\cdots \dbinom{m_l}{a_l} \cdot \zeta_{\infty}^\star(-a_1,\,\dots,\,-a_l)\Biggr \}, \label{inftystarInductive}
\end{align}

 If $m_j=0$, then the sums for $0\leq a_j <m_j$ are interpreted as empty sums. The summands with $l=0$ are $\zeta_{< d}^\star (-m_{1},\,\dots,\,-m_r)$ and $\zeta_{< d}^\star (-m_{1},\,\dots,\,-m_r)$, respectively.
\end{theorem}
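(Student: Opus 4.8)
The plan is to reduce everything to a single-variable identity that rewrites the power sum $S_{i+d}(-m)$ in terms of the sums $S_i(-a)$ with $a<m$, and then to feed this identity into a decomposition of the $r$-fold sum according to how many of the leading indices exceed $d$.

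First I would establish the key identity: for $m\in\mathbb{N}_{\geq0}$ and $i\geq 0$,
\begin{equation*}
S_{i+d}(-m)=-\sum_{\substack{0\leq a<m\\ (q-1)\mid(m-a)}}\binom{m}{a}v^{a}\,\zeta_{<d}(a-m)\,S_i(-a).
\end{equation*}
The input is Euclidean division by $v$: every monic $n$ of degree $i+d$ is uniquely $n=vN+\rho$ with $N\in A_+$ of degree $i$ and $\deg\rho<d$, and conversely, so that $S_{i+d}(-m)=\sum_{\deg N=i,\,N\in A_+}\sum_{\deg\rho<d}(vN+\rho)^m$. Expanding $(vN+\rho)^m$ by the binomial theorem and separating the two variables leaves the factor $\sum_{\deg\rho<d}\rho^{\,m-a}$; writing each nonzero $\rho$ as its leading coefficient times a monic polynomial shows that this factor equals $\bigl(\sum_{c\in\mathbb{F}_q^\times}c^{\,m-a}\bigr)\zeta_{<d}(a-m)$ for $a<m$, hence $-\zeta_{<d}(a-m)$ when $(q-1)\mid(m-a)$ and $0$ otherwise, while the term $a=m$ drops out because $\#\{\rho:\deg\rho<d\}=q^{d}\equiv 0$ in characteristic $p$. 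The surviving $N$-sum is $S_i(-a)$, which yields the identity.

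Next I would split the defining series. Grouping the tuples $i_1>\cdots>i_r\geq 0$ by the number $l$ of indices that are $\geq d$ decouples the constraint into $i_1>\cdots>i_l\geq d$ and $d>i_{l+1}>\cdots>i_r\geq 0$, so that
\begin{equation*}
\zeta_\infty(-m_1,\dots,-m_r)=\sum_{l=0}^{r}\Big(\sum_{i_1>\cdots>i_l\geq d}\prod_{j=1}^{l}S_{i_j}(-m_j)\Big)\,\zeta_{<d}(-m_{l+1},\dots,-m_r).
\end{equation*}
Shifting each index by $d$ (so $i_j=i_j'+d$ with $i_1'>\cdots>i_l'\geq 0$) and applying the key identity to each of the $l$ factors pulls the $l$ copies of the sign to $(-1)^l$, factors out $v^{a_1+\cdots+a_l}\prod_j\binom{m_j}{a_j}\zeta_{<d}(a_j-m_j)$, and leaves the inner sum $\sum_{i_1'>\cdots>i_l'\geq 0}\prod_j S_{i_j'}(-a_j)=\zeta_\infty(-a_1,\dots,-a_l)$, which is exactly \eqref{inftyInductive} (with the $l=0$ factor being the empty product $1$ and the $l=r$ factor $\zeta_{<d}$ of the empty tuple equal to $1$). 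For the star identity the same decomposition applies verbatim with every strict inequality replaced by a weak one; the single-variable identity is unchanged and the inner sum becomes $\zeta_\infty^\star(-a_1,\dots,-a_l)$, giving \eqref{inftystarInductive}.

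The main obstacle is the bookkeeping in the single-variable identity — specifically, correctly tracking the arithmetic factor $\sum_{c\in\mathbb{F}_q^\times}c^{\,m-a}$, which is what forces the divisibility condition $(q-1)\mid(m-a)$ and supplies the sign, together with the verification that the term $a=m$ vanishes because $q^{d}$ is divisible by $p$. Once this identity is in place, the degree decomposition and the index shift are routine, and the star case introduces no new ideas.
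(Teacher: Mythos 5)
Your proposal is correct and follows essentially the same route as the paper: the decomposition by the number $l$ of indices $\geq d$, the Euclidean division $n = vh+\alpha$, the binomial expansion, the vanishing of the $a=m$ term via $q^d=0$, and the $\sum_{c\in\mathbb{F}_q^\times}c^{m-a}$ factor producing the sign and the $(q-1)\mid(m-a)$ condition are all exactly the paper's steps. The only difference is organizational — you package the division-and-expansion step as a single-variable identity for $S_{i+d}(-m)$ and apply it factor by factor, whereas the paper performs the same substitution inline within the $r$-fold sum — which is a clean (and arguably tidier) modularization of the identical argument.
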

The right hand sides of \eqref{inftyInductive} and \eqref{inftystarrec} do not depend on the choice of monic prime element $v$ of degree $d$.

\begin{proof}
We have
\begin{align}
&\zeta_{\infty}(-m_1,\,\dots,\,-m_r)\\
=&\sum_{l=0}^{r} \left[\sum_{ i_1> \cdots>i_l \geq d > i_{l+1}>\cdots>i_r\geq0 } \left\{\sum_{\substack{\deg n_j= i_j\\n_j \in A_+\\(\text{for } r \geq j \geq 1)}}n_1^{m_1}\cdots n_r^{m_r}\right\}\right]\\
=&\sum_{l=0}^{r} \left[\zeta_{<d}(-m_{l+1},\,\dots,\,-m_r) \sum_{i_1>\cdots>i_l\geq d} \left\{\sum_{\substack{\deg n_j= i_j\\n_j \in A_+\\(\text{for }1\leq j \leq l)}}n_1^{m_1}\cdots n_l^{m_l}\right\}\right]\\
\intertext{putting $n_j=vh_j+\alpha_j$,}
=&\sum_{l=0}^{r} \left[\zeta_{<d}(-m_{l+1},\,\dots,\,-m_r) \sum_{i_1>\cdots>i_l\geq d} \left\{\sum_{\substack{\deg h_j= i_j-d\\h_j \in A_+\\ \deg \alpha_j<d \\(\text{for }1\leq j \leq l)}}(vh_1+\alpha_1)^{m_1}\cdots (vh_l+\alpha_l)^{m_l}\right\}\right]\\
\intertext{by expanding each $(vh_j+\alpha_j)^{m_j}$,}
=&\sum_{l=0}^{r} \Biggr[\zeta_{<d}(-m_{l+1},\,\dots,\,-m_r) \sum_{i_1>\cdots>i_l\geq d} \Biggr\{\sum_{\substack{\deg h_j= i_j-d\\h_j \in A_+\\ \deg \alpha_j<d \\(\text{for }1\leq j \leq l)}}
\Biggr(\sum_{\substack{0\leq a_j\leq m_j\\(\text{for }1\leq j \leq l)}} (vh_1)^{a_1}\cdots \\ &\cdots(vh_l)^{a_l}\cdot \alpha_1^{m_1-a_1}\cdots \alpha_l^{m_l-a_l}\cdot\dbinom{m_1}{a_1}\cdots \dbinom{m_l}{a_l}
\Biggr)\Biggr\}\Biggr]\\
\intertext{since  $\sum_{\alpha_j}\alpha_j^0=\sum_{\alpha_j}1=q^{d}=0$,}
=&\sum_{l=0}^{r} \Biggr[\zeta_{<d}(-m_{l+1},\,\dots,\,-m_r) \sum_{i_1>\cdots>i_l\geq d} \Biggr\{\sum_{\substack{\deg h_j= i_j-d\\h_j \in A_+\\ \deg \alpha_j<d \\(\text{for }1\leq j \leq l)}}
\Biggr(\sum_{\substack{0\leq a_j< m_j\\(\text{for }1\leq j \leq l)}} (vh_1)^{a_1}\cdots \\ &\cdots(vh_l)^{a_l}\cdot \alpha_1^{m_1-a_1}\cdots \alpha_l^{m_l-a_l}\cdot\dbinom{m_1}{a_1}\cdots\dbinom{m_l}{a_l}
\Biggr)\Biggr\}\Biggr]\\
=&\sum_{l=0}^{r}\Biggr\{\zeta_{<d}(-m_{l+1},\,\dots,\,-m_r)\sum_{\substack{0\leq a_j <m_j\\\deg \alpha_j<d\\(\text{for }1\leq j \leq l)}}v^{a_1+\cdots+a_l} \alpha_1^{m_1-a_1}\cdots \alpha_l^{m_l-a_l}\cdot
\dbinom{m_1}{a_1}\cdots \\
&\,\cdots \dbinom{m_l}{a_l} \cdot \zeta_{\infty}(-a_1,\,\dots,\,-a_l)\Biggr \} \\
=&\sum_{l=0}^{r}\Biggr\{\zeta_{<d}(-m_{l+1},\,\dots,\,-m_r)\sum_{\substack{0\leq a_j <m_j\\f_j \in \mathbb{F}_q^\times \\(\text{for }1\leq j \leq l)}}v^{a_1+\cdots+a_l} \zeta_{<d}(a_1-m_1) \cdots \zeta_{<d}(a_l-m_l)\cdot \\ 
&\qquad \quad \cdot f_1^{m_1-a_1}\cdots f_l^{m_l-a_l} \cdot \dbinom{m_1}{a_1}\,\cdots \dbinom{m_l}{a_l} \cdot \zeta_{\infty}(-a_1,\,\dots,\,-a_l)\Biggr \} \\
=&\sum_{l=0}^{r}\Biggr\{\zeta_{<d}(-m_{l+1},\,\dots,\,-m_r)\sum_{\substack{0\leq a_j <m_j\\(q-1)|(m_j-a_j)\\(\text{for }1\leq j \leq l)}}(-1)^l v^{a_1+\cdots+a_l} \zeta_{<d}(a_1-m_1)\cdots \\
&\qquad \quad \cdots \zeta_{<d}(a_l-m_l)\cdot \dbinom{m_1}{a_1}\cdots \dbinom{m_l}{a_l} \cdot \zeta_{\infty}(-a_1,\,\dots,\,-a_l)\Biggr \},
\end{align}
here, the last equality follows from the fact that $\sum_{f\in\mathbb{F}_q^\times} f^{m}=0$ for $m$ not divisible by $(q-1)$. 
The second formula can be confirmed as follows:
\begin{align}
& \quad \ \zeta_{\infty}^\star(-m_1,\,\dots,\,-m_r)\\
&=\sum_{l=0}^{r} \left[\sum_{ i_1\geq  \cdots \geq i_l \geq d > i_{l+1}\geq \cdots \geq i_r\geq0 } \left\{\sum_{\substack{\deg n_j= i_j\\n_j \in A_+\\(\text{for } r \geq j \geq 1)}}n_1^{m_1}\cdots n_r^{m_r}\right\}\right]\\
&=\sum_{l=0}^{r} \left[\zeta_{< d}^\star (-m_{l+1},\,\dots,\,-m_r) \sum_{i_1 \geq \cdots \geq i_l\geq d} \left\{\sum_{\substack{\deg n_j= i_j\\n_j \in A_+\\(\text{for }1\leq j \leq l)}}n_1^{m_1}\cdots n_l^{m_l}\right\}\right]\\
\intertext{putting $n_j=vh_j+\alpha_j$,}
&=\sum_{l=0}^{r} \left[\zeta_{< d}^\star (-m_{l+1},\dots,-m_r) \sum_{i_1 \geq \cdots \geq i_l\geq d} \left\{\sum_{\substack{\deg h_j= i_j-d\\h_j \in A_+\\ \deg \alpha_j<d \\(\text{for }1\leq j \leq l)}}(vh_1+\alpha_1)^{m_1}\cdots (vh_l+\alpha_l)^{m_l}\right\}\right]\\
&=\sum_{l=0}^{r} \Biggr[\zeta_{< d}^\star (-m_{l+1},\,\dots,\,-m_r) \sum_{i_1 \geq \cdots \geq i_l\geq d} \Biggr\{\sum_{\substack{\deg h_j= i_j-d\\h_j \in A_+\\ \deg \alpha_j<d \\(\text{for }1\leq j \leq l)}}
\Biggr(\sum_{\substack{0\leq a_j\leq m_j\\(\text{for }1\leq j \leq l)}} (vh_1)^{a_1}\cdots \\ & \quad \cdots(vh_l)^{a_l}\cdot \alpha_1^{m_1-a_1}\cdots \alpha_l^{m_l-a_l}\cdot \dbinom{m_1}{a_1}\cdots \dbinom{m_l}{a_l}
\Biggr)\Biggr\}\Biggr]\\
&=\sum_{l=0}^{r}\Biggr\{\zeta_{< d}^\star (-m_{l+1},\,\dots,\,-m_r)\sum_{\substack{0\leq a_j <m_j\\\deg \alpha_j<d\\(\text{for }1\leq j \leq l)}}v^{a_1+\cdots+a_l} \alpha_1^{m_1-a_1}\cdots \alpha_l^{m_l-a_l}\cdot
\dbinom{m_1}{a_1}\cdots \\
&\,\cdots \dbinom{m_l}{a_l} \cdot \zeta_{\infty}^\star (-a_1,\,\dots,\,-a_l)\Biggr \} \\
&=\sum_{l=0}^{r}\Biggr\{\zeta_{< d}^\star (-m_{l+1},\,\dots,\,-m_r)\sum_{\substack{0\leq a_j <m_j\\f_j \in \mathbb{F}_q^\times \\(\text{for }1\leq j \leq l)}}v^{a_1+\cdots+a_l} \zeta_{<d}(a_1-m_1) \cdots \\ &\cdots \zeta_{<d}(a_l-m_l)\cdot f_1^{m_1-a_1}\cdots f_l^{m_l-a_l} \cdot
\dbinom{m_1}{a_1}\,\cdots \dbinom{m_l}{a_l} \cdot \zeta_{\infty}^\star (-a_1,\,\dots,\,-a_l)\Biggr \} \\
&=\sum_{l=0}^{r}\Biggr\{\zeta_{< d}^\star (-m_{l+1},\,\dots,\,-m_r)\sum_{\substack{0\leq a_j <m_j\\(q-1)|(m_j-a_j)\\(\text{for }1\leq j \leq l)}}(-1)^l v^{a_1+\cdots+a_l} \zeta_{<d}(a_1-m_1) \cdots \\
&\quad \ \cdots \zeta_{<d}(a_l-m_l)\cdot
\dbinom{m_1}{a_1}\cdots  \dbinom{m_l}{a_l} \cdot \zeta_{\infty}^\star(-a_1,\,\dots,\,-a_l)\Biggr \}.
\end{align}
\end{proof}

\begin{example}
(1). For $m \in \mathbb{Z}_{\geq0}$, we have
\begin{align}
    \zeta_{\infty}(-m)&=\zeta_{<d}(-m)-\sum_{\substack{0\leq a<m \\(q-1)|(m-a)}}v^{a}\zeta_{<d}(a-m)\binom{m}{a}\zeta_{\infty}(-a),\\
    \end{align}
(This appeared in \cite[Theorem 5.6]{Goss1979} and \cite[\S 5.3]{ThakurBook}.) 

(2). For $m_1,\,m_2\in \mathbb{Z}_{\geq0}$, we have
\begin{align}
    &\quad \ \zeta_\infty(-m_1,\,-m_2)\\
    &=\zeta_{<d}(-m_1,\,-m_2)-\zeta_{<d}(-m_2)\sum_{\substack{0\leq a_1<m_1 \\(q-1)|(m_1-a_1)}}v^{a_1}\zeta_{<d}(a_1-m_1)\binom{m_1}{a_1}\zeta_{\infty}(-a_1)\\
    &\quad \ +\sum_{\substack{0\leq a_1<m_1 \\0\leq a_2<m_2\\(q-1)|(m_1-a_1)\\(q-1)| (m_2-a_2)}}v^{a_1+a_2}\zeta_{<d}(a_1-m_1)\zeta_{<d}(a_2-m_2)\binom{m_1}{a_1}\binom{m_2}{a_2}\zeta_{\infty}(-a_1,\,-a_2),
\end{align}
and
\begin{align}
    &\quad \ \zeta_\infty^\star(-m_1,\,-m_2)\\
    &=\zeta_{<d}^\star(-m_1,\,-m_2)-\zeta_{<d}^\star(-m_2)\sum_{\substack{0\leq a_1<m_1 \\(q-1)|(m_1-a_1)}}v^{a_1}\zeta_{<d}(a_1-m_1)\binom{m_1}{a_1}\zeta_{\infty}(-a_1)\\
    &\quad \ +\sum_{\substack{0\leq a_1<m_1 \\0\leq a_2<m_2\\(q-1)|(m_1-a_1)\\(q-1)| (m_2-a_2)}}v^{a_1+a_2}\zeta_{<d}(a_1-m_1)\zeta_{<d}(a_2-m_2)\binom{m_1}{a_1}\binom{m_2}{a_2}\zeta_{\infty}^\star(-a_1,\,-a_2).
\end{align}

(3). For $m_1,\,\dots,\,m_{r-1} \in \mathbb{Z}_{\geq 0}$, we have 
\begin{align}
&\quad \ \zeta_\infty(-m_1,\,\dots,\,-m_{r-1},\,0) \label{depthdown}
\\&=\sum_{l=0}^{r-1}\Biggr\{\zeta_{<d}(-m_{l+1},\,\dots,\,-m_r)\sum_{\substack{0\leq a_j \leq m_j\\(q-1)|(m_j-a_j)\\(\text{for }1\leq j \leq l)}}(-1)^l v^{a_1+\cdots+a_l} \zeta_{<d}(a_1-m_1) \cdots\\
&\quad \cdots \zeta_{<d}(a_l-m_l)\cdot
\dbinom{m_1}{a_1}\cdots \dbinom{m_l}{a_l} \cdot \zeta_{\infty}(-a_1,\,\dots,\,-a_l) \Biggr \} , 
\end{align}
(here $l$ runs from $0$ to $r-1$, not to $r$).

\end{example}
\begin{remarks}\label{reminduction}
Since $S_i(0)=0$ for $i \geq 1$, we have $\zeta_\infty(-m_1,\,\dots,\,-m_r)=0$ if one of $m_1,\,\dots,\,m_{r-1}$ is zero for $m_1,\,\dots,\,m_r \in \mathbb{Z}_{\geq0}$. Combining this with $\eqref{depthdown}$, we can see that the special value $\zeta_\infty(-m_1,\,\dots,\,-m_r)$ can be written down $\zeta_\infty(-l_1,\,\dots,\,-l_{r^\prime})$'s with $r^\prime<r$ or $l_j<m_j$ for some $1\leq j\leq r$.
\end{remarks}
\subsection{The formulae for special values of $v$MZFs at negative integers}\label{4.2}
We show that special values of $v$MZFs at negative integers can be written in terms of those of $\infty$MZFs. Similar results for star versions is also shown, i.e. we write down special values of $v$MZSFs at negative integers in terms of those of $\infty$MZSFs.

\begin{theorem}\label{recursiveformulaev}
Let $m_1\,\dots,\,m_r \in \mathbb{N}_{\geq0}$ and let $v$ be a monic irreducible polynomial with degree $d$. Then we have the equalities
\begin{align}
& \quad \zeta_v(-m_1,\,\dots,\,-m_r)\label{vInductive} \\
&=\sum_{l=0}^{r}\Biggr\{\zeta_{<d}(-m_{l+1},\,\dots,\,-m_r)\sum_{\substack{0\leq a_j \leq m_j\\(q-1)|(m_j-a_j)\\(\text{for }1\leq j \leq l)}}(-1)^l v^{a_1+\cdots+a_l} \zeta_{<d}(a_1-m_1) \cdots\\
&\quad \cdots \zeta_{<d}(a_l-m_l)\cdot
\dbinom{m_1}{a_1}\cdots \dbinom{m_l}{a_l} \cdot \zeta_{\infty}(-a_1,\,\dots,\,-a_l) \Biggr \} . 
\end{align}
and
\begin{align}
& \qquad \zeta_v^\star(-m_1,\,\dots,\,-m_r)\\
&=\sum_{l=0}^{r}\Biggr\{\zeta_{< d}^\star (-m_{l+1},\,\dots,\,-m_r)\sum_{\substack{0\leq a_j \leq m_j\\(q-1)|(m_j-a_j)\\(\text{for }1\leq j \leq l)}}(-1)^l v^{a_1+\cdots+a_l} \cdot\zeta_{<d}(a_1-m_1) \cdots \\
&\quad  \cdots \zeta_{<d}(a_l-m_l)\cdot
\dbinom{m_1}{a_1}\cdots \dbinom{m_l}{a_l} \cdot \zeta_{\infty}^\star(-a_1,\,\dots,\,-a_l) \Biggr \}. \label{vstarInductive}
\end{align}
\end{theorem}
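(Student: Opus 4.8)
The plan is to imitate the proof of Theorem~\ref{recursiveformulae} almost verbatim; the sole genuine novelty is a boundary term forced by the coprimality condition built into $\tilde{S}$. First I would split the defining series according to where the summation indices cross the threshold $d$:
\[
\zeta_v(-m_1,\dots,-m_r)=\sum_{l=0}^{r}\ \sum_{i_1>\cdots>i_l\geq d>i_{l+1}>\cdots>i_r\geq 0}\tilde{S}_{i_1}(-m_1)\cdots\tilde{S}_{i_r}(-m_r).
\]
For each $l$ the factor running over the low indices $i_{l+1}>\cdots>i_r$, all of which are $<d$, separates off. Since any monic polynomial of degree $<d$ is automatically coprime to $v$, we have $\tilde{S}_i(-m)=S_i(-m)$ for $i<d$, so this low factor is exactly $\zeta_{<d}(-m_{l+1},\dots,-m_r)$, precisely as in the $\infty$-adic case.

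Next I would analyse the high factor $\sum_{i_1>\cdots>i_l\geq d}\tilde{S}_{i_1}(-m_1)\cdots\tilde{S}_{i_l}(-m_l)$. Writing $\tilde{S}_{i_j}(-m_j)=\sum_{\deg n_j=i_j,\ n_j\in A_+,\ (n_j,v)=1}n_j^{m_j}$ and performing the division $n_j=vh_j+\alpha_j$ with $h_j\in A_+$ of degree $i_j-d$ and $\deg\alpha_j<d$, the condition $(n_j,v)=1$ is equivalent to $\alpha_j\neq 0$. Expanding each $(vh_j+\alpha_j)^{m_j}$ by the binomial theorem and separating the $h_j$- and $\alpha_j$-sums, the $h_j$-sum (after shifting $i_j\mapsto i_j-d$) yields $\zeta_\infty(-a_1,\dots,-a_l)$, while the $\alpha_j$-sum becomes $\sum_{\alpha_j\neq 0,\ \deg\alpha_j<d}\alpha_j^{m_j-a_j}$. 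Factoring each nonzero $\alpha_j$ into its leading coefficient times its monic part turns this into $\bigl(\sum_{f_j\in\mathbb{F}_q^\times}f_j^{m_j-a_j}\bigr)\,\zeta_{<d}(a_j-m_j)$, which vanishes unless $(q-1)\mid(m_j-a_j)$ and otherwise equals $-\zeta_{<d}(a_j-m_j)$; collecting the $l$ resulting signs produces the factor $(-1)^l$.

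The one step where the computation genuinely departs from Theorem~\ref{recursiveformulae} — and the point I would handle with care — is the boundary term $a_j=m_j$. In the $\infty$-adic proof the identity $\sum_{\alpha_j}\alpha_j^{0}=q^d=0$ annihilates this term, which is exactly why its inner sum is restricted to $0\leq a_j<m_j$. For $v$MZFs the coprimality condition excludes $\alpha_j=0$, so instead $\sum_{\alpha_j\neq 0,\ \deg\alpha_j<d}\alpha_j^{0}=q^d-1=-1$ in characteristic $p$; equivalently $\sum_{f_j\in\mathbb{F}_q^\times}f_j^{0}=q-1=-1$ and $\zeta_{<d}(0)=1$, so the term $a_j=m_j$ survives with exactly the same shape as the others (note that $(q-1)\mid 0$). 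This is what enlarges the range of summation from $0\leq a_j<m_j$ to $0\leq a_j\leq m_j$ in \eqref{vInductive}. Reassembling the low and high factors and summing over $l$ gives the asserted formula, and the star identity \eqref{vstarInductive} follows from the identical computation with every strict inequality relaxed to $\geq$ and $\zeta_{<d},\,\zeta_\infty$ replaced by $\zeta_{<d}^\star,\,\zeta_\infty^\star$. Finiteness of every sum is guaranteed by Lemma~\ref{firstlemma}, so no convergence issue intervenes.
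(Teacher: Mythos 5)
Your proposal is correct and follows essentially the same route as the paper's own proof: the same splitting of the index chain at the threshold $d$, the same substitution $n_j = vh_j+\alpha_j$ with the coprimality condition becoming $\alpha_j\neq 0$, and the same observation that $\sum_{\alpha\neq 0,\,\deg\alpha<d}\alpha^{0}=q^d-1=-1$ keeps the boundary term $a_j=m_j$ alive, which is precisely what enlarges the range to $0\leq a_j\leq m_j$ in \eqref{vInductive}. The paper treats the star case by the identical computation with $\geq$ in place of $>$, exactly as you indicate.
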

 Let us recall that these special values are in $k$ (Remark \ref{remarkink}). The difference between Theorems \ref{recursiveformulae} and \ref{recursiveformulaev} appears in the ranges which $a_j$'s run through. 
\begin{proof}
We have
\begin{align}
&\zeta_v(-m_1,\,\dots,\,m_l)
=\sum_{l=0}^{r} \left[\sum_{ i_1> \cdots>i_l \geq d > i_{l+1}>\cdots>i_r\geq0 } \left\{\sum_{\substack{\deg n_j= i_j\\n_j \in A_+\\ (n_j,\,v)=1\\(\text{for } r \geq j \geq 1)}}n_1^{m_1}\cdots n_r^{m_l}\right\}\right]\\
=&\sum_{l=0}^{r} \left[\zeta_{<d}(-m_{l+1},\,\dots,\,-m_r) \sum_{i_1>\cdots>i_l\geq d} \left\{\sum_{\substack{\deg n_j= i_j\\n_j \in A_+\\ (n_j,\,v)=1\\(\text{for }1\leq j \leq l)}}n_1^{m_1}\cdots n_l^{m_l}\right\}\right]\\
\intertext{here, let us recall that $d$ is the degree of $v$. Putting $n_j=vh_j+\alpha_j$, we have}
=&\sum_{l=0}^{r} \left[\zeta_{<d}(-m_{l+1},\dots,-m_r) \sum_{i_1>\cdots>i_l\geq d} \left\{\sum_{\substack{\deg h_j= i_j-d\\h_j \in A_+\\ \deg \alpha_j<d, \,\alpha_j \in A \setminus \{0\} \\(\text{for }1\leq j \leq l)}}(vh_1+\alpha_1)^{m_1}\cdots (vh_l+\alpha_l)^{m_l}\right\}\right]\\
\intertext{(The ranges which $\alpha_j$'s run through are different from those in the proof of Theorem \ref{recursiveformulae}.)}
=&\sum_{l=0}^{r} \Biggr[\zeta_{<d}(-m_{l+1},\,\dots,\,-m_r) \sum_{i_1>\cdots>i_l\geq d} \Biggr\{\sum_{\substack{\deg h_j= i_j-d\\h_j \in A_+\\ 0 \leq \deg \alpha_j<d \\(\text{for }1\leq j \leq l)}}
\Biggr(\sum_{\substack{0\leq a_j\leq m_j\\(\text{for }1\leq j \leq l)}} (vh_1)^{a_1}\cdots \\ &\cdots(vh_l)^{a_l}\cdot \alpha_1^{m_1-a_1}\cdots \alpha_l^{m_l-a_l}\cdot \dbinom{m_1}{a_1}\cdots \dbinom{m_l}{a_l}
\Biggr)\Biggr\}\Biggr]\\
=&\sum_{l=0}^{r}\Biggr\{\zeta_{<d}(-m_{l+1},\,\dots,\,-m_r)\sum_{\substack{0\leq a_j \leq m_j\\f_j \in \mathbb{F}_q^\times \\(\text{for }1\leq j \leq l)}}v^{a_1+\cdots+a_l} \zeta_{<d}(a_1-m_1) \cdots\zeta_{<d}(a_l-m_l) \\ 
& \cdot f_1^{m_1-a_1} \cdots f_l^{m_l-a_l} \cdot
\dbinom{m_1}{a_1}\,\cdots \dbinom{m_l}{a_l} \cdot \zeta_{\infty}(-a_1,\,\dots,\,-a_l)\Biggr \} \\
=&\sum_{l=0}^{r}\Biggr\{\zeta_{<d}(-m_{l+1},\,\dots,\,-m_r)\sum_{\substack{0\leq a_j  \leq m_j\\(q-1)|(m_j-a_j)\\(\text{for }1\leq j \leq l)}}(-1)^l v^{a_1+\cdots+a_l} \zeta_{<d}(a_1-m_1) \cdots  \\
&\qquad \qquad \cdots  \zeta_{<d}(a_l-m_l)\cdot \dbinom{m_1}{a_1}\cdots\dbinom{m_l}{a_l} \cdot \zeta_{\infty}(-a_1,\,\dots,\,-a_l)\Biggr \}.
\end{align}

The second formula can be confirmed as follows:
\begin{align}
&\zeta_v^\star(-m_1,\,\dots,\,m_l)
=\sum_{l=0}^{r} \left[\sum_{ i_1\geq \cdots \geq i_l \geq d > i_{l+1} \geq \cdots \geq i_r\geq0 } \left\{\sum_{\substack{\deg n_j= i_j\\n_j \in A_+\\ (n_j,\,v)=1\\(\text{for } r \geq j \geq 1)}}n_1^{m_1}\cdots n_r^{m_l}\right\}\right]\\
=&\sum_{l=0}^{r} \left[\zeta_{< d}^\star(-m_{l+1},\,\dots,\,-m_r) \sum_{i_1 \geq \cdots \geq i_l\geq d} \left\{\sum_{\substack{\deg n_j= i_j\\n_j \in A_+\\ (n_j,\,v)=1\\(\text{for }1\leq j \leq l)}}n_1^{m_1}\cdots n_l^{m_l}\right\}\right]\\
=&\sum_{l=0}^{r} \left[\zeta_{< d}^\star(-m_{l+1},\dots,-m_r) \sum_{i_1 \geq \cdots \geq i_l\geq d} \left\{\sum_{\substack{\deg h_j= i_j-d\\h_j \in A_+\\ \deg \alpha_j<d, \,\alpha_j \neq 0 \\(\text{for }1\leq j \leq l)}}(vh_1+\alpha_1)^{m_1}\cdots (vh_l+\alpha_l)^{m_l}\right\}\right]\\
=&\sum_{l=0}^{r}\Biggr\{\zeta_{< d}^\star(-m_{l+1},\,\dots,\,-m_r)\sum_{\substack{0\leq a_j \leq m_j\\f_j \in \mathbb{F}_q^\times \\(\text{for }1\leq j \leq l)}}v^{a_1+\cdots+a_l} \zeta_{<d}(a_1-m_1) \cdots \\ 
&\quad\cdots \zeta_{<d}(a_l-m_l)\cdot f_1^{m_1-a_1} \cdots f_l^{m_l-a_l}\cdot
\dbinom{m_1}{a_1}\,\cdots \dbinom{m_l}{a_l}\cdot \zeta_{\infty}^\star(-a_1,\,\dots,\,-a_l)\Biggr \} \\
=&\sum_{l=0}^{r}\Biggr\{\zeta_{< d}^\star(-m_{l+1},\,\dots,\,-m_r)\sum_{\substack{0\leq a_j  \leq m_j\\(q-1)|(m_j-a_j)\\(\text{for }1\leq j \leq l)}}(-1)^l v^{a_1+\cdots+a_l} \zeta_{<d}(a_1-m_1) \cdots  \\
&\qquad \cdots \zeta_{<d}(a_l-m_l)\cdot
\dbinom{m_1}{a_1}\cdots \dbinom{m_l}{a_l} \cdot \zeta_{\infty}^\star(-a_1,\,\dots,\,-a_l)\Biggr \}.
\end{align}
\end{proof}

\begin{example}
(1). For $m \in \mathbb{Z}_{\geq0}$, we have
\begin{align}
    &\quad\zeta_{v}(-m)=\zeta_{<d}(-m)-\sum_{\substack{0\leq a\leq m \\(q-1)|(m-a)}}v^{a}\zeta_{<d}(a-m)\binom{m}{a}\zeta_{\infty}(-a).
\end{align}

(2). For $m_1,\,m_2\in \mathbb{Z}_{\geq0}$, we have
\begin{align}
    &\quad \zeta_v(-m_1,\,-m_2)\\
    &=\zeta_{<d}(-m_1,\,-m_2)-\zeta_{<d}(-m_2)\sum_{\substack{0\leq a_1 \leq m_1 \\(q-1)|(m_1-a_1)}}v^{a_1}\zeta_{<d}(a_1-m_1)\binom{m_1}{a_1}\zeta_{\infty}(-a_1)\\
    &\quad +\sum_{\substack{0\leq a_1 \leq m_1 \\0\leq a_2 \leq m_2\\(q-1)|(m_1-a_1)\\(q-1)| (m_2-a_2)}}v^{a_1+a_2}\zeta_{<d}(a_1-m_1)\zeta_{<d}(a_2-m_2)\binom{m_1}{a_1}\binom{m_2}{a_2}\zeta_{\infty}(-a_1,\,-a_2),
\end{align}
and
\begin{align}
    &\quad \zeta_v^\star(-m_1,\,-m_2)\\
    &=\zeta_{< d}^\star(-m_1,\,-m_2)-\zeta_{< d}^\star(-m_2)\sum_{\substack{0\leq a_1 \leq m_1 \\(q-1)|(m_1-a_1)}}v^{a_1}\zeta_{<d}(a_1-m_1)\binom{m_1}{a_1}\zeta_{\infty}(-a_1)\\
    &\quad +\sum_{\substack{0\leq a_1 \leq m_1 \\0\leq a_2 \leq m_2\\(q-1)|(m_1-a_1)\\(q-1)| (m_2-a_2)}}v^{a_1+a_2}\zeta_{<d}(a_1-m_1)\zeta_{<d}(a_2-m_2)\binom{m_1}{a_1}\binom{m_2}{a_2}\zeta_{\infty}^\star(-a_1,\,-a_2).
\end{align}
\end{example}

\subsection{Relationship between $\infty$MZFs and $v$MZFs} \label{4.3}

In the case of characteristic $0$, there is a formula connecting ``special values'' of multiple zeta functions at negative integers with those of $p$-adic $L$-functions shown in  \cite[Remark 2.2]{Furusho2017b}.
Using the results of previous subsections, we obtain similar relationship between special values of $\infty$MZFs and $v$MZFs at negative integers.

\begin{theorem}\label{recursiveformulaeCor}
Special values of $v$MZFs at negative integers can be written in terms of those of $\infty$MZFs by the following identity which holds for $m_1\,\dots,\,m_r \in \mathbb{N}_{\geq0}$:

\begin{align}
\quad &\quad \zeta_v(-m_1,\,\dots,\,-m_r)-\zeta_{\infty}(-m_1,\,\dots,\,-m_r) \label{EqInductiveCor}\\
&=\sum_{l=0}^{r}\Biggr\{\zeta_{<d}(-m_{l+1},\,\dots,\,-m_r)\sum_{\substack{0\leq a_j \leq m_j\\(q-1)|(m_j-a_j)\\(1\leq j \leq l)\\a_j=m_j \text{ for some $j$}}}(-1)^l v^{a_1+\cdots+a_l} \zeta_{<d}(a_1-m_1) \cdots\\
&\ \cdots \zeta_{<d}(a_l-m_l)\cdot
\dbinom{m_1}{a_1}\cdots \dbinom{m_l}{a_l}\cdot\zeta_{\infty}(-a_1,\,\dots,\,-a_l) \Biggr \} . 
\end{align}
Conversely, special values of $\infty$MZFs at negative integers can be written in terms of those of $v$MZFs.
\end{theorem}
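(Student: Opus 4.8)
The plan is to read off both assertions from the two recursive formulae already established, Theorems \ref{recursiveformulae} and \ref{recursiveformulaev}. For the displayed identity \eqref{EqInductiveCor} I would simply subtract \eqref{inftyInductive} from \eqref{vInductive}. Term by term in $l$, the two right-hand sides have \emph{identical} summands---the same factor $\zeta_{<d}(-m_{l+1},\dots,-m_r)$, the same $(-1)^l v^{a_1+\cdots+a_l}$, the same products of $\zeta_{<d}(a_j-m_j)$, binomial coefficients, and $\zeta_\infty(-a_1,\dots,-a_l)$---and differ only in the range of the inner summation: in the $\infty$-adic formula each $a_j$ runs over $0\le a_j<m_j$, whereas in the $v$-adic formula it runs over $0\le a_j\le m_j$, both under $(q-1)\mid(m_j-a_j)$. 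The contributions over the common box $\{0\le a_j<m_j\ \text{for all } j\}$ therefore cancel, and precisely the tuples in $\{0\le a_j\le m_j\}$ with $a_j=m_j$ for at least one index $j$ survive. This is exactly the summation range in \eqref{EqInductiveCor}, so the identity follows with no further computation, once one records that for $l=0$ the condition ``$a_j=m_j$ for some $j$'' is vacuous and so contributes nothing, matching the cancellation of the two $l=0$ terms.

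For the converse I would invert Theorem \ref{recursiveformulaev}, viewing it as a triangular linear system that expresses each $\zeta_v(-m_1,\dots,-m_r)$ as a $k$-linear combination of values $\zeta_\infty(-a_1,\dots,-a_l)$. The diagonal contribution is the value $\zeta_\infty(-m_1,\dots,-m_r)$ itself, which arises on the right only from $l=r$ with $(a_1,\dots,a_r)=(m_1,\dots,m_r)$; there the tail factor $\zeta_{<d}(-m_{l+1},\dots,-m_r)$ is empty and equals $1$, each $\zeta_{<d}(0)=1$ and each $\binom{m_j}{m_j}=1$, so its coefficient is $(-1)^r v^{m_1+\cdots+m_r}$. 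Ordering indices lexicographically first by depth $r$ and then by weight $\sum_j m_j$, every other term carries a value of strictly smaller complexity: the terms with $l<r$ drop in depth, and the remaining $l=r$ terms satisfy $\sum_j a_j<\sum_j m_j$ and so drop in weight. Since $(-1)^r v^{m_1+\cdots+m_r}$ is a unit times a power of $v$, it is a nonzero element of $A$, hence invertible in $k$; dividing by it solves for $\zeta_\infty(-m_1,\dots,-m_r)$ in terms of $\zeta_v(-m_1,\dots,-m_r)$ and of $\infty$MZF values of strictly smaller complexity. A strong induction on this well-founded ordering, with base case the empty index $\zeta_\infty()=1=\zeta_v()$, then rewrites all of the latter in terms of $v$MZF values and finishes the converse.

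The routine parts are the bookkeeping of the cancellation in the first step and checking that the non-diagonal terms genuinely decrease in the chosen complexity measure. The one point that needs care is the invertibility of the diagonal coefficient: it is essential to invert Theorem \ref{recursiveformulaev} directly, whose diagonal coefficient $(-1)^r v^{\sum_j m_j}$ never vanishes, rather than the difference \eqref{EqInductiveCor}, whose diagonal coefficient $1+(-1)^r v^{\sum_j m_j}$ can vanish when $\sum_j m_j=0$ (for instance when $r$ is odd, or in characteristic $2$). The degenerate weight-zero indices, where many of the $\zeta_\infty(-a_1,\dots,-a_l)$ collapse because $S_i(0)=\delta_{i,0}$, then require no separate treatment and are absorbed into the same induction.
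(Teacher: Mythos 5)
Your derivation of the identity \eqref{EqInductiveCor} is exactly the paper's proof: the paper obtains it by ``comparing the equations \eqref{inftyInductive} and \eqref{vInductive}'', which is precisely the term-by-term subtraction you carry out (identical summands, ranges $0\le a_j<m_j$ versus $0\le a_j\le m_j$, so exactly the tuples with some $a_j=m_j$ survive, and the two $l=0$ terms cancel). For the converse, however, your route genuinely differs from the paper's. The paper specializes \eqref{EqInductiveCor} to $r=1$ to get $\zeta_v(-m)=(1-v^m)\zeta_\infty(-m)$, which settles depth one, and then reduces arbitrary depth to depth one via Remark \ref{reminduction}, i.e.\ via the $\infty$-adic recursion \eqref{inftyInductive} together with \eqref{depthdown}, which rewrites $\zeta_\infty(-m_1,\dots,-m_r)$ in terms of $\zeta_\infty$-values of strictly smaller depth or weight; iterating, everything collapses to depth-one values, which are then converted to $v$-adic ones, so the paper's final expression involves only depth-one $v$-adic zeta values and the only division used is by $1-v^m$ with $m\ge1$. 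You instead invert Theorem \ref{recursiveformulaev} directly, by strong induction on the lexicographic (depth, weight) order, dividing by the diagonal coefficient $(-1)^rv^{m_1+\cdots+m_r}$. Both are correct well-founded inversions of triangular systems; yours keeps genuinely multiple $v$-adic values in the resulting expression but avoids Remark \ref{reminduction} altogether, and it makes explicit a point the paper never has to confront: your warning that inverting \eqref{EqInductiveCor} itself would meet the possibly vanishing coefficient $1+(-1)^rv^{m_1+\cdots+m_r}$ (all $m_j=0$, with $r$ odd or $p=2$) identifies a real pitfall, which the paper's reduction to depth one sidesteps implicitly because the weight-zero values are constants ($\zeta_\infty(0)=1$, and $\zeta_\infty(0,\dots,0)=0$ for $r\ge2$).
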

\begin{proof}
Comparing the equations \eqref{inftyInductive} and \eqref{vInductive}, we obtain \eqref{EqInductiveCor}. As \eqref{EqInductiveCor} implies 
\begin{equation}
    \zeta_v(-m)=(1-v^{m})\zeta_{\infty}(-m), \ (m \in \mathbb{N}), \label{Dep1interpolate}
\end{equation}
we can write down special values of $\infty$MZFs at negative integers in terms of those of $v$MZFs if $r=1$, and we can reduced to the case $r=1$ by Remark \ref{reminduction}.
\end{proof}
By virtue of Theorem \ref{recursiveformulaeCor}, we can write down special values of $\infty$MZFs at negative integers in terms of those of $v$MZFs, and vice versa.
\begin{example}
In the case of $r=1$, the corollary above yields the well-known formula \eqref{Dep1interpolate}.
 If $r=2$, we can write down special values of the $v$MZF in terms of special values of $\infty$MZFs as follows:
 \begin{align}
 \quad \ \zeta_v(-m_1,\,-m_2)&=(1+v^{m_1+m_2})\zeta_\infty(-m_1,\,-m_2)-\zeta_{<d}(-m_2)v^{m_1}\zeta_\infty(-m_1)  \label{dep2ex1}\\
 &\, +\sum_{\substack{0\leq a_1 <m_1\\(q-1)|(m_1-a_1)}}v^{a_1+m_2}\zeta_{<d}(a_1-m_1)\dbinom{m_1}{a_1}\zeta_\infty(-a_1,\, -m_2)\\
 &\, +\sum_{\substack{0\leq a_2 < m_2\\(q-1)|(m_2-a_2)}}v^{m_1+a_2}\zeta_{<d}(a_2-m_2)\dbinom{m_2}{a_2}\zeta_\infty(-m_1,\, -a_2).
 \end{align}

Conversely, recursive applications of \eqref{dep2ex1} induces the following formula, which expresses special values of $\infty$MZFs at negative integers in terms of those of $v$MZFs, for $m_1,\,m_1^\prime \in \mathbb{Z}_{\geq0}$,
\begin{align*}
     & \quad \ v^{m_1+m_1^\prime}\zeta_\infty(-m_1,\,-m_1^\prime)\\
     &=\sum_{\substack{l,\,l^\prime \geq 1\\m_1>m_2>\cdots\,>m_l\geq 0\\m_1^\prime>m_2^\prime>\cdots\,>m_{l^\prime}^\prime \geq0 \\ (q-1)|(m_{i-1}-m_i)\\(q-1)|(m_{i-1}^\prime-m_i^\prime)}}
     \left\{  \zeta_v(-m_l,\,-m_{l^\prime}^\prime)+ \zeta_{<d}(-m_{l^\prime}^\prime) \frac{v^{m_l}}{1-v^{m_l}} \zeta_v(-m_l)\right\}\\
    &\quad \cdot \Biggl[ -\sum_{ [\dots,(j_i,\,j_i^\prime),\dots]  \in \operatorname{S}_{l,\,l^\prime}} \prod_{i=1}^{l+l^\prime-1}\Biggl\{\zeta_{<d}(m_{j_{i-1}}-m_{j_i})\zeta_{<d}(m_{j_{i-1}^\prime}^\prime-m_{j_i^\prime}^\prime) \\ &\quad \quad \cdot\frac{-v^{m_{j_i}+m_{{j_i^\prime}}^\prime}}{1+v^{m_{j_i}+m_{{j_i^\prime}}^\prime}}\dbinom{m_{j_{i-1}}}{m_{j_i}}\dbinom{m_{{j_{i-1}^\prime}}^\prime}{m_{{j_i^\prime}}^\prime}\  \Biggr\} \Biggr]\\
    &=-\sum_{\substack{l,\,l^\prime \geq 1\\ [\dots,(j_i,\,j_i^\prime),\dots]  \in \operatorname{S}_{l,\,l^\prime}}}\sum_{\substack{m_1>m_2>\cdots\,>m_l\geq 0\\m_1^\prime>m_2^\prime>\cdots\,>m_{l^\prime}^\prime \geq0 \\ (q-1)|(m_{i-1}-m_i),(m_{i-1}^\prime-m_i^\prime)}}
    \Biggl[  \prod_{i=1}^{l+l^\prime-1}\Biggl\{\zeta_{<d}(m_{j_{i-1}}-m_{j_i}) \\ 
    &\quad \quad  \cdot\zeta_{<d}(m_{j_{i-1}^\prime}^\prime-m_{j_i^\prime}^\prime)\frac{-v^{m_{j_i}+m_{{j_i^\prime}}^\prime}}{1+v^{m_{j_i}+m_{{j_i^\prime}}^\prime}}\dbinom{m_{j_{i-1}}}{m_{j_i}}\dbinom{m_{{j_{i-1}^\prime}}^\prime}{m_{{j_i^\prime}}^\prime}\  \Biggr\} \Biggr]\\
    & \quad \ \cdot \left\{  \zeta_v(-m_l,\,-m_{l^\prime}^\prime)+ \zeta_{<d}(-m_{l^\prime}^\prime) \frac{v^{m_l}}{1-v^{m_l}} \zeta_v(-m_l)\right\}.\\
\end{align*}
Here, we define $\zeta_{<d}(m_{j_0}-m_{j_1})=\zeta_{<d}(m_{j_0}^\prime- m_{j_1}^\prime)=\binom{m_{j_0}}{m_{j_1}}=\binom{m_{j_0}^\prime}{m_{j_1}^\prime} \coloneqq 1$ and define $\operatorname{S}_{l,\,l^\prime}$ to be the set of maximal strictly increasing sequences in $\{1,\,\dots,\,l\} \cdot \{1,\,\dots,\,l ^\prime\}$. In other words, $\operatorname{S}_{l,\,l^\prime}$ is the  set of sequences $[(j_1,\,j_1^\prime)$, $\dots$,$(j_{l+l^\prime-1}$, $j_{l+l^\prime-1}^\prime)]$ of length $l+l^\prime-1$ such that $j_1=j_1^\prime=1$, $(j_i,\,j_i^\prime)\neq (j_{i+1},\,j_{i+1}^\prime)$, $j_{i+1}=j_i,\,j_i+1$ and $j_{i+1}^\prime=j_i^\prime,\,j_i^\prime+1$ for all $1 \leq i \leq l+l^\prime-2$ (these conditions imply  that $j_{l+l^\prime-1}=l$ and $j_{l+l^\prime-1}^\prime=l^\prime$). For example, $[(1,1),\,(1,2),\,(1,3),\,(2,3),\,\dots,\,(l-1,l^\prime),\,(l,l^\prime)] \in \operatorname{S}_{l,\,l^\prime}$.
\end{example}

By the similar arguments, we obain the following formulae for star-versions:
\begin{theorem}\label{recursiveformulaeCorstar}
For $m_1\,\dots,\,m_r \in \mathbb{N}_{\geq0}$, we have the following identity which enable us to write down special values of $v$MZSFs at negative integers in terms of those of $\infty$MZSFs:

\begin{align}
\quad &\quad \zeta_v^\star(-m_1,\,\dots,\,-m_r)-\zeta_{\infty}^\star(-m_1,\,\dots,\,-m_r) \label{EqInductiveCorstar}\\
&=\sum_{l=0}^{r}\Biggr\{\zeta_{<d}^\star(-m_{l+1},\,\dots,\,-m_r)\sum_{\substack{0\leq a_j \leq m_j\\(q-1)|(m_j-a_j)\\(1\leq j \leq l)\\a_j=m_j \text{ for some $j$}}}(-1)^l v^{a_1+\cdots+a_l} \zeta_{<d}(a_1-m_1) \cdots\\
&\ \cdots \zeta_{<d}(a_l-m_l)\cdot
\dbinom{m_1}{a_1}\cdots \dbinom{m_l}{a_l}\cdot\zeta_{\infty}^\star(-a_1,\,\dots,\,-a_l) \Biggr \} . 
\end{align}
Conversely, we can write down special values of $\infty$MZSFs at negative integers in terms of those of $v$MZSFs.
\end{theorem}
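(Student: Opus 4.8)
The plan is to establish the forward identity \eqref{EqInductiveCorstar} by the same direct comparison that produced \eqref{EqInductiveCor}, and then to deduce the converse by inverting this identity inductively. First I would place side by side the two star-recursions already proved: formula \eqref{inftystarInductive} for $\zeta_{\infty}^\star(-m_1,\dots,-m_r)$ in Theorem \ref{recursiveformulae} and formula \eqref{vstarInductive} for $\zeta_v^\star(-m_1,\dots,-m_r)$ in Theorem \ref{recursiveformulaev}. These two expressions agree summand by summand except that each inner index $a_j$ runs over $0\le a_j<m_j$ in the $\infty$-adic formula and over $0\le a_j\le m_j$ in the $v$-adic formula. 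Subtracting the former from the latter therefore leaves exactly the contribution of the tuples $(a_1,\dots,a_l)$ lying in the larger range but not the smaller, namely those with $a_j=m_j$ for at least one index $j$ (note $(q-1)\mid(m_j-m_j)$ and $\zeta_{<d}(0)=\binom{m_j}{m_j}=1$). This is precisely the summation condition recorded in \eqref{EqInductiveCorstar}, so the forward identity follows with no additional computation.

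For the converse I would exploit the triangular shape of \eqref{EqInductiveCorstar}. Its unique summand with $l=r$ and $a_1=m_1,\dots,a_r=m_r$ equals $(-1)^r v^{m_1+\cdots+m_r}\zeta_{\infty}^\star(-m_1,\dots,-m_r)$, and it is the only summand whose $\infty$MZSF factor simultaneously has depth $r$ and argument $(-m_1,\dots,-m_r)$. Transposing it to the left-hand side rewrites \eqref{EqInductiveCorstar} as
\begin{equation*}
\big(1+(-1)^r v^{m_1+\cdots+m_r}\big)\,\zeta_{\infty}^\star(-m_1,\dots,-m_r)=\zeta_v^\star(-m_1,\dots,-m_r)-R,
\end{equation*}
where every $\infty$MZSF occurring in the remainder $R$ has either strictly smaller depth ($l<r$) or the same depth $r$ together with strictly smaller total degree $a_1+\cdots+a_r<m_1+\cdots+m_r$. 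I would then run a strong induction on the pair (depth, total degree), ordered lexicographically with depth as the primary key: by the inductive hypothesis every $\infty$MZSF in $R$ is already a $k$-linear combination of values of $v$MZSFs at negative integers, so dividing through by the diagonal coefficient $1+(-1)^r v^{m_1+\cdots+m_r}$ expresses $\zeta_{\infty}^\star(-m_1,\dots,-m_r)$ in the same way. This plays the role that Remark \ref{reminduction} plays in the proof of Theorem \ref{recursiveformulaeCor}.

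The hard part will be justifying that division, that is, the invertibility of the diagonal coefficient. Whenever $m_1+\cdots+m_r\ge 1$ the element $1+(-1)^r v^{m_1+\cdots+m_r}$ is a nonconstant polynomial in $\theta$, of degree $d(m_1+\cdots+m_r)\ge 1$ with leading coefficient $\pm 1$, hence a nonzero and therefore invertible element of $k$; the induction then goes through. The one genuinely delicate point is the minimal case $m_1=\cdots=m_r=0$, where the coefficient collapses to $1+(-1)^r$ and may vanish. There one cannot divide, but $S_i(0)=0$ for $i\ge 1$ (Lemma \ref{firstlemma}) forces both $\zeta_{\infty}^\star(0,\dots,0)$ and $\zeta_v^\star(0,\dots,0)$ down to explicit constants, which I would evaluate directly to anchor the induction. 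As in the previous theorems, each intermediate star identity is obtained verbatim from its non-star counterpart by replacing $S_{i_j}$ with $\tilde S_{i_j}$ and the strict inequalities by non-strict ones.
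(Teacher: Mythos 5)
Your proof of the forward identity \eqref{EqInductiveCorstar} --- termwise subtraction of \eqref{inftystarInductive} from \eqref{vstarInductive}, with the difference supported exactly on the tuples having $a_j=m_j$ for some $j$ --- is precisely the paper's argument (the paper proves the non-star case this way in Theorem \ref{recursiveformulaeCor} and invokes ``similar arguments'' for the star case). Your converse, however, takes a genuinely different route. The paper reduces to the depth-one case: it appeals to (the star analogue of) Remark \ref{reminduction}, i.e.\ it iterates the recursion \eqref{inftystarInductive} to write any $\zeta_\infty^\star(-m_1,\dots,-m_r)$ as a $k$-linear combination of depth-one values and explicit constants, and only then uses the $v$-adic information through $\zeta_\infty(-m)=\zeta_v(-m)/(1-v^m)$ for $m\ge 1$. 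You instead invert the identity \eqref{EqInductiveCorstar} itself: you isolate the unique diagonal summand $(-1)^r v^{m_1+\cdots+m_r}\zeta_\infty^\star(-m_1,\dots,-m_r)$, divide by the coefficient $1+(-1)^r v^{m_1+\cdots+m_r}$, and run a lexicographic induction on the pair (depth, total degree). Both arguments are correct, and yours is arguably more self-contained: it avoids the star analogue of Remark \ref{reminduction}, which the paper never states and whose literal transfer is delicate, since the vanishing $\zeta_\infty(-m_1,\dots,-m_r)=0$ for an intermediate zero argument fails for star values (e.g.\ $\zeta_\infty^\star(0,\dots,0)=1$); and you correctly identify and dispose of the two points your inversion genuinely needs, namely invertibility of the diagonal coefficient when $m_1+\cdots+m_r\ge 1$ (a nonconstant polynomial in $\theta$, hence a unit of $k$) and the all-zero base case, where $1+(-1)^r$ vanishes for odd $r$ and always in characteristic $2$, so that one must instead evaluate $\zeta_\infty^\star(0,\dots,0)=1$ directly from Lemma \ref{firstlemma}. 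What the paper's route buys in exchange is a sharper-looking output: every $\infty$MZSF special value becomes a combination of depth-one $v$-adic zeta values and constants, whereas your inversion expresses it through $v$MZSF values of all depths up to $r$.
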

Theorem \ref{recursiveformulaeCorstar} enable us to write down special values of $\infty$MZSFs at negative integers in terms of those of $v$MZSFs, and vice versa.
\vskip\baselineskip

\textbf{Acknowledgement}
The author is deeply grateful to his doctoral advisor, Professor H. Furusho for his profound instruction and continuous encouragements. He also grateful to Proffessors R.~Harada, Y.~Mishiba and T.~Ngo~Dac who gave very variable comments on manuscript of this paper.

\appendix 
\section{Analytic properties of $\infty$MZFs and $v$MZFs}\label{AppA}
In this appendix, we show the convergence of infinite series defining $\infty$MZFs and $v$MZFs and rigid analyticity of these functions by reproducing the arguments of Goss in \cite{Goss1983} in order to make present paper self-contained.

\begin{lemma}\label{lemmaestimationW}
Let ${\operatorname{ord}}$ be $\operatorname{ord}_\infty$ or $\operatorname{ord}_v$ and let $L/k$ be a field extension to which the valuation $\operatorname{ord}$ is prolonged. Let us take finite dimensional $\mathbb{F}_q$-vector subspace $W$ of $L$ such that ${\operatorname{ord}}(w)>0$ for all $w \in W$ and let
\begin{equation}
    W_j\coloneqq \{w \in W \mid {\operatorname{ord}}(w)\geq j\}.
\end{equation}
If we take $w_N \in k_{\operatorname{ord}}$ such that ${\operatorname{ord}}(w_N)>0$, then
\begin{equation}
    {\operatorname{ord}}\left(\sum_{w \in W}(1+w_N+w)^t \right) \geq (q-1) \sum_{j\geq1}\dim W_j
\end{equation}
for every $t\in \mathbb{Z}_p$ if ${\operatorname{ord}}=\operatorname{ord}_\infty$ and $t \in {\mathbb{Z}_{d,\,p}}$ if ${\operatorname{ord}}=\operatorname{ord}_v$.
\end{lemma}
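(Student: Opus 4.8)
The plan is to reduce to an integer exponent and then expand the whole sum by the multinomial theorem, exploiting the vanishing of power sums over $\mathbb{F}_q$. The point is that summing a monomial $w \mapsto \prod_i(c_ie_i)^{k_i}$ over all of $W$ forces each $k_i$ to be a positive multiple of $q-1$, which is exactly what produces the factor $q-1$ in the bound.

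First I would reduce to the case $t = m \in \mathbb{Z}_{\geq 0}$. Since $\operatorname{ord}(w_N + w) > 0$, each base $1 + w_N + w$ is a $1$-unit, so $t \mapsto (1+w_N+w)^t$ is continuous (and for a $1$-unit the power depends only on the $\mathbb{Z}_p$-component of $t$, so the two cases $\operatorname{ord}_\infty$, $\operatorname{ord}_v$ are handled uniformly). As $\{y \in L : \operatorname{ord}(y) \geq B\}$ is closed and $\mathbb{Z}_{\geq 0}$ is dense in $\mathbb{Z}_p$ (resp.\ $\mathbb{Z}_{d,p}$), it suffices to establish the estimate for every non-negative integer $m$.

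Next I would choose an $\mathbb{F}_q$-basis $e_1,\dots,e_n$ of $W$ adapted to the filtration $\{W_j\}$: picking at each jump of the filtration elements projecting to a basis of $W_j/W_{j+1}$ yields $\operatorname{ord}(e_i) = d_i$ with $\sum_i d_i = \sum_{j\geq1}\dim W_j$ (summation by parts, using $\sum_j j(\dim W_j - \dim W_{j+1}) = \sum_j \dim W_j$). Writing $w = \sum_i c_i e_i$ with $c_i \in \mathbb{F}_q$ and applying the multinomial theorem gives
\[
\sum_{w \in W}(1 + w_N + w)^m = \sum_{k_0 + \cdots + k_n = m}\binom{m}{k_0,\dots,k_n}(1 + w_N)^{k_0}\prod_{i=1}^n e_i^{k_i}\prod_{i=1}^n\Bigl(\sum_{c \in \mathbb{F}_q}c^{k_i}\Bigr).
\]
Since $\sum_{c \in \mathbb{F}_q}c^{k}$ vanishes unless $k \geq 1$ and $(q-1)\mid k$, only terms with every $k_i \geq q-1$ survive. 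For such a term the multinomial coefficient lies in $\mathbb{F}_p$ and hence has order $\geq 0$, the factor $(1+w_N)^{k_0}$ has order $0$ as a power of a $1$-unit, and $\operatorname{ord}\bigl(\prod_i e_i^{k_i}\bigr) = \sum_i k_i d_i \geq (q-1)\sum_i d_i$ because $k_i \geq q-1$ and $d_i > 0$. The ultrametric inequality then gives $\operatorname{ord}\bigl(\sum_w (1+w_N+w)^m\bigr) \geq (q-1)\sum_i d_i = (q-1)\sum_{j\geq1}\dim W_j$.

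The combinatorial heart, namely the vanishing of $\sum_{c \in \mathbb{F}_q}c^k$, is routine, so I expect the two delicate points to be the following. The first is justifying the reduction to integer exponents simultaneously for $\operatorname{ord}_\infty$ and $\operatorname{ord}_v$, in particular that for a $1$-unit the power is insensitive to the $\mathbb{Z}/(q^d-1)$-component of $t$. The second, and the main obstacle for obtaining the \emph{sharp} constant, is the choice of an adapted basis realizing $\sum_i \operatorname{ord}(e_i) = \sum_{j\geq1}\dim W_j$ exactly; an arbitrary basis can make $\sum_i \operatorname{ord}(e_i)$ strictly smaller and would only yield a weaker lower bound.
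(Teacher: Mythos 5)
Your proof is correct, and its skeleton — reduction to non-negative integer exponents via density of $\mathbb{Z}_{\geq 0}$ and closedness of $\{a \mid \operatorname{ord}(a)\geq B\}$, followed by an expansion and the ultrametric inequality — matches the paper's. The genuine difference is at the key estimate. The paper expands $(1+w_N+w)^t$ by two successive binomial expansions (first in $w_N$, then in $w$), reduces everything to the power sums $\sum_{w\in W}w^l$, and then simply \emph{cites} Lemma 1.2 of \cite{Goss1983} for the bound $\operatorname{ord}\bigl(\sum_{w\in W}w^l\bigr)\geq(q-1)\sum_{j\geq1}\dim W_j$. You instead inline the content of that cited lemma: you choose a basis adapted to the filtration $\{W_j\}$, perform a single multinomial expansion of the whole sum, and extract the factor $q-1$ from the vanishing of $\sum_{c\in\mathbb{F}_q}c^{k}$ unless $k\geq1$ and $(q-1)\mid k$, so that every surviving monomial has each $k_i\geq q-1$. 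What your route buys is self-containedness — the combinatorial heart that the paper delegates to a citation is proved on the page — at the cost of the bookkeeping of the adapted basis; the paper's route is shorter but is not a complete argument without Goss's lemma. One minor refinement: if the prolonged valuation on $L$ takes non-integer values, elements picked at the $j$-th jump of the filtration have $\operatorname{ord}\in[j,\,j+1)$, so your adapted basis gives $\sum_i d_i\geq\sum_i\lfloor d_i\rfloor=\sum_{j\geq1}\dim W_j$ rather than the exact equality you assert; since your final estimate only uses the inequality $\sum_i k_i d_i\geq(q-1)\sum_i d_i\geq(q-1)\sum_{j\geq1}\dim W_j$, this does not affect the validity of the proof.
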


\begin{proof}
We reproduce the proof of this lemma which can be observed in \cite[Corollary 1.3 and the proof of Theorem 2.2.3]{Goss1983}.
As the set $\{ a \in k_{\operatorname{ord}} \mid {\operatorname{ord}}(a) \geq m\}$ is closed for any $m\in\mathbb{Z}$, it is enough to establish the lemma with $t \in \mathbb{N}$. We have 
\begin{align}
    {\operatorname{ord}}\left(\sum_{w \in W}(1+w_N+w)^t\right)&={\operatorname{ord}}\left\{\sum_{m=0}^t\dbinom{t}{m} w_N^{t-m}\middle(\sum_{w\in W}(1+w)^m\middle)\right\}\\
    &={\operatorname{ord}}\left\{\sum_{m=0}^t  \dbinom{t}{m} w_N^{t-m} \sum_{l=0}^m \dbinom{m}{l} \middle(\sum_{w\in W}w^l\middle)\right\}\\
    &\geq \min_{0\leq l \leq m\leq t} {\operatorname{ord}}\left\{ \dbinom{t}{m} \dbinom{m}{l} w_N^{t-m}\sum_{w \in W} w^l \right\} \\
    &\geq (q-1) \sum_{j\geq1}\dim W_j.
\end{align}
Here, the last inequality follows from \cite[Lemma 1.2]{Goss1983}.
\end{proof}

The coefficient of $\sigma_1^{-i_1}\cdots\sigma_r^{-i_r}$ in the series \eqref{EqDefMZF} and \eqref{EqDefMZSF} is
\begin{equation}
\left(\sum_{\substack{\deg n=i_1\\n \in A_+}}(\theta^{-i_1}n)^{-t_1}\right)\cdots\left(\sum_{\substack{\deg n=i_r\\n \in A_+}}(\theta^{-i_r}n)^{-t_r}\right). \label{coef}
\end{equation}
As the negative of order
\begin{equation}
     -\operatorname{ord}_\infty \left (\sigma_1^{i_1}\cdots\sigma_r^{i_r} \right )
\end{equation}
increases at most linearly as each $i_j$ tends to $\infty$ for fixed $\sigma_1\,\dots,\,\sigma_r$, it is enough to show that the order of the coefficients \eqref{coef} grows quadratically as each $i_j$ tends to $\infty$ ($1\leq j \leq r$) in order to confirm the convergence of the sums  \eqref{EqDefMZF} and \eqref{EqDefMZSF} in Definition \ref{DefMZF}. 

\begin{prop}\label{Appinfty}
The order
\begin{equation}
\operatorname{ord}_\infty \left(\sum_{\substack{\deg n=i\\n \in A_+}}(\theta^{-i}n)^{-t}\right),\quad (i\in \mathbb{N},\,t \in {\mathbb{Z}_{p}})
\end{equation}
grows quadratically as $i$ tends to $\infty$.
\end{prop}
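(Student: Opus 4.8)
The plan is to recognize the inner sum as a sum of the shape treated by Lemma \ref{lemmaestimationW} and then carry out the dimension count explicitly. First I would write a monic $n \in A_+$ of degree $i$ as $n = \theta^i + c_{i-1}\theta^{i-1} + \cdots + c_0$ with $c_0,\dots,c_{i-1} \in \mathbb{F}_q$, so that $\theta^{-i}n = 1 + \sum_{l=1}^{i} c_{i-l}\theta^{-l}$ is a one-unit in $k_\infty$. As $n$ ranges over all monic polynomials of degree $i$, the element $w \coloneqq \theta^{-i}n - 1$ ranges bijectively over the $\mathbb{F}_q$-vector space
\[
W = \Bigl\{\, \sum_{l=1}^{i} a_l\,\theta^{-l} \;\Bigm|\; a_l \in \mathbb{F}_q \,\Bigr\} \subset k_\infty,
\]
which has dimension $i$ and satisfies $\operatorname{ord}_\infty(w) \geq 1 > 0$ for every nonzero $w \in W$. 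Hence the inner sum equals $\sum_{w \in W}(1+w)^{-t}$.

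Next I would apply Lemma \ref{lemmaestimationW} with $\operatorname{ord} = \operatorname{ord}_\infty$, with this $W$, with $w_N = 0$ (which satisfies $\operatorname{ord}_\infty(w_N) > 0$), and with the exponent $-t \in \mathbb{Z}_p$ in place of $t$. This immediately gives the lower bound
\[
\operatorname{ord}_\infty\!\left(\sum_{\substack{\deg n = i\\ n \in A_+}} (\theta^{-i}n)^{-t}\right) \;\geq\; (q-1)\sum_{j \geq 1}\dim W_j.
\]

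It then remains to evaluate the dimension sum. For $1 \leq j \leq i$ the subspace $W_j = \{ w \in W \mid \operatorname{ord}_\infty(w) \geq j \}$ consists of those $\sum_l a_l \theta^{-l}$ with $a_1 = \cdots = a_{j-1} = 0$, so $\dim W_j = i - j + 1$, while $W_j = 0$ for $j > i$. Therefore $\sum_{j\geq 1}\dim W_j = \sum_{j=1}^{i}(i-j+1) = i(i+1)/2$, and the bound becomes $(q-1)\,i(i+1)/2$, which grows quadratically in $i$, as required.

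The argument is essentially a bookkeeping exercise once Lemma \ref{lemmaestimationW} is available, so there is no serious obstacle. The only points requiring care are (i) checking that the exponent may be taken to be $-t$ rather than $t$, which is harmless since $\mathbb{Z}_p$ is closed under negation, and (ii) making the identification of $\{\theta^{-i}n \mid n \in A_+,\ \deg n = i\}$ with $1 + W$ precise, including the verification that every nonzero element of $W$ has strictly positive $\infty$-adic order so that the hypotheses of the lemma are genuinely met.
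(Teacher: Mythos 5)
Your proposal is correct and follows essentially the same route as the paper's own proof: identify the sum as $\sum_{w\in W}(1+w)^{-t}$ with $W=\langle \theta^{-1},\dots,\theta^{-i}\rangle_{\mathbb{F}_q}$, apply Lemma \ref{lemmaestimationW}, and compute $\dim W_j=i-j+1$ to obtain the bound $(q-1)\,i(i+1)/2$. The only differences are presentational (you make the bijection $n\mapsto \theta^{-i}n-1$ and the choice $w_N=0$ explicit, which the paper leaves implicit).
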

\begin{proof}
By using Lemma \ref{lemmaestimationW} with $\operatorname{ord}=\operatorname{ord}_\infty$ and $W\coloneqq  \langle \theta^{-1},\,\dots,\,\theta^{-i} \rangle_{\mathbb{F}_q} \subset k_\infty$, we have, for $i \in \mathbb{N}$ and $t \in \mathbb{Z}_p$,
\begin{equation}
    \operatorname{ord}_\infty \left(\sum_{\substack{\deg n=i\\n \in A_+}}(\theta^{-i}n)^{-t}\right)=\operatorname{ord}_\infty \left(\sum_{w \in W}(1+w)^{-t}\right)\geq (q-1)\sum_{j\geq1} \dim W_j.
\end{equation}
Since
\begin{equation}
    W_j=\begin{cases}
    W &j=0,\\
    \langle \theta^{-j},\dots,\,\theta^{-i} \rangle_{\mathbb{F}_q} & 1\leq j \leq i,\\
    0 &j >i,
    \end{cases} 
\end{equation}
we have
\begin{equation}
    \operatorname{ord}_\infty \left ( \sum_{\substack{\deg n=i\\n \in A_+}}(\theta^{-i}n)^t \right ) \geq\frac{i(i+1)}{2}.
\end{equation}
\end{proof}
Consequently, we have the following result:
\begin{theorem}[{\cite[Corollary 6.1]{Angles2016}}]
The sums \eqref{EqDefMZF} and \eqref{EqDefMZSF} in Definition \ref{DefMZF} converge. Therefore, for each $t_1,\,\dots,\,t_r \in X_\infty$, the functions
\begin{equation}
    \zeta_{\infty}((\sigma_1,\,t_1),\,\dots,\,(\sigma_r,\,t_r)) \text{ and } \zeta_{\infty}^\star((\sigma_1,\,t_1),\,\dots,\,(\sigma_r,\,t_r))
\end{equation}
are rigid analytic, that is, they are represented by power series in $\sigma_1,\,\dots,\,\sigma_r$ of infinite radii of convergence.
\end{theorem}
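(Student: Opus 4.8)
The plan is to combine the single-factor estimate of Proposition \ref{Appinfty} with the fact that $\mathbb{C}_\infty$ is complete and non-archimedean, so that convergence of a countable sum is equivalent to its general term tending to $0$. Fix $s_j=(\sigma_j,t_j)\in X_\infty$ for $1\leq j\leq r$. Recalling that $S_{i_j}(s_j)=\sigma_j^{-i_j}\big(\sum_{\deg n=i_j,\,n\in A_+}(\theta^{-i_j}n)^{-t_j}\big)$, the general term of \eqref{EqDefMZF} and \eqref{EqDefMZSF} factors as the product of $\sigma_1^{-i_1}\cdots\sigma_r^{-i_r}$ with the coefficient \eqref{coef}. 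First I would bound the order of this term from below. By Proposition \ref{Appinfty} each factor of \eqref{coef} has $\operatorname{ord}_\infty$ at least $i_j(i_j+1)/2$, so by additivity of $\operatorname{ord}_\infty$ over products the order of \eqref{coef} is at least $\sum_{j=1}^r i_j(i_j+1)/2$; adding the contribution $-\sum_{j=1}^r i_j\operatorname{ord}_\infty(\sigma_j)$ coming from $\sigma_1^{-i_1}\cdots\sigma_r^{-i_r}$ yields
\[
\operatorname{ord}_\infty\!\left(S_{i_1}(s_1)\cdots S_{i_r}(s_r)\right)\ \geq\ \sum_{j=1}^{r}\left(\frac{i_j(i_j+1)}{2}-i_j\,\operatorname{ord}_\infty(\sigma_j)\right).
\]

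Second I would observe that, for fixed $\sigma_1,\dots,\sigma_r$, the right-hand side tends to $+\infty$ as $\max_j i_j\to\infty$, because in each coordinate the quadratic term dominates the linear one regardless of the sign of $\operatorname{ord}_\infty(\sigma_j)$. Hence for every bound $M$ only finitely many multi-indices $(i_1,\dots,i_r)$ yield a term of order $\leq M$; this is exactly the non-archimedean criterion for the unconditional convergence of a countable sum, so both \eqref{EqDefMZF} and \eqref{EqDefMZSF} converge in $\mathbb{C}_\infty$. The star and non-star cases are identical here: they differ only in whether the summation runs over $i_1>\cdots>i_r\geq 0$ or $i_1\geq\cdots\geq i_r\geq 0$, and the estimate above is applied termwise on whichever index set occurs, so it applies verbatim to both.

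For the rigid analyticity I would reinterpret the same estimate as a statement about the multivariable power series in $\sigma_1^{-1},\dots,\sigma_r^{-1}$ whose coefficient of $\sigma_1^{-i_1}\cdots\sigma_r^{-i_r}$ is \eqref{coef} (and is $0$ outside the relevant simplex). Since the $\operatorname{ord}_\infty$ of that coefficient is at least $\sum_{j=1}^r i_j(i_j+1)/2$, we get $\operatorname{ord}_\infty(\text{coefficient})/(i_1+\cdots+i_r)\to\infty$, which is precisely the condition for an infinite radius of convergence; thus the functions are represented by everywhere-convergent power series and are rigid analytic in the $\sigma_j$. The main obstacle is already resolved by Proposition \ref{Appinfty}: the genuinely quadratic lower bound on the order of a single inner sum is what forces the quadratic term to dominate the unavoidable linear contribution of the $\sigma_j^{-i_j}$. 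Once that bound is in hand, the remaining steps are the routine bookkeeping of non-archimedean convergence and the standard translation between coefficient growth and radius of convergence.
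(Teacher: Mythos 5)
Your proposal is correct and follows essentially the same route as the paper: the appendix reduces everything to the quadratic lower bound of Proposition \ref{Appinfty} (itself from Lemma \ref{lemmaestimationW}), notes that $-\operatorname{ord}_\infty(\sigma_1^{i_1}\cdots\sigma_r^{i_r})$ grows only linearly in the $i_j$, and concludes convergence and infinite radius of convergence exactly as you do. Your write-up merely makes explicit the non-archimedean bookkeeping (finitely many terms of bounded order, coefficient growth versus radius) that the paper leaves implicit in its ``Consequently.''
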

This was shown in \cite[Corollary 6.1]{Angles2016} in more generalized setting.

Similar to the $\infty$-adic case, in order to show that the sum \eqref{EqDef$v$MZF} and \eqref{EqDefvMZSF} converge, it is enough to confirm the quadratic growth of the valuations:
\begin{equation}
    \operatorname{ord}_v\left(\tilde{S}_{i_1}(t_1)\right),\,\dots, \,\operatorname{ord}_v \left(\tilde{S}_{i_r}(t_r)\right) \quad \  (i_1,\,\dots,\,i_r \to \infty).
\end{equation}

\begin{prop}\label{Lemmaestimation}
The valuation
\begin{equation}
    \operatorname{ord}_v\left(\tilde{S}_i(t)\right) \quad (i\in \mathbb{N},\,t \in {\mathbb{Z}_{d,\,p}})
\end{equation}
increase quadratically as $i$ tends to $\infty$.
\end{prop}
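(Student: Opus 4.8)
The plan is to mirror the proof of Proposition \ref{Appinfty}, reducing everything to a single application of Lemma \ref{lemmaestimationW} with $\operatorname{ord}=\operatorname{ord}_v$. The only genuinely new ingredient is a decomposition of the set of summands that simultaneously accommodates the coprimality condition $(n,v)=1$ and exposes a finite-dimensional $\mathbb{F}_q$-subspace of $k_v$ whose $v$-adic filtration has linearly decaying dimensions.

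First I would treat $i\geq d$ (only large $i$ matters) and split the monic polynomials of degree $i$ coprime to $v$ according to their residue modulo $v$. For each nonzero residue $c\in(A/v)^\times$ fix a monic representative $n_c$ of degree $i$ with $n_c\equiv c \pmod v$; then every monic $n$ of degree $i$ in the class $c$ is uniquely $n=n_c+vh$ with $\deg h<i-d$, and such $n$ is automatically coprime to $v$. This gives
\begin{equation}
\tilde{S}_i(t)=\sum_{c\in(A/v)^\times}\ \sum_{\deg h<i-d}(n_c+vh)^{-t}.
\end{equation}
Since $n_c$ is a unit in $A_v$ and $vn_c^{-1}h\in\mathfrak{m}_v$, multiplicativity of the $v$-adic power yields $(n_c+vh)^{-t}=n_c^{-t}(1+vn_c^{-1}h)^{-t}$ with $\operatorname{ord}_v(n_c^{-t})=0$. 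Setting $W_c\coloneqq vn_c^{-1}\langle 1,\theta,\dots,\theta^{i-d-1}\rangle_{\mathbb{F}_q}$, a finite-dimensional $\mathbb{F}_q$-subspace of $k_v$ all of whose elements have positive $v$-order, the inner sum becomes $\sum_{w\in W_c}(1+w)^{-t}$, to which Lemma \ref{lemmaestimationW} (with $w_N=0$) applies, giving by ultrametricity
\begin{equation}
\operatorname{ord}_v\bigl(\tilde{S}_i(t)\bigr)\geq\min_{c}\operatorname{ord}_v\Bigl(\sum_{w\in W_c}(1+w)^{-t}\Bigr)\geq(q-1)\min_c\sum_{j\geq1}\dim(W_c)_j.
\end{equation}

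The crucial step is then to verify that $\sum_{j\geq1}\dim(W_c)_j$ grows quadratically in $i$, uniformly in $c$ and $t$. The naive basis $\{\theta^l\}$ is useless here, since after multiplication by $v$ its elements all share essentially the same $v$-order and the filtration collapses. Instead I would expand polynomials of degree $<i-d$ in base $v$, using the basis $\{\theta^a v^b : 0\leq a<d,\ a+bd<i-d\}$. Because every $\theta^a$ with $a<d$ is coprime to $v$, one has $\operatorname{ord}_v(\theta^a v^b)=b$, so the induced basis of $W_c$ consists of elements of order exactly $b+1$; hence $\dim(W_c)_j$ counts the pairs $(a,b)$ with $b\geq j-1$, and interchanging the order of summation gives $\sum_{j\geq1}\dim(W_c)_j=\sum_{0\leq a<d,\,a+bd<i-d}(b+1)$, which is of size $\sim i^2/(2d)$. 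As this lower bound is independent of $c$ and $t$, substituting it into the previous display establishes the uniform quadratic growth asserted in Proposition \ref{Lemmaestimation}.

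I expect the main obstacle to be precisely this last combinatorial point: one must choose the parametrization of the summands so that the resulting subspace of $k_v$ carries a filtration whose dimensions decay linearly in $j$. The residue-class splitting $n=n_c+vh$ combined with the base-$v$ expansion is exactly what makes the $v$-adic orders spread out linearly; with the obvious degree-based parametrization the filtration degenerates and only a linear lower bound is obtained, which would be too weak to outrun the at-most-linear growth of $-\operatorname{ord}_v(\sigma_1^{i_1}\cdots\sigma_r^{i_r})$ and hence insufficient for the convergence of the multiple series \eqref{EqDef$v$MZF} and \eqref{EqDefvMZSF}.
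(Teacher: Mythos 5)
Your proof is correct and follows essentially the same route as the paper: the paper also splits $\tilde{S}_i(t)$ by residue classes modulo $v$, factors out the unit part (via the decomposition $n=\omega(n)\langle n\rangle_v$ and a centering at an extremal element $N$, where you use the cleaner parametrization $n=n_c+vh$ with $w_N=0$), and applies Lemma \ref{lemmaestimationW} to what is, up to a unit multiple, the same subspace $v\cdot\{h\in A \mid \deg h<i-d\}\subset k_v$. The filtration dimensions you extract from the base-$v$ adapted basis agree with the paper's direct count of monic polynomials congruent to $f$ modulo $v^j$, giving in both cases the uniform quadratic bound $(q-1)\sum_{j\geq 1}(i-jd)\sim i^2/(2d)$.
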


\begin{proof}
We reproduce the proof of this proposition which can be observed in \cite[Theorem 5.4.5]{Goss1983}. It is enough to estimate
\begin{equation}
    \operatorname{ord}_v\left(\sum_{n \in X_{i,\,f}}n^{-t}\right)
\end{equation}
where $X_{i,\,f}\coloneqq \{n \in A_+ \mid \deg n =i,\,n \equiv f \mod v\}$, for a fixed $f\in A$ with $0\leq \deg f< d$ and for large $i$. We can write $n \in X_{i,\,f}$ as $n=\omega(n)\langle n \rangle_v$ where $\omega(n)\equiv n \mod v$ and $\langle n \rangle_v$ is the $1$-unit. Then, we have $\omega(n)=\omega(f) \in A_v$. Let us put $w_n\coloneqq \langle n \rangle_v-1$ for $n \in X_{i,\,f}$ and fix $N \in X_{i,\,f}$ such that $\max_{n\in X_{i,\,f}}\operatorname{ord}_v(n-f)=\operatorname{ord}_v(N-f)$.  Then the sets $Y\coloneqq \{ N-n \mid n\in X_{i,\,f}\}$ and $W\coloneqq  \{w_N-w_n \mid n \in X_{i,\,f}\}$ are $\mathbb{F}_q$-linear subspace of $k_v$ and the map $m:(N-n) \mapsto -\omega(f)^{-1}(N-n)=(w_N-w_n)$ is an $\mathbb{F}_q$-linear isomorphism preserving valuation $\operatorname{ord}_v$ from $Y$ to $W$.
Indeed, as the set $Y$ is obviously the $\mathbb{F}_q$-linear subspace and the element $-\omega(f)^{-1}$ is invertible with $\operatorname{ord}_v(-\omega(f)^{-1})=1$, we can see that the set $W$ is also finite dimensional $\mathbb{F}_q$-linear subspace of $k_v$ and $m$ is a valuation preserving $\mathbb{F}_q$-linear isomorphism.

Putting $W_j\coloneqq \{w \in W \mid \operatorname{ord}_v \geq j \}$ and $Y_j\coloneqq \{N-n\mid n\in X_{i,\,f},\, \operatorname{ord}_v(N-n)\geq j\}$ as in Lemma \ref{lemmaestimationW}, we have

\begin{align}
     &\quad \, \operatorname{ord}_v\left(\sum_{n \in X_{i,\,f}}n^{-t}\right)=\operatorname{ord}_v\left(\omega(f)^{-t} \sum_{n \in X_{i,\,f}}(1+w_n )^{-t} \right)\\
     &=\operatorname{ord}_v\left(\omega(f)^{-t} \sum_{w \in W}(1+w_N+w)^{-t} \right)\\
     &\geq \operatorname{ord}_v\left(\omega(f)^{-t} \right)(q-1)\sum_{j\geq0}\dim W_j=v\left(\omega(f)^{-t} \right)(q-1)\sum_{j\geq1}\dim Y_j
\end{align}
by Lemma \ref{lemmaestimationW}.
Since $\max_{n\in X_{i,\,f}}\operatorname{ord}_v(n-f)=\operatorname{ord}_v(N-f)$, we obtain
\begin{align}
    \#Y_j&=\#\{n\in X_{i,\,f} \mid \operatorname{ord}_v(N-n) \geq j\}\\
    &=\#\{n\in X_{i,\,f} \mid \operatorname{ord}_v(n-f) \geq j\}\\
    &=\# \{v^j g+f \mid g\in A_+ ,\,\deg g = i-jd\}=q^{i-jd-1},
\end{align}
here, the second equality is obtained as follows:
\begin{equation}    
\operatorname{ord}_v(n-f)=\min\{\operatorname{ord}_v(N-f), \operatorname{ord}_v(n-f)\} = \operatorname{ord}_v(N-n).
\end{equation}
Thus, the inequality
\begin{equation}
    \operatorname{ord}_v\left(\sum_{n \in X_{i,\,f}}n^{-t}\right)
    \geq (q-1)\sum_{j=1}^{\left[\frac{i}{d}\right]}(i-jd-1).
\end{equation}
holds and the right hand side increase quadratically as $i$ tends to $\infty$, hence the proposition follows.
\end{proof}
Consequently, we have the following result, which was shown in \cite[Corollary 6.2]{Angles2016} in more generalized setting.
\begin{theorem}[{\cite[Corollary 6.2]{Angles2016}}]
The series \eqref{EqDef$v$MZF} and \eqref{EqDefvMZSF} in Definition \ref{Def$v$MZF} converge. Consequently, for each $t_1,\,\dots,\,t_r \in {\mathbb{Z}_{d,\,p}}$, the functions
\begin{equation}
    \zeta_v((\sigma_1,\,t_1),\,\dots,\,(\sigma_r,\,t_r)) \text{ and } \zeta_v^\star((\sigma_1,\,t_1),\,\dots,\,(\sigma_r,\,t_r))
\end{equation}
are rigid analytic in $\sigma_1^{-1},\,\dots,\,\sigma_r^{-1}$, that is, they are represented by power series  of infinite radii of convergence.
\end{theorem}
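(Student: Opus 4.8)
The plan is to run the $\infty$-adic argument recorded just above with Proposition~\ref{Appinfty} replaced by Proposition~\ref{Lemmaestimation}. First I would note that, because $\tilde{S}_{i_j}(s_j)=\sigma_j^{-i_j}\tilde{S}_{i_j}(t_j)$, the defining series \eqref{EqDef$v$MZF} and \eqref{EqDefvMZSF} are genuine power series in the variables $\sigma_1^{-1},\,\dots,\,\sigma_r^{-1}$: the coefficient of $\sigma_1^{-i_1}\cdots\sigma_r^{-i_r}$ is $\tilde{S}_{i_1}(t_1)\cdots\tilde{S}_{i_r}(t_r)$ for multi-indices satisfying $i_1>\cdots>i_r\geq0$ (respectively $i_1\geq\cdots\geq i_r\geq0$ in the star case), and is zero otherwise. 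Thus everything reduces to estimating the $\operatorname{ord}_v$ of these coefficients.

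Next I would combine the additivity of the valuation with Proposition~\ref{Lemmaestimation}. Writing $Q(i)$ for the explicit quadratic lower bound $(q-1)\sum_{j=1}^{[i/d]}(i-jd-1)$ on $\operatorname{ord}_v(\tilde{S}_i(t))$ produced there, for fixed $\sigma_1,\,\dots,\,\sigma_r\in k_v^\times$ the order of a single term of \eqref{EqDef$v$MZF} satisfies
\begin{equation*}
\sum_{j=1}^{r}i_j\operatorname{ord}_v(\sigma_j^{-1})+\sum_{j=1}^{r}\operatorname{ord}_v\bigl(\tilde{S}_{i_j}(t_j)\bigr)\ \geq\ \sum_{j=1}^{r}\Bigl(i_j\operatorname{ord}_v(\sigma_j^{-1})+Q(i_j)\Bigr).
\end{equation*}
The point of this rearrangement is that the lower bound decouples into $r$ independent one-variable expressions, each of the form \emph{linear plus quadratic} in a single $i_j$; since a quadratic dominates a linear term irrespective of the sign of $\operatorname{ord}_v(\sigma_j^{-1})$, each bracket tends to $+\infty$ as $i_j\to\infty$.

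Finally I would conclude convergence and analyticity. The decoupled bound shows that the order of a term tends to $+\infty$ as soon as any $i_j$ is large, so only finitely many terms have order below any prescribed value; as $k_v$ is complete and ultrametric, the sums \eqref{EqDef$v$MZF} and \eqref{EqDefvMZSF} converge. For rigid analyticity one repeats the estimate with $\operatorname{ord}_v(\sigma_j^{-1})$ replaced by an arbitrary constant $c$: the bracket $ci_j+Q(i_j)$ still tends to $+\infty$, which is precisely the assertion that the power series in $\sigma_1^{-1},\,\dots,\,\sigma_r^{-1}$ has infinite radii of convergence. The star version is identical save for the index inequalities. The genuine content is Proposition~\ref{Lemmaestimation}; the only thing to watch is that the quadratic growth must beat the linear contribution in all $r$ variables at once, and the displayed decoupling handles exactly this.
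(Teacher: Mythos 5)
Your proposal is correct and takes essentially the same route as the paper: both treat the series as power series in $\sigma_1^{-1},\dots,\sigma_r^{-1}$ whose coefficients are products $\tilde{S}_{i_1}(t_1)\cdots\tilde{S}_{i_r}(t_r)$, and reduce convergence and the infinite radius of convergence to the quadratic growth of $\operatorname{ord}_v\bigl(\tilde{S}_i(t)\bigr)$ from Proposition~\ref{Lemmaestimation}, which dominates the linear contribution of the $\sigma_j$'s. Your explicit decoupling into $r$ one-variable ``linear plus quadratic'' brackets merely spells out the step the paper leaves implicit.
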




\begin{thebibliography}{10}

\bibitem{Anderson1990}
{\sc G.~W. Anderson and D.~S. Thakur}, {\em Tensor powers of the {C}arlitz
  module and zeta values}, Ann. of Math. (2), 132 (1990), pp.~159--191.

\bibitem{Angles2020a}
{\sc B.~Angl\`es, A.~Bandini, F.~Bars, and I.~Longhi}, {\em Iwasawa main
  conjecture for the {C}arlitz cyclotomic extension and applications}, Math.
  Ann., 376 (2020), pp.~475--523.

\bibitem{Angles2016}
{\sc B.~Angl\`es, T.~Ngo~Dac, and F.~Tavares~Ribeiro}, {\em Twisted
  characteristic {$p$} zeta functions}, J. Number Theory, 168 (2016),
  pp.~180--214.

\bibitem{Carlitz1935}
{\sc L.~Carlitz}, {\em On certain functions connected with polynomials in a
  {G}alois field}, Duke Math. J., 1 (1935), pp.~137--168.

\bibitem{Chang2014}
{\sc C.-Y. Chang}, {\em Linear independence of monomials of multizeta values in
  positive characteristic}, Compos. Math., 150 (2014), pp.~1789--1808.

\bibitem{Chang2019a}
{\sc C.-Y. Chang and Y.~Mishiba}, {\em On multiple polylogarithms in
  characteristic {$p$}: {$v$}-adic vanishing versus {$\infty$}-adic
  {E}ulerianness}, Int. Math. Res. Not. IMRN,  (2019), pp.~923--947.

\bibitem{Chang2021}
\leavevmode\vrule height 2pt depth -1.6pt width 23pt, {\em On a conjecture of
  {F}urusho over function fields}, Invent. Math., 223 (2021), pp.~49--102.

\bibitem{Furusho2017b}
{\sc H.~Furusho, Y.~Komori, K.~Matsumoto, and H.~Tsumura}, {\em Fundamentals of
  {$p$}-adic multiple {$L$}-functions and evaluation of their special values},
  Selecta Math. (N.S.), 23 (2017), pp.~39--100.

\bibitem{Goss1979}
{\sc D.~Goss}, {\em {$v$}-adic zeta functions, {$L$}-series and measures for
  function fields}, Invent. Math., 55 (1979), pp.~107--119.
\newblock With an addendum.

\bibitem{Goss1983}
\leavevmode\vrule height 2pt depth -1.6pt width 23pt, {\em On a new type of
  {$L$}-function for algebraic curves over finite fields}, Pacific J. Math.,
  105 (1983), pp.~143--181.

\bibitem{LangBook}
{\sc S.~Lang}, {\em Cyclotomic fields {I} and {II}}, vol.~121 of Graduate Texts
  in Mathematics, Springer-Verlag, New York, second~ed., 1990.
\newblock With an appendix by Karl Rubin.

\bibitem{Shenthesis}
{\sc Q.~Shen}, {\em $v$-adic multiple zeta values over function fields}, PhD
  thesis, University of Rochester, 2019.

\bibitem{ThakurBook}
{\sc D.~S. Thakur}, {\em Function field arithmetic}, World Scientific
  Publishing Co., Inc., River Edge, NJ, 2004.

\bibitem{WashingtonBook}
{\sc L.~C. Washington}, {\em Introduction to cyclotomic fields}, vol.~83 of
  Graduate Texts in Mathematics, Springer-Verlag, New York, second~ed., 1997.

\bibitem{Zlobin2005}
{\sc S.~A. Zlobin}, {\em Generating functions for the values of a multiple zeta
  function}, Vestnik Moskov. Univ. Ser. I Mat. Mekh.,  (2005), pp.~55--59, 73.

\end{thebibliography}

\end{document}